\newtheorem{theorem}{Theorem}{\bfseries}{\itshape}
\newtheorem{lemma}[theorem]{Lemma}{\bfseries}{\itshape}
\newtheorem{corollary}[theorem]{Corollary}{\bfseries}{\itshape}
\newtheorem{claim}{Claim}{\bfseries}{\itshape}
\newtheorem{remark}[theorem]{Remark}{\bfseries}{\itshape}
\newtheorem{observation}[theorem]{Observation}{\bfseries}{\itshape}
\newtheorem{proposition}[theorem]{Proposition}{\bfseries}{\itshape}
\newenvironment{proofclaim}{\noindent { \emph{Proof}.}}  { $\blacksquare$ \smallskip}
{\it}{}
\newtheorem{myclaim}[theorem]{Claim}{\bfseries}{\itshape}
\newcommand{\bb}{\mathbb}
\newcommand{\R}{\bb R}
\newcommand{\Z}{\bb Z}
\newcommand{\N}{\bb N}
\newcommand{\floor}[1]{\left\lfloor#1\right\rfloor}
\newcommand{\ceil}[1]{\left\lceil#1\right\rceil}
\newcommand{\conv}{\operatorname{conv}}
\newcommand{\cone}{\operatorname{cone}}
\renewcommand{\int}{\operatorname{int}}
\newcommand{\dist}{\operatorname{dist}}
\newcommand{\relint}{\operatorname{relint}}
\newcommand{\lin}{\operatorname{lin}}
\newcommand{\aff}{\operatorname{aff}}
\newcommand{\sm}{\setminus}
\newcommand{\relbd}{\operatorname{relbd}}
\newcommand{\rec}{\operatorname{rec}}
\newcommand{\clconv}{\operatorname{\overline{conv}}}
\newcommand{\ang}[1]{\langle #1 \rangle}
\title{Reverse split rank}
\title{Reverse split rank\thanks{This work was supported by the {\em Progetto di Eccellenza 2008--2009} of {\em Fondazione Cassa di Risparmio di Padova e Rovigo}.}}
\author{Michele Conforti\thanks{Dipartimento di Matematica, Universit\`a degli Studi di Padova, Italy.} \and Alberto Del Pia\thanks{Department of Industrial and Systems Engineering $\&$ Wisconsin Institute for Discovery, University of Wisconsin-Madison, United States, {\tt delpia@wisc.edu}.} \\ \and \mbox{Marco Di Summa}\thanks{Dipartimento di Matematica, Universit\`a degli Studi di Padova, Italy.} \and Yuri Faenza\thanks{DISOPT, Institut de math\'ematiques d'analyse et applications, EPFL, Switzerland. }}
\begin{document}

\date{}
\maketitle

\begin{abstract} The {\em reverse split rank} of an integral polyhedron $P$ is defined as the supremum of the split ranks of all rational polyhedra whose integer hull is $P$.
Already in $\R^3$ there exist polyhedra with infinite reverse split rank. We give a geometric characterization of the integral polyhedra in $\R^n$ with infinite reverse split rank.

\smallskip

\noindent {\bf Keywords}: Integer programming, Cutting planes, Split inequalities, Split rank, Integer hull.
\end{abstract}

\section{Introduction}\label{sec:intro}

The problem of finding or approximating the convex hull of the integer points in a rational polyhedron is crucial in Integer Programming (see, e.g., \cite{NemWol,sch}). In this paper we consider one of the most well-known procedures used for this purpose: the split inequalities.

Given an integral polyhedron $P\subseteq\R^n$, a {\em relaxation} of $P$ is a rational polyhedron $Q\subseteq\R^n$ such that $P\cap\Z^n=Q\cap\Z^n$, i.e., $\conv(Q\cap\Z^n)=P$, where ``conv'' denotes the convex hull operator.
A {\em split} $S\subseteq\R^n$ is a set of the form $S=\{x\in\R^n:\beta\le ax\le\beta+1\}$ for some integer number $\beta$ and some primitive vector $a\in\Z^n$ (i.e., an integer vector whose entries have greatest common divisor equal to 1). Note that a split does not contain any integer point in its interior $\int S$. Therefore, if $Q$ is a rational polyhedron and $S$ is a split, then the set $\conv(Q\setminus\int S)$ contains the same integer points as $Q$. The {\em split closure} $SC(Q)$ of $Q$ is defined as
\[SC(Q)=\bigcap_{S \mbox{ split}}\conv(Q\setminus\int S).\]
As shown in \cite{CKS}, if $Q$ is a rational polyhedron, its split closure $SC(Q)$ is a rational polyhedron, and it clearly contains the same integer points as $Q$. For $k\in\N$, the {\em $k$-th split closure} of $Q$ is $SC^k(Q)=SC(SC^{k-1}(Q))$, with $SC^0(Q)=Q$. If $Q$ is a rational polyhedron, then there is an integer $k$ such that $SC^k(Q)=\conv(Q\cap\Z^n)$ (see \cite{CKS}); the minimum $k$ for which this happens is called the {\em split rank} of $Q$, and we denote it by $s(Q)$.

While one can verify that the split rank of all rational polyhedra in $\R^2$ is bounded by a constant, there is no bound for the split rank of all rational polyhedra in $\R^3$. Furthermore, even if the set of integer points in $Q$ is fixed, there might be no constant bounding the split rank of $Q$.
For instance,
let $P\subseteq\R^3$ be the convex hull of the points $(0,0,0)$, $(2,0,0)$ and $(0,2,0)$.
For every $t\ge0$, the polyhedron $Q_t=\conv(P,(1/2,1/2,t))$ is a relaxation of $P$. As shown in~\cite{ipco-journal}, $s(Q_t)\to+\infty$ as $t\to+\infty$.

In this paper we aim at understanding which polyhedra admit relaxations with arbitrarily high split rank. For this purpose,
given an integral polyhedron $P$, we define the {\em reverse split rank} of $P$, denoted $s^*(P)$, as the supremum of the split ranks of all relaxations of $P$:
\[s^*(P)=\sup\{s(Q):\mbox{$Q$ is a relaxation of $P$}\}.\]
For instance, the polyhedron $P$ given in the above example satisfies $s^*(P)=+\infty$.

In order to state our main result, given a subset $K\subseteq\R^n$, we denote by $\int K$ its interior and by $\relint K$ its relative interior. We say that $K$ is (relatively) lattice-free if there are no integer points in its (relative) interior. We denote by $\lin P$ the lineality space of a polyhedron $P$. Furthermore, given to sets $A,B\subseteq\R^n$, $A+B$ denotes the Minkowski sum of $A$ and $B$, defined by $A+B=\{a+b:a\in A,\,b\in B\}$.
(See, e.g., \cite{rockafeller,sch}.)



\begin{theorem}\label{th:main}
Let $P\subseteq\R^n$ be an integral polyhedron. Then $s^*(P)=+\infty$ if and only if there exist a nonempty face $F$ of $P$ and a rational linear subspace $L\not\subseteq\lin P$ such that
\begin{enumerate}[\upshape(i)]
\item
$\relint(F+L)$ is not contained in the interior of any split,
\item
$G+L$ is relatively lattice-free for every face $G$ of $P$ that contains $F$.
\end{enumerate}
\end{theorem}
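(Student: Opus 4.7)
The plan is to prove the two directions separately, since the statement is an if-and-only-if.

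\textbf{Sufficiency.} Assume $F$ and $L$ satisfy conditions (i) and (ii). I would construct an explicit family of relaxations $Q_t$ of $P$ with $s(Q_t)\to+\infty$, generalizing the three-dimensional needle example of the introduction. Pick a fractional point $p$ in $\relint F$ and a primitive rational vector $w$ in a direction of $L$ not contained in $\lin P$. Roughly, set $Q_t = \conv(P\cup T_t)$, where $T_t$ is a small $(\dim L)$-dimensional simplex placed at $p+tw$ whose edge directions span $L$ modulo $\lin P$. Condition (ii) is used to verify $Q_t \cap \Z^n = P \cap \Z^n$: any integer point of $Q_t\setminus P$ would lie in $\relint(G+L)$ for some face $G$ of $P$ containing $F$, contradicting that $G+L$ is relatively lattice-free. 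Condition (i) drives the split-rank lower bound: for any single split $S$, the set $Q_t\setminus\int S$ still reaches far from $P$ when $t$ is large, because $\int S$ cannot cover all of $\relint(F+L)$; a quantitative iteration of this ``distance from $P$'' argument, along the lines of~\cite{ipco-journal}, then yields $s(Q_t)\to+\infty$.

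\textbf{Necessity.} Suppose no pair $(F,L)$ satisfies (i) and (ii). I want to exhibit a finite constant $c=c(P)$ such that $s(Q)\le c$ for every relaxation $Q$ of $P$. The plan is an inductive reduction that strips off one ``protrusion'' of $Q\setminus P$ at a time. For a given relaxation $Q$, look at a vertex $v$ (or, more generally, a minimal face) of $Q$ not belonging to $P$: it is naturally attached to some face $F_v$ of $P$ and extends in a rational direction captured by a subspace $L_v \not\subseteq \lin P$. By hypothesis, at least one of (i) or (ii) fails for $(F_v, L_v)$. A failure of (ii) would produce an integer point in $\relint(G+L_v)$ for some face $G\supseteq F_v$; this point would lie in $Q\setminus P$, contradicting $Q\cap\Z^n=P\cap\Z^n$, so only (i) can fail. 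The failure of (i) provides a single split whose interior covers $\relint(F_v+L_v)$ and thus removes the protrusion in one step. A uniform bound on the number of such reduction steps, expressed in terms of $P$ alone, will give the bound on $s(Q)$.

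\textbf{Main obstacle.} The necessity direction is where the real work lies. The hypothesis that both (i) and (ii) fail for every candidate pair is non-local: (ii) quantifies over all faces containing $F$ and (i) is a geometric statement about a relatively open set. Turning a covering split, provided by the failure of (i), into a genuine rank reduction that decreases a well-chosen complexity measure of the relaxation — and then uniformly bounding the number of such reductions across all relaxations $Q$ of $P$ — is the subtle point. In particular, the inductive invariant has to be chosen with care: one may need to track not only the dimension of $\aff Q$ but also a notion of ``protrusion directions'' relative to $P$, and to allow passage to faces of $P$ and to quotients modulo $\lin P$ in order for the induction to close.
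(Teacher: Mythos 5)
Your sufficiency sketch has the right flavor but glosses over two points that the paper handles explicitly. First, to show $Q_t\cap\Z^n=P\cap\Z^n$ you cannot assume that an alleged bad integer point $z\in Q_t\setminus P$ lands in $\relint(G+L)$ for a face $G$ that \emph{contains} $F$: the minimal face $G$ of $P$ with $z\in\relint(G+L)$ might well be one that does \emph{not} contain $F$, and then (ii) gives no information. The paper's Claim~\ref{claim:relax} handles this second case by a separate positional argument (if $G\not\supseteq F$, then the added extreme points $\bar x\pm\lambda v^i$ lie strictly on the wrong side of the defining inequality for $G$, which forces $z\in P$, a contradiction). Second, the paper uses the symmetric construction $\bar x\pm\lambda v^i$ precisely so that, case by case over splits $S$, at least half the ``height'' survives one round of the split closure; a one-sided needle $T_t$ at $p+tw$ makes that analysis more fragile, because a single split might shave off nearly all of the protrusion. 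The quantitative surviving-fraction $\frac{r}{2(r+R)}$ in the paper is the engine that makes the iteration converge to $s(Q_t)\to\infty$, and you would need an analogue of it.

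The necessity direction, however, has a genuine gap. You argue by contraposition: if no pair $(F_v,L_v)$ satisfies both (i) and (ii), you say the failure of (ii) ``would produce an integer point in $\relint(G+L_v)$, and this point would lie in $Q\setminus P$.'' That last inference is false: conditions (i)--(ii) constrain only $P$, $F$ and $L$, so the witness integer point in $\relint(G+L_v)$ is located somewhere in the (typically unbounded) cylinder $G+L_v$ and need not lie anywhere near $Q$. Hence failure of (ii) gives no contradiction, and the dichotomy you rely on (``so only (i) can fail'') collapses. Even in the case where (i) genuinely fails, the split you obtain covers $\relint(F_v+L_v)$, not the specific vertex $v$ of $Q$ (which may lie outside $F_v+L_v$ or on its relative boundary), and applying it once does not certify that $s(Q)$ drops by one. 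The paper does not argue by contraposition at all: it assumes $s^*(P)=+\infty$, picks a sequence of relaxations $Q_i$ with $s(Q_i)\to\infty$, and \emph{constructs} $L$ as a limit of escaping directions using sequential compactness of the sphere. This is then combined with two substantive number-geometric lemmas (Lemmas~\ref{lem:integer-point} and~\ref{lem:very-far-things}, about integer points near arbitrary linear subspaces) to derive a contradiction from the presumed existence of an integer point in $\int(P+L_k)$. The proof also needs a ``fattening'' step for non-full-dimensional polytopes and a separate reduction for unbounded polyhedra via the recession cone. None of these ingredients appear in your proposal, and the reduction-step counting you hope for is not supplied; as you yourself note, the uniform bound over all relaxations $Q$ is the crux, but the machinery to obtain it is missing.
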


Note that for the polyhedron $P$ given in the example above, conditions (i)--(ii) are satisfied by taking $F=P$ and $L$ equal to the line generated by the vector $(0,0,1)$. We also remark that the condition $L\not\subseteq\lin P$ in the theorem implies in particular that $L\ne\{0\}$.
Furthermore, we observe that the dimension of any face $F$ as in the statement of the theorem is at least two.

The analogous concept of {\em reverse Chv\'atal--Gomory (CG) rank} of an integral polyhedron $P$ was introduced in \cite{ipco}.
We recall that an inequality $cx\le\floor{\delta}$ is a \emph{CG inequality} for a polyhedron $Q\subseteq \R^n$ if $c$ is an integer vector and $cx\le\delta$ is valid for $Q$. Alternatively, a CG inequality is a split inequality in which the split $S=\{x\in\R^n:\beta\le ax\le\beta+1\}$ is such that one of the half-spaces $\{x\in\R^n:ax\le\beta\}$ and $\{x\in\R^n:ax\ge\beta+1\}$ does not intersect $Q$.
The \emph{CG closure}, the \emph{CG rank} $r(Q)$, and the {\em reverse CG rank} $r^*(Q)$ of $Q$ are defined as for the split inequalities. The facts that the CG closure of a rational polyhedron is a rational polyhedron and that the CG rank of a rational polyhedron is finite were shown in \cite{sch80}.
In \cite{ipco} the following characterization was proved.

\begin{theorem}[\cite{ipco}]\label{th:CG}
Let $P\subseteq\R^n$ be an integral polyhedron. Then $r^*(P)=+\infty$ if and only if $P\ne\varnothing$ and there exists a one-dimensional rational linear subspace $L\not\subseteq\lin P$ such that
$P+L$ is relatively lattice-free.
\end{theorem}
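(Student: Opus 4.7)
My plan is to prove the two directions separately: the forward implication $(\Leftarrow)$ by an explicit construction of relaxations with unbounded CG rank, and the reverse implication $(\Rightarrow)$ by a uniform upper bound on $r(Q)$ across all relaxations $Q$ of $P$.

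For $(\Leftarrow)$, let $\ell$ be a primitive integer vector spanning $L$. Because $L\not\subseteq\lin P$ we have $\dim(P+L)=\dim P+1$, and the hypothesis that $P+L$ is relatively lattice-free propagates to the prism $P+[0,t]\ell$ for every $t\ge 0$. I pick a point $p$ in the relative interior of a suitable face of $P$ and set $Q_t=\conv\bigl(P\cup\{p+t\ell\}\bigr)$. Since $\relint Q_t\subseteq\relint(P+L)$, the relative interior of $Q_t$ is lattice-free, and a careful choice of $p$ (avoiding integer points on the new facets swept out by the apex $p+t\ell$) guarantees $Q_t\cap\Z^n=P\cap\Z^n$, so $Q_t$ is a relaxation of $P$. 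The key quantitative step is that every CG inequality $ax\le\lfloor\max_{Q_t}ax\rfloor$ with $a\cdot\ell\ne 0$ reduces the maximum of $ax$ on $Q_t$ by at most $1$, so after $k$ CG-closure operations the resulting polyhedron still extends a distance $\Omega(t-k)$ above $P$ in direction $\ell$. This forces $r(Q_t)\to+\infty$ as $t\to+\infty$.

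For $(\Rightarrow)$, assume that for every one-dimensional rational $L\not\subseteq\lin P$ the set $P+L$ contains an integer point in its relative interior. I would first reduce to the case of a pointed polytope by quotienting out $\lin P$ and handling the rational extreme rays of $\rec P$ separately; any relaxation $Q$ has $\rec Q=\rec P$, so the only unboundedness in $Q\setminus P$ lies in the lineality directions. The hypothesis then produces, for each rational direction $\ell\notin\lin P$, an integer point of $\Z^n\setminus\relint P$ lying inside the cylinder $P+\R\ell$, and this point is the witness of a CG inequality that bounds the slack of any relaxation in direction $\ell$ by an explicit amount. A compactness argument over the rational unit sphere (leveraging the integrality of $P$ to finitely cover the directions that matter at each stage) turns these pointwise bounds into a single integer $k=k(P)$ with $SC^k(Q)\subseteq P$ for every relaxation $Q$.

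The main obstacle is the $(\Rightarrow)$ direction, and in particular the step from pointwise information---one integer point per direction $L$---to a uniform CG rank bound valid for all relaxations at once. Relaxations can simultaneously stretch $P$ in many directions, and the integer points used as cut witnesses change with the direction; the delicate work lies in structuring the relaxation by its facet normals with respect to $P$ and using integrality to extract a finite combinatorial certificate from the a priori infinite collection of rational $L$.
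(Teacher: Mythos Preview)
This theorem is not proved in the present paper; it is quoted from \cite{ipco}, so there is no ``paper's own proof'' to compare against line by line. That said, the paper does reveal the shape of the argument in \cite{ipco} (see the opening of Section~\ref{sec:d} and the proof of (a) in Section~5.2), and your $(\Leftarrow)$ direction is essentially subsumed by Section~\ref{sec:suff} here, which proves the stronger split-closure statement. The apex is taken at $\bar x\in\relint P$, the relaxation property is exactly Claim~\ref{claim:relax}, and the ``each round costs at most a bounded amount of height'' step is the Case~1/Case~2 analysis there. Your plan for this direction is correct and matches the paper.

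For $(\Rightarrow)$ your route differs from \cite{ipco} and has a real gap. The approach referenced in this paper is the direct one, not your contrapositive: assume $r^*(P)=+\infty$, take relaxations $Q_i$ with $r(Q_i)\to\infty$, invoke Corollary~\ref{cor:ub-chv} to conclude that no bounded set contains all $Q_i$, pick $x_i\in Q_i$ with $\|x_i\|\to\infty$, use sequential compactness of the unit sphere to get $(x_i-\bar x)/\|x_i-\bar x\|\to\bar v$, and then argue that $P+\langle\bar v\rangle$ is lattice-free (rationalizing $L$ afterwards via the maximal lattice-free structure of Section~\ref{sec:lf}). The compactness is applied to a single sequence of relaxations, not to the space of directions.

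Your proposed compactness over rational directions does not go through as written. First, an integer point $z\in\relint(P+\R\ell)$ is not a CG-cut witness for a relaxation $Q$: if $z\notin P$ then $z\notin Q$, which bounds how far $Q$ can reach toward $z$, but it says nothing about $Q$ in nearby directions, nor does it produce a CG inequality whose effect you can track. Second, the distance from $P$ to the nearest such witness depends on $\ell$ and is neither continuous nor bounded as $\ell$ varies, so a finite cover of directions with uniform slack bounds is not available from your hypothesis alone. Turning ``one integer point per direction'' into a uniform rank bound over all relaxations is precisely what the limit-direction argument sidesteps by working with a specific unbounded sequence rather than attempting to control every relaxation simultaneously.
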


Note that if an integral polyhedron $P$, a nonempty face $F$ of $P$ and a rational linear subspace $L\not\subseteq\lin P$ satisfy conditions (i)--(ii) of Theorem~\ref{th:main}, then $P\ne\varnothing$ and $P+L$ is relatively lattice-free.
Thus the conditions of Theorem~\ref{th:main} are a strengthening of those of Theorem~\ref{th:CG}. This is not surprising, as every CG inequality is a split inequality, thus $s(Q)\le r(Q)$ for every rational polyhedron $Q$ and $s^*(P)\le r^*(P)$ for every integral polyhedron $P$. Indeed, there are examples of integral polyhedra with finite reverse split rank but infinite reverse CG rank: for instance, the polytope defined as the convex hull of points $(0,0)$ and $(0,1)$ in $\R^2$ (see \cite{ipco}).

The comparison between Theorem~\ref{th:main} and Theorem~\ref{th:CG} suggests that there is some ``gap'' between the CG rank and the split rank. This is not surprising, as the literature already offers results in this direction. For instance, if we consider a rational polytope contained in the cube $[0,1]^n$, it is known that its split rank is at most $n$ \cite{balas}, while its CG rank can be as high as $\Omega(n^2)$ (see \cite{RothSan}; weaker results were previously given in \cite{Eisc,PoSt}). Some more details about the differences between the statements of Theorem~\ref{th:main} and Theorem~\ref{th:CG} will be given at the end of the paper.

We remark that, despite the similarity between the statements of Theorem~\ref{th:main} and Theorem~\ref{th:CG}, the proof of the former result (which we give here) needs more sophisticated tools and is more involved.

The rest of the paper is organized as follows.
In Sect.~\ref{sec:basics} we recall some known facts. In Sect.~\ref{sec:integer-points} we present two results on the position of integer points close to linear or affine subspaces: these results, beside being used in the proof of Theorem~\ref{th:main}, seem to be of their own interest.
The sufficiency of conditions (i)--(ii) of Theorem~\ref{th:main} is proved in Sect.~\ref{sec:suff}, while the necessity of the conditions is shown in Sect.~\ref{sec:nec} for bounded polyhedra, and in Sect.~\ref{sec:nec-unbounded} for unbounded polyhedra.
In Sect.~\ref{sec:mixed} we discuss a connection between the concept of reverse split rank in the pure integer case and that of split rank in the mixed-integer case.
We conclude with some observations in Sect.~\ref{sec:concl}.

\section{Basic facts}\label{sec:basics}

In this section we introduce some notation and present some basic facts that will be used in the proof of Theorem~\ref{th:main}.
We refer the reader to a textbook, e.g.~\cite{sch}, for standard preliminaries that do not appear here.

Given a point $x\in\R^n$ and a number $r>0$, we denote by $B(x,r)$ the closed ball of radius $r$ centered at $x$.
We write $\aff P$ to indicate the affine hull of a polyhedron $P\subseteq\R^n$, $\rec P$ for the recession cone of $P$, and we recall that by $\lin P$ we denote the lineality space of $P$.
The angle between two vectors $v,w\in\R^n$ is denoted by $\phi(v,w)$.
The Euclidean norm of a vector $v\in\R^n$ is denoted by $\|v\|$, while $\dist(A,B):=\inf\{\|a-b\|:a \in A,\,b\in B\}$ is the Euclidean distance between two subsets $A,B\subseteq\R^n$. (If $A=\{a\}$, we write $\dist(a,B)$ instead of $\dist(\{a\},B)$.)
Given subsets $S_1,\dots,S_k$ of $\R^n$, we indicate with $\langle S_1,\dots,S_k\rangle$ the linear subspace of $\R^n$ generated by the union of $S_1,\dots,S_k$. (If $S$ is a subset of $\R^n$ and $v\in\R^n$, we write $\langle S,v\rangle$ instead of $\langle S,\{v\}\rangle$ and $\langle v\rangle$ instead of $\langle\{v\}\rangle$.)
Further, $L^\bot$ is the orthogonal complement of a linear subspace $L\subseteq\R^n$.
Finally, we denote by $\cone(v_1,\dots,v_k)$ the set of conic combinations of vectors $v_1,\dots,v_k\in\R^n$.

\subsection{Unimodular transformations}\label{sec:unimod}

A \emph{unimodular transformation} $u: \R^n \rightarrow \R^n$ maps a point $x \in \R^n$ to $u(x)=Ux + v$, where $U$ is an $n\times n$ unimodular matrix (i.e., a square integer matrix with $\mathopen|\det(U)|=1$) and $v\in \Z^n$. It is well-known (see e.g.~\cite{sch}) that $U$ is a unimodular matrix if and only if so is $U^{-1}$. Furthermore, a unimodular transformation is a bijection of both $\R^n$ and $\Z^n$. It follows that if $Q\subseteq \R^n$ is a rational polyhedron and $u : \R^n \rightarrow \R^n$ is a unimodular transformation, then the split rank of $Q$ coincides with the split rank of $u(Q)$.

The following basic fact will prove useful: if $L\subseteq\R^n$ is a rational linear subspace of dimension $d$, then there exists a unimodular transformation that maps $L$ to the subspace $\{x\in\R^n:x_{d+1}=\dots=x_n=0\}$; in other words, $L$ is equivalent to $\R^d$ up to a unimodular transformation.

\subsection{Some properties of CG and split rank}

We will use the following result (see \cite[Lemma~10]{non-full-dim}) and its easy corollary.

\begin{lemma}\label{lem:ub-chv}
For every $n\in\N$ there exists a number $\theta(n)$ such that the following holds:
for every rational polyhedron $Q \subseteq \R^n$, $c \in \Z^n$ and $\delta,\delta'\in\R$ with $\delta'\ge\delta$, where $cx \le
\delta$ is valid for $\conv(Q\cap\Z^n)$ and $cx \le \delta'$ is valid for $Q$, the inequality $cx
\le \delta$ is valid for the $p$-th CG closure of $Q$, where $p=(\lfloor \delta'\rfloor - \lfloor\delta
\rfloor) \theta(n)+1$.
\end{lemma}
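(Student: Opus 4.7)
My plan is induction on $n$ with a one-unit reduction of the right-hand side. Since $c\in\Z^n$ and $cx\le\delta'$ is valid for $Q$, the cut $cx\le\lfloor\delta'\rfloor$ already belongs to the first CG closure of $Q$; this accounts for the ``$+1$'' in the formula. What remains is to show that for each integer $k$ with $\lfloor\delta\rfloor<k\le\lfloor\delta'\rfloor$, passing from the validity of $cx\le k$ to that of $cx\le k-1$ costs at most $\theta(n)$ further CG closures. Iterating this $\lfloor\delta'\rfloor-\lfloor\delta\rfloor$ times then gives the claimed bound.

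For the one-unit reduction, let $Q'$ denote the current polyhedron in the CG hierarchy of $Q$ and set $F:=Q'\cap\{x\in\R^n:cx=k\}$. Since every integer point of $Q'$ satisfies $cx\le\lfloor\delta\rfloor\le k-1$, the face $F$ contains no integer points. By Sect.~\ref{sec:unimod} I may apply a unimodular transformation so that $c$ becomes a positive multiple of $e_n$, after which $\{cx=k\}$ is unimodularly equivalent to $\R^{n-1}$ with its standard lattice, and $F$ is a rational lattice-free polyhedron there. I invoke the classical uniform version of Schrijver's finiteness theorem (provable via the flatness theorem): every rational lattice-free polyhedron in $\R^{n-1}$ has empty $\sigma(n-1)$-th CG closure, for a constant $\sigma(n-1)$ depending only on dimension. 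To transport this bound back to $Q'$, I use the standard lifting trick: given a CG cut $c'x\le\lfloor\alpha\rfloor$ valid for $F$ with $c'\in\Z^n$ in the direction of the hyperplane, pick an integer $\mu\ge 0$ large enough that $(c'+\mu c)x\le\alpha+\mu k$ is valid on all of $Q'$. Since $k$ is an integer, the resulting CG cut $(c'+\mu c)x\le\lfloor\alpha\rfloor+\mu k$ restricts on $\{cx=k\}$ to the original cut on $F$. Iterating this lift for each CG cut of $F$, after $\sigma(n-1)$ closures the slice at $cx=k$ is empty; then the maximum of $cx$ on the current polyhedron is strictly less than $k$, and one further CG step yields $cx\le k-1$. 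Hence $\theta(n):=\sigma(n-1)+1$ works.

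The main obstacle is the lifting argument, especially in the unbounded case: for each extreme ray $v$ of $\rec Q'$ with $cv<0$ the coefficient $\mu$ must dominate $c'v/(-cv)$, and one must verify that the finitely many extreme rays and vertices of the rational polyhedron $Q'$ admit a single integer $\mu$ satisfying all constraints simultaneously. A secondary difficulty is that the sub-lemma on lattice-free polyhedra is itself a nontrivial ingredient (established by induction on dimension via the flatness theorem), so $\theta(n)$ is obtained recursively and not easily made explicit.
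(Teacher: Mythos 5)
The paper does not prove this lemma: it cites it verbatim as Lemma~10 of \cite{non-full-dim} (Averkov, Conforti, Del Pia, Di Summa, Faenza) and uses it as a black box, so there is no in-paper argument to compare yours against. Your reconstruction is nonetheless plausible and, as far as I can tell, essentially sound. The skeleton is: the first CG step yields $cx\le\lfloor\delta'\rfloor$; each one-unit descent from $cx\le k$ to $cx\le k-1$ is obtained by noting that the face $F=Q'\cap\{x:cx=k\}$ contains no integer points (all integer points of $Q'$ satisfy $cx\le\lfloor\delta\rfloor\le k-1$), invoking a uniform bound $\sigma(n-1)$ on the CG rank of \emph{lattice-point-free} rational polyhedra in dimension $n-1$, and transferring the resulting emptiness to $Q'$ by the standard face identity $CG(F)=CG(Q')\cap F$; then one additional CG step in direction $c$ drops the right-hand side below $k$. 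This gives $\theta(n)=\sigma(n-1)+1$ as claimed. A few points need care. You should say ``lattice-point-free'' rather than ``lattice-free'': absence of interior lattice points alone makes the sub-lemma false, since such a polyhedron can have a nonempty integer hull. The unimodular reduction of $\{cx=k\}$ to $\R^{n-1}$ with its standard lattice tacitly assumes $c$ primitive and $\gcd(c)\mid k$; if $\gcd(c)\nmid k$ the hyperplane misses $\Z^n$ entirely and a single CG cut with $c/\gcd(c)$ already yields $cx\le k-1$, so this case is easy but should be acknowledged. Finally, you correctly flag that the real content is the uniform CG-rank bound for lattice-point-free rational polyhedra, including the unbounded case (where one must handle the rational recession cone, e.g.\ by projecting it out and using the flatness theorem on a bounded cross-section); the proposal treats it as known rather than proving it, which is fine for an outline but is precisely where most of the work in any complete proof would live.
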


\begin{corollary}\label{cor:ub-chv}
Given an integral polytope $P\subseteq\R^n$ and a bounded set $B$ containing $P$, there exists an integer $N$ such that $r(Q)\le N$ for all relaxations $Q$ of $P$ contained in $B$.
\end{corollary}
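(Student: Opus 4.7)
The plan is to apply Lemma~\ref{lem:ub-chv} separately to each of the finitely many facet-defining inequalities of $P$, using the boundedness of $B$ to produce a bound that does not depend on the choice of relaxation $Q$.

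First I would write $P=\{x\in\R^n:c_ix\le\delta_i,\ i=1,\dots,m\}$ where each $c_i\in\Z^n$ is primitive and each $\delta_i\in\Z$; this is possible because $P$ is an integral polytope, so every facet-defining hyperplane is spanned by integer points and admits an integer primitive normal $c_i$, against which the maximum over $P$ is an integer. For each $i$, set $M_i:=\sup\{c_ix:x\in B\}$, which is a finite real number because $B$ is bounded and depends only on $P$ and $B$ (not on $Q$).

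Now let $Q$ be any relaxation of $P$ with $Q\subseteq B$. Since $\conv(Q\cap\Z^n)=P$, the inequality $c_ix\le\delta_i$ is valid for $\conv(Q\cap\Z^n)$. Setting $\delta_i':=\max\{c_ix:x\in Q\}\le M_i$, the inequality $c_ix\le\delta_i'$ is valid for $Q$. Lemma~\ref{lem:ub-chv} then yields that $c_ix\le\delta_i$ is valid for the $p_i$-th CG closure of $Q$, where
\[p_i=(\lfloor\delta_i'\rfloor-\lfloor\delta_i\rfloor)\theta(n)+1\le(\lfloor M_i\rfloor-\delta_i)\theta(n)+1,\]
and the last bound is independent of $Q$ (here I use that $\delta_i\in\Z$, so $\lfloor\delta_i\rfloor=\delta_i$).

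Finally I would set $N:=\max_i\bigl((\lfloor M_i\rfloor-\delta_i)\theta(n)+1\bigr)$. Since CG closures are nested, each inequality $c_ix\le\delta_i$ is valid for the $N$-th CG closure of $Q$, and therefore this closure is contained in $P$. On the other hand it always contains $\conv(Q\cap\Z^n)=P$, so it equals $P$, giving $r(Q)\le N$ as required. There is no real obstacle here beyond recording that the right-hand sides $\delta_i$ are integers and that $\delta_i'$ can be controlled uniformly by $B$; everything else is a direct invocation of Lemma~\ref{lem:ub-chv}.
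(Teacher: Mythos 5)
Your proof is correct and is essentially the intended direct consequence of Lemma~\ref{lem:ub-chv}: apply the lemma to each of the finitely many integer inequalities $c_ix\le\delta_i$ defining $P$, bound $\delta_i'$ uniformly over $Q$ using the boundedness of $B$, and take the maximum. The paper states this as an "easy corollary" without writing out a proof, and your argument is the natural one; the only trivial points worth noting are that $N$ should be taken as an integer (rounding up if $\theta(n)$ is not integral) and that the description $P=\{x:c_ix\le\delta_i\}$ with integer $c_i,\delta_i$ is available for any nonempty integral polytope even if it is not full-dimensional.
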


We also need the following lemma.


\begin{lemma}\label{lem:split-faces}
Let $Q\subseteq\R^n$ be a rational polyhedron contained in a split $S$, where $S=\{x\in\R^n:\beta\le ax\le\beta+1\}$. Let $Q^0$ (resp., $Q^1$) be the face of $Q$ induced by the inequality $ax\ge\beta$ (resp., $ax\le\beta+1$). Then $s(Q)\le\max\{s(Q^0),s(Q^1)\}+1$.
\end{lemma}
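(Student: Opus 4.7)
The plan is to apply the split $S$ at both ends of the derivation: the initial round folds $Q$ onto the union of its two $a$-faces, the next $k:=\max\{s(Q^0),s(Q^1)\}$ rounds separately drive these faces down to their integer hulls $A_i:=\conv(Q^i\cap\Z^n)$ while remaining inside $Q$, and a final round against $S$ collapses everything into $\conv(A_0\cup A_1)=\conv(Q\cap\Z^n)$.

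The engine of the argument is the commutation identity
\[ SC(P)\cap H_i \;=\; SC(P\cap H_i) \qquad (i=0,1), \]
valid for every rational polyhedron $P\subseteq S$, where $H_0=\{x:ax=\beta\}$ and $H_1=\{x:ax=\beta+1\}$. To prove the nontrivial inclusion for $i=0$, fix a split $S'$ and take $y\in\conv(P\setminus\int S')\cap H_0$. Writing $y=\sum_j\mu_j y_j$ as a convex combination of points $y_j\in P\setminus\int S'$, the equality $ay=\beta$ together with $ay_j\ge\beta$ (which holds because $P\subseteq S$) forces $ay_j=\beta$ for every $j$ with $\mu_j>0$, i.e.\ $y_j\in(P\cap H_0)\setminus\int S'$; hence $y\in\conv((P\cap H_0)\setminus\int S')$. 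Intersecting over all splits $S'$ and combining with the trivial reverse inclusion gives the identity; the case $i=1$ is symmetric.

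Iterating the identity---legitimate because $SC^j(Q)\subseteq Q\subseteq S$ at every step---yields $SC^k(Q)\cap H_i=SC^k(Q^i)=A_i$ for $i=0,1$. Since $S$ is one of the splits entering the intersection defining $SC(SC^k(Q))$, and $SC^k(Q)\subseteq S$, we obtain
\[ SC^{k+1}(Q)\subseteq\conv\bigl(SC^k(Q)\setminus\int S\bigr)=\conv\bigl((SC^k(Q)\cap H_0)\cup(SC^k(Q)\cap H_1)\bigr)=\conv(A_0\cup A_1)=\conv(Q\cap\Z^n), \]
so $s(Q)\le k+1$. The main obstacle is the commutation identity; its proof relies crucially on the hypothesis $P\subseteq S$, which guarantees $ay_j\ge\beta$ for every $y_j\in P$, so that the only way a convex combination of such points can land on $H_0$ is for each participating point to already lie on $H_0$.
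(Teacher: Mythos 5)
Your proof is correct and follows essentially the same route as the paper: the paper also reduces to $\max\{s(Q^0),s(Q^1)\}$ rounds on the two $a$-faces plus one final round with $S$, via the identity $SC(Q)\cap Q^j = SC(Q^j)$ (cited there from Cook--Kannan--Schrijver as a general fact about faces), whereas you rederive the needed special case of that commutation from first principles. The only point you leave implicit is that every element of $\conv(P\setminus\int S')$ for a possibly unbounded $P$ is indeed a finite convex combination of points of $P\setminus\int S'$, which is standard but worth a word.
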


\begin{proof}
For $j=0,1$, since $Q^j$ is a (possibly non-proper) face of $Q$, we have $SC(Q^j)=SC(Q)\cap Q^j$ (see \cite{CKS}).
Then, for $k=\max\{s(Q^0),s(Q^1)\}$, both $SC^k(Q)\cap Q^0$ and $SC^k(Q)\cap Q^1$ are integral polyhedra. It follows that after another application of the split closure (actually, the split $S$ is sufficient) we obtain an integral polyhedron.
\end{proof}


\subsection{Maximal lattice-free convex sets}\label{sec:lf}

A {\em maximal lattice-free} convex set is a convex set that is not strictly contained in any lattice-free convex set.
A result in~\cite[Theorem 2]{BasuDirichelet} (see also~\cite{Lov}) states that a maximal lattice-free convex set in $\R^n$ is either an irrational hyperplane or a polyhedron $P+L$, where $P$ is a polytope and $L$ is a rational linear subspace.
In particular, since a full-dimensional set is never contained in a hyperplane, every full-dimensional lattice-free convex set is contained in a set of the form $P+L$, where $P$ is a polytope and $L$ is a rational linear subspace.

\subsection{Lattice width}

The lattice width $w(K)$ of a closed convex set $K\subseteq\R^n$ (with respect to the integer lattice $\Z^n$) is defined by
\[w(K) := \inf_{c \in \Z^n \sm \{0\}}\left\{\sup_{x \in K} cx - \inf_{x \in K} cx \right\}.\]
If $K$ is full-dimensional and $w(K)<+\infty$, then there exists a nonzero integer vector $c$ for which
\[w(K) = \max_{x \in K} cx - \min_{x \in K} cx .\]
Furthermore, $c$ is a primitive vector. (See, e.g., \cite{Bar}.)

We will use the following extension of the well-known Flatness Theorem of Khintchine~\cite{Khi} (see also \cite[Chapter~7]{Bar}), which is taken from \cite[Corollary~5]{non-full-dim} (see also \cite[Theorem~(4.1)]{KanLov}).
\begin{lemma}\label{lem:flat}
For every $k\in\N$ and every convex body $K \subseteq \R^n$ with $|K \cap k\Z^n|=1$, one has $w(K)\leq \omega(n,k)$, where $\omega$ is a function depending on $n$ and $k$ only.
\end{lemma}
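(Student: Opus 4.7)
The plan is to reduce the statement to the classical Flatness Theorem of Khintchine via a scaling argument. Define $K' := K/k$. Since the lattice width with respect to $\Z^n$ is positively homogeneous of degree one, $w(K) = k \cdot w(K')$; and since the map $x \mapsto x/k$ is a bijection between $K \cap k\Z^n$ and $K' \cap \Z^n$, one has $|K' \cap \Z^n| = 1$. It therefore suffices to establish a bound $w(K') \le \omega(n,1)$ depending only on $n$ for every convex body $K'$ containing at most one integer point; the desired function is then $\omega(n,k) := k \cdot \omega(n,1)$.

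When the single integer point $p \in K' \cap \Z^n$ lies on the boundary of $K'$ (or when $K' \cap \Z^n = \varnothing$), $K'$ is lattice-free and the bound is immediate from Khintchine's classical Flatness Theorem. When $p$ lies in the interior of $K'$, I would translate so that $p = 0$ and slice $K'$ by a rational hyperplane $H = \{cx = 0\}$ (with $c \in \Z^n$ primitive) into two halves $K'_+, K'_-$ whose relative interiors contain no integer points; each half is therefore lattice-free, and Khintchine applied to each half produces primitive integer directions $c_+, c_-$ along which that half is flat. The width of $K'$ in a carefully chosen integer direction is then controlled by combining the widths of the two halves, using that the halves share $H$ as a common face and that $0$ lies in $H$.

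The main obstacle is this last combination step: the flat directions $c_+$ and $c_-$ of the two halves will in general differ, so producing a single integer direction of controlled length along which $K'$ itself is flat requires balancing the choice of the slicing normal $c$ against the directions returned by the Flatness Theorem on each side. Making this quantitative is precisely the content of the Kannan--Lov\'asz refinement cited in the statement, and the cleanest route is to invoke that refinement as a black box applied to $K'$ and then undo the rescaling, rather than to re-derive the bound from scratch.
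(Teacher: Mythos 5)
The paper does not prove this lemma; it is imported from \cite[Corollary~5]{non-full-dim} (cf.\ \cite[Theorem~(4.1)]{KanLov}), so there is no in-paper argument to match. Your scaling reduction is correct: lattice width scales linearly under $K\mapsto K/k$, and $x\mapsto x/k$ maps $K\cap k\Z^n$ bijectively to $(K/k)\cap\Z^n$, so setting $\omega(n,k)=k\,\omega(n,1)$ reduces everything to the case $k=1$.

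The slicing argument for the interior-point case has the gap you yourself flag: each half $K'_\pm$ is interior-lattice-free and hence flat, but the flat directions $c_+$, $c_-$ of the two halves are unrelated, and for a fixed integer direction $c$ the width of $K'$ only splits as a sum of the widths of $K'_+$ and $K'_-$ in that \emph{same} direction $c$ (using $p\in K'_+\cap K'_-$); taking $c=c_+$ then leaves the $K'_-$ term uncontrolled, so no single bounded-width direction for $K'$ emerges this way. Your fallback of invoking Kannan--Lov\'asz as a black box on $K'$ essentially reproduces the paper's own citation, but note that it is not quite a black box: those flatness statements are formulated for bodies whose interior (or whose whole body) misses the lattice, whereas $K'$ may contain an interior lattice point, and closing that gap is precisely what \cite[Corollary~5]{non-full-dim} provides. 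So the net content of your proposal beyond the paper's citation is the correct and useful reduction to $k=1$; the $k=1$ case itself remains a citation, not a proof.
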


\subsection{Compactness}

The proof of Theorem~\ref{th:main} exploits the notion of compactness and sequential compactness, which we recall here.
A subset $K$ of a topological space is {\em compact} if every collection of open sets covering $K$ contains a finite subcollection which still covers $K$. It is well-known that a subset of $\R^n$ is compact (with respect to the usual topology of $\R^n$) if and only if it is closed and bounded.
For a normed space (such as $\R^n$) the notion of compactness coincides with that of sequential compactness: a set $K$ is {\em sequentially compact} if every sequence $(x_i)_{i\in\N}$ of elements of $K$ admits a subsequence that converges to an element of $K$.

\section{On integer points close to subspaces}\label{sec:integer-points}

A result given in \cite{BasuDirichelet}, based on Dirichlet's approximation theorem (see, e.g., \cite{sch}), shows that for each line passing through the origin there are integer points arbitrarily close to the line and arbitrarily far from the origin. (Note that if the line is not rational, then the origin is the only integer point lying on in it.) We give here a strengthening of that result, showing that for every line passing through the origin the integer points that are ``very close'' to the line are not too far from each other. Furthermore, this result is presented in a more general version, valid for every linear subspace. This lemma will be used in the proof of Theorem~\ref{th:main}, but we find it interesting in its own right.

\begin{lemma}\label{lem:integer-point}
Let $L\subseteq\R^n$ be a linear subspace and fix $\delta>0$. Then there exists $R>0$ such that, for every $x\in L$, there is an integer point $y$ satisfying $\|y-x\|\le R$ and $\dist(y,L)\le\delta$.
\end{lemma}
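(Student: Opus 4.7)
Let $L^*$ denote the smallest rational linear subspace of $\R^n$ containing $L$, set $\Lambda := L^* \cap \Z^n$ (a full-rank lattice in $L^*$), and let $\pi\colon \R^n \to L^\bot$ be orthogonal projection, so that $\dist(v,L) = \|\pi(v)\|$ for every $v\in\R^n$ and $\pi(L^*)$ is a linear subspace of $L^\bot$. The heart of the argument is the claim that $\pi(\Lambda)$ is dense in $\pi(L^*)$; granted this, a compactness argument combined with translating $x$ to a near lattice point gives the uniform constant $R$.

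To prove the density claim, I would pass to the torus $\R^n/\Z^n$. The image of $L$ there is connected (continuous image of the connected set $L$), hence its closure is a closed connected subgroup, i.e.\ a subtorus, necessarily of the form $W/(W\cap\Z^n)$ for some rational subspace $W\supseteq L$. Minimality of $L^*$ forces $L^* \subseteq W$, and the reverse inclusion holds because $L^* + \Z^n$ is already closed in $\R^n$ (the quotient $\Z^n/(L^*\cap\Z^n)$ has full rank $n-\dim L^*$, so its image in $\R^n/L^*$ is discrete), which pins $W$ to lie in the connected component of $0$ in $L^* + \Z^n$, namely $L^*$. Hence $W = L^*$, so $L + \Z^n$ is dense in $L^* + \Z^n$; intersecting with $L^*$ gives that $L + \Lambda$ is dense in $L^*$, and applying $\pi$ (using $\pi(L)=\{0\}$) yields the claim.

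Let $r_0$ be the covering radius of the lattice $\Lambda$ in $L^*$, and let $K := \{u \in \pi(L^*) : \|u\| \le r_0\}$, a compact set. By the density just proved, the open sets $\{u \in \pi(L^*) : \|u-\pi(w)\| < \delta\}$ for $w \in \Lambda$ cover $K$, so by compactness there is a finite subcover indexed by some $F \subseteq \Lambda$. Set $R_1 := \max_{w\in F}\|w\|$ and $R := r_0 + R_1$. Given any $x\in L$, pick $z\in\Lambda$ with $\|z-x\| \le r_0$; since $x\in L$ we have $\|\pi(z)\| = \|\pi(z-x)\| \le \|z-x\| \le r_0$, so $-\pi(z)\in K$, and some $w\in F$ satisfies $\|\pi(z)+\pi(w)\| \le \delta$. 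Then $y := z+w \in \Z^n$ satisfies $\|y-x\| \le r_0 + R_1 = R$ and $\dist(y,L) = \|\pi(y)\| \le \delta$, as required.

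I expect the main obstacle to be the density claim: verifying that the closure of the image of $L$ in the torus is a subtorus corresponding to the minimal rational subspace $L^*$ requires the structure theorem for closed connected subgroups of a torus and a careful argument that $L^* + \Z^n$ is already closed. Once that is in place, the rest of the proof is routine compactness.
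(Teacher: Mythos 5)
Your proof is correct, and it follows a genuinely different route from the paper's. The paper proceeds by a double induction on the ambient dimension $n$ and on $d=\dim L$: the base case $d=1$ is handled by covering a small ball in $\langle v\rangle^\bot$ by a finite family of narrow cones, showing each cone plus $\langle v\rangle$ is not lattice-free (via the structure theorem for maximal lattice-free convex sets), and then running a descent argument that subtracts integer vectors to bring a far-away integer point back near $x$; the inductive step splits $L$ orthogonally into $\langle v\rangle$ and $L'=L\cap\langle v\rangle^\bot$ and stitches the two conclusions together. You instead argue globally: you identify the smallest rational subspace $L^*$ containing $L$, prove that $L+\Z^n$ is dense in $L^*+\Z^n$ via the closed-connected-subgroup structure theorem for $\R^n/\Z^n$ (the closure of the image of $L$ is a subtorus $W/(W\cap\Z^n)$, and minimality of $L^*$ together with the closedness of $L^*+\Z^n$ force $W=L^*$), then pass to the projection $\pi$ onto $L^\bot$ to get that $\pi(\Lambda)$ is dense in $\pi(L^*)$, and finally extract the uniform constant $R$ by a compactness argument on a bounded piece of $\pi(L^*)$ plus the covering radius of $\Lambda$. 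Your approach avoids induction entirely and is conceptually cleaner, at the cost of invoking the classification of closed connected subgroups of the torus; the paper stays within the geometry-of-numbers toolkit of maximal lattice-free sets, which it has already set up for the rest of the argument. One small point worth spelling out in your write-up: passing from ``$L+\Z^n$ dense in $L^*+\Z^n$'' to ``$L+\Lambda$ dense in $L^*$'' uses that an approximant $z\in\Z^n$ sufficiently close to $L^*$ must in fact lie in $\Lambda$, which follows from the discreteness of the image of $\Z^n$ in $\R^n/L^*$ — the same fact you already use to show $L^*+\Z^n$ is closed, so it is available, but the step deserves a sentence.
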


\begin{proof}
The proof is a double induction on $n$ and $d:=\dim L$.
The statement is easily verified for $n=1$; so we fix $n\ge2$ and assume by induction that the result holds in dimension smaller than $n$.
The proof is now by induction on $d$.

\paragraph{\sc Base step}
The statement is trivial if $d=0$. We show that it is correct for $d=1$, as well. So we now assume that $L=\langle v\rangle$ for some $v\in\R^n\setminus\{0\}$.

\begin{claim}
If there is no row-vector $a\in\Z^n\setminus\{0\}$ such that $av=0$, then the result of the lemma holds.
\end{claim}

\begin{proofclaim}
Define the $(n-1)$-dimensional ball $B'=B(0,\delta)\cap\langle v\rangle^\bot$. Note that $B(0,\delta)+\langle v\rangle=B'+\langle v\rangle$.
Let us denote by $\relbd B'$ the relative boundary of $B'$.
For every $z\in\relbd B'$, denote by $C(z)$ the intersection of $B'$ with the open cone of revolution of direction $z$ and angle $\pi/6$.
We claim that for every $z\in\relbd B'$ and $z',w\in C(z)$, $z'-w\in B'$ (see Fig.~\ref{fig:ball}).
To see this, assume wlog that $\|z'\|\le\|w\|\le\delta$. Then
\[\begin{split}
\|z'-w\|^2 & = \|z'\|^2+\|w\|^2-2\,z'\cdot w\\
&=\|z'\|^2+\|w\|^2-\|z'\|\|w'\|\\
&\le\|z'\|^2+\delta\|w\|-\|z'\|\|w'\|\\
&=\|z'\|(\|z'\|-\|w\|)+\delta(\|w\|-\delta)+\delta^2\le\delta^2,
\end{split}\]
where the second equality holds because the angle between $z'$ and $w$ is at most $\pi/3$ and thus $\frac{z'\cdot w}{\|z'\|\|w'\|}=\cos(\pi/3)=1/2$.
This proves that $z'-w\in B'$ whenever $z',w\in C(z)$.

Note that $B'$ is a compact set, $B'=\bigcup_{z\in\relbd B'}C(z)$, and each $C(z)$ is an open set in $B'$. Then there exist $z_1,\dots,z_m\in\relbd B'$ such that $B'=\bigcup_{i=1}^mC(z_i)$. Define $C_i=C(z_i)$ for $i=1,\dots, m$.

\begin{figure}
\begin{center}
\psset{unit=7mm}
\psset{linewidth=.5pt}
\begin{pspicture}(6,6.4)
\pscircle(3,3){3}
\psline(2,5.8)(3,3)(4,5.8)
\put(3,3){\circle*{0.15}}
\put(2.7,2.7){0}
\put(2.3,5.5){\circle*{0.15}}
\put(2.4,5.5){$w$}
\put(3.1,3.6){\circle*{0.15}}
\put(3,3.8){$z'$}
\psline[linestyle=dashed](3.1,3.6)(3.8,1.1)
\put(3.8,1.1){\circle*{0.15}}
\put(4,1){$z'-w$}
\put(2.6,4.7){$C(z)$}
\put(1.3,2.7){$B'$}
\put(3,6.15){$z$}
\put(3,6){\circle*{0.15}}
\end{pspicture}
\caption{Illustration of the proof of the claim.}
\label{fig:ball}
\end{center}
\end{figure}
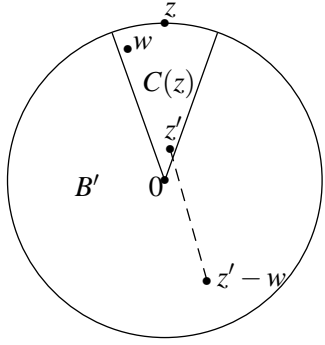

We claim that for every $i=1,\dots, m$, the set $C_i+\langle v\rangle$ is not lattice-free. To see this, assume by contradiction that $C_i+\langle v\rangle$ is lattice-free. Then it is contained in a full-dimensional maximal lattice-free polyhedron, i.e., a set of the form $P+L'$ for some polytope $P\subseteq \R^n$ and some rational linear subspace $L'\subseteq\R^n$ (recall Sect.~\ref{sec:lf}). Note that $v\in L'$. However, this is not possible, as we assumed that there is no row-vector $a\in\Z^n\setminus\{0\}$ such that $av=0$ (i.e., $v$ is not contained in any rational subspace). Therefore $C_i+\langle v\rangle$ is not lattice-free for $i=1,\dots,m$. This implies that $C_i+\cone(v)$ is not lattice-free for $i=1,\dots,m$.

For $i=1,\dots,m$, let $w_i$ be an integer point in $C_i+\cone(v)$; note that $w_1,\dots,w_m\notin\langle v\rangle^\bot$ because of the hypothesis of the claim. Observe that for every $z\in B'+\langle v\rangle$, at least one of the points $z-w_1,\dots,z-w_m$ is still in $B'+\langle v\rangle$ (just choose $i$ such that $z\in C_i+\langle v\rangle$).
We define
\[M=\max_{i=1,\dots,m}\dist(w_i,B')>0, \quad \mu=\min_{i=1,\dots,m}\dist(w_i,B')>0,\]
and show that the statement of the lemma holds by choosing $R=M+\delta$.

Take any $x\in\langle v\rangle$. Since $v$ is contained in no rational subspace, the set $x+B'+\cone(v)$ contains an integer point $z$ (see, e.g., \cite[Lemma 2.2]{BasuDirichelet}).
If $\dist(z,x+B')>M$, we choose $i\in\{1,\dots,m\}$ such that the point $z'=z-w_i$ is still in $B(x,\delta)+\cone(v)$. Since  $0<\dist(z',x+B')\le\dist(z,x+B')-\mu$, and $\mu>0$, by iterating this procedure a finite number of times we arrive at an integer point $y\in B(x,\delta)+\cone(v)$ such that $\dist(y,x+B')\le M$.
Then $\|y-x\|\le M+\delta=R$ and $\dist(y,\langle v\rangle)\le\delta$. This concludes the proof of the claim. 
\end{proofclaim}

We can now prove the lemma for $d=1$. Let $S\subseteq\R^n$ be a minimal rational subspace containing $v$. If $S=\R^n$, then the hypothesis of the claim is satisfied and we are done. So assume that $\dim S<n$. In this case, by applying a unimodular transformation we can reduce ourselves to the case in which $S=\R^{\dim S}\times\{0\}^{n-\dim S}$ and use induction, as $S$ is now equivalent to an ambient space of dimension smaller than $n$.
Though unimodular transformations do not preserve distances, there exist positive constants $c_1\le c_2$ (depending only on the transformation) such that the distance between any two points (or sets) is scaled by a factor between $c_1$ and $c_2$, so the arguments can be easily adapted.

\paragraph{\sc Inductive step}
Fix $d\ge 2$ and assume that the lemma holds for every subspace of $\R^n$ of dimension smaller than $d$.
Fix any $v\in L$ and define $L'=L\cap\langle v\rangle^\bot$.
By the base step of the induction, there exists $R_1>0$ such that, for every $x\in\langle v\rangle$, there is an integer point $y$ satisfying $\|y-x\|\le R_1$ and $\dist(y,\langle v\rangle)\le\delta/2$.
Furthermore, by induction, there exists $R_2>0$ such that, for every $x\in L'$, there is an integer point $y$ satisfying $\|y-x\|\le R_2$ and $\dist(y,L')\le\delta/2$. Note that this remains true also if we replace $L'$ with an affine subspace $L_1$ obtained by translating $L'$ by an integer vector. We show that the result of the lemma holds with $R=R_1+R_2$; see Fig.~\ref{fig:int-pt-ind} to follow the proof.

Take any $x\in L$ and decompose it by writing $x=x_1+x_2$, where $x_1\in\langle v\rangle$ and $x_2\in L'$.
Let $y_1$ be an integer point satisfying $\|y_1-x_1\|\le R_1$ and $\dist(y_1,\langle v\rangle)\le\delta/2$. Define $L_1=y_1+L'$.
Let $x'=y_1+x_2$; note that $\|x'-x\|=\|y_1-x_1\|\le R_1$.
Since $L_1$ is a translation of $L'$ by an integer vector, there is an integer point $y_2$ satisfying $\|y_2-x'\|\le R_2$ and $\dist(y_2,L_1)\le\delta/2$. Now, $\dist(y_2,L)\le\dist(y_2,L_1)+\dist(L_1,L)\le\delta/2+\delta/2\le\delta$. Furthermore, $\|y_2-x\|\le\|y_2-x'\|+\|x'-x\|\le R_2+R_1=R$.
This concludes the proof of Lemma~\ref{lem:integer-point}.
\end{proof}

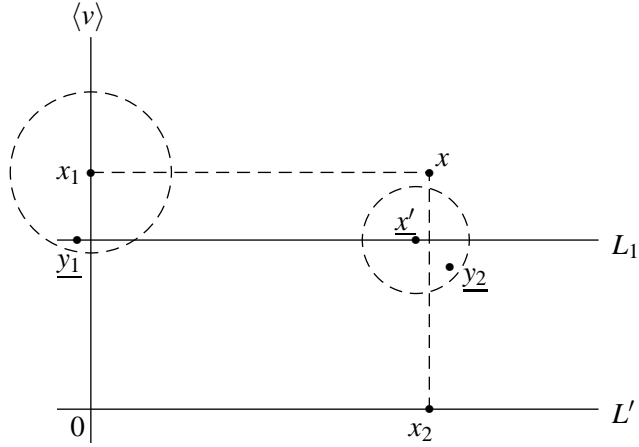
\begin{figure}
\begin{center}
\psset{unit=9mm}
\psset{linewidth=.5pt}
\begin{pspicture}(10,6.6)
\psline(1,0.5)(9,0.5)
\psline(1.5,0)(1.5,6)
\put(1.2,0.1){0}
\put(9.2,0.3){$L'$}
\put(1.2,6.2){$\langle v\rangle$}
\put(6.5,4){\circle*{.1}}
\put(6.6,4.1){$x$}
\psline[linestyle=dashed](1.5,4)(6.5,4)(6.5,0.5)
\put(1.5,4){\circle*{.1}}
\put(1,3.9){$x_1$}
\put(6.5,0.5){\circle*{.1}}
\put(6.2,0.1){$x_2$}
\pscircle[linestyle=dashed](1.5,4){1.2}
\put(1.3,3){\circle*{.1}}
\put(1,2.6){$\underline{y_1}$}
\psline(1,3)(9,3)
\put(9.2,2.8){$L_1$}
\put(6.3,3){\circle*{.1}}
\put(6,3.15){$\underline{x'}$}
\pscircle[linestyle=dashed](6.3,3){0.8}
\put(6.8,2.6){\circle*{.1}}
\put(7,2.4){$\underline{y_2}$}
\end{pspicture}
\caption{Illustration of the inductive step in the proof of Lemma~\ref{lem:integer-point}. The space $L$ is represented. Underlined symbols indicate points that do not necessarily belong to $L$; in other words, their orthogonal projection onto $L$ is represented. The circle on the left has radius $R_1$, the one on the right has radius $R_2$.}
\label{fig:int-pt-ind}
\end{center}
\end{figure}

We now prove a result that gives sufficient conditions guaranteeing that a non-full-dimensional simplex of a special type is ``very close'' to an integer point.

\begin{lemma}\label{lem:very-far-things}
For a given $k\in\N$, let $L_0\subseteq L_1\subseteq \dots \subseteq L_{k}\subseteq H$ be a sequence of linear subspaces, where $\dim L_t=t$ for $t=0,\dots,k$, and $H$ is a rational subspace. Then for every $\delta>0$ there exists $M>0$ such that the following holds: for every $x\in L_{k}$ and for every $y_1,\dots,y_{k}$ satisfying $y_t\in x+L_t$ and $\dist(y_t,x+L_{t-1})\ge M$ for $t=1,\dots,k$, one has
\[\dist(\conv(x,y_1,\dots,y_{k}), H\cap\Z^n)\leq \delta.\]
\end{lemma}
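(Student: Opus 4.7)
My plan is to prove Lemma~\ref{lem:very-far-things} by induction on $k$, after first reducing to the case $H=\R^n$ (so that $H\cap\Z^n=\Z^n$) via the unimodular transformation discussed in Section~\ref{sec:unimod}. The base case $k=0$ is immediate: $L_0=\{0\}$ forces $x=0\in\Z^n$, and the simplex is the single point $\{0\}$.

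For the inductive step from $k-1$ to $k$, I would set $\delta'=\delta/2$, take $M_{k-1}$ to be the constant given by the inductive hypothesis applied to the sub-chain $L_0\subseteq\dots\subseteq L_{k-1}$ with tolerance $\delta'$, and let $R$ be the constant from Lemma~\ref{lem:integer-point} applied to $L_k$ with tolerance $\delta'$; the constant $M$ is to be chosen sufficiently large in terms of $M_{k-1}$, $R$, and the geometry of the chain. Given $x\in L_k$ and $y_1,\dots,y_k$ satisfying the hypothesis, I would translate by $-x$ to obtain the simplex $\conv(0,z_1,\dots,z_k)$ with $z_t=y_t-x\in L_t$ and $\dist(z_t,L_{t-1})\ge M$. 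Applying the inductive hypothesis to the translated bottom face $\conv(0,z_1,\dots,z_{k-1})\subseteq L_{k-1}$ (with base vertex $0\in L_{k-1}$) would yield an integer point $\tilde q\in\Z^n$ within distance $\delta'$ of this face; translating back, $\tilde q+x$ is within $\delta'$ of the original face $\conv(x,y_1,\dots,y_{k-1})$, but is not itself integer because $x\in L_k$ need not be.

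To convert $\tilde q+x$ into a genuine integer point close to the full simplex, I would apply Lemma~\ref{lem:integer-point} to $L_k$ to find an integer $q_0\in\Z^n$ close to a target point $p^*\in L_k$, with $\|q_0-p^*\|\le R$ and $\dist(q_0,L_k)\le\delta'$. Choosing $p^*:=x+\lambda z_k$ for a carefully selected $\lambda\in(0,1)$ places $p^*$ on the edge $[x,y_k]$ of the simplex, and $q:=\tilde q+q_0\in\Z^n$ then approximates $x+(\text{point in translated face})+\lambda z_k$, which lies in the simplex whenever the coefficients of the face point plus $\lambda$ sum to at most $1$. To guarantee this, one applies the inductive hypothesis not to the full bottom face but to a scaled version $\conv(0,\tfrac12 z_1,\dots,\tfrac12 z_{k-1})$ (legal as long as $M\ge 2M_{k-1}$), forcing the barycentric coefficients of the face point to sum to at most $1/2$ and leaving enough room for $\lambda$ up to $1/2$.

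The hard part will be making $q$ actually fall within $\delta$ of the simplex rather than merely within distance $R+\delta'$: the errors $\|q_0-p^*\|\le R$ and $\|\tilde q-(\text{face point})\|\le\delta'$ together displace $q$ from the target point. The plan is to exploit the full $k$-dimensional width of the simplex $\conv(0,z_1,\dots,z_k)$ in $L_k$ so that $q$ actually lies in the interior of the simplex (distance $0$) rather than close but outside. Since the tangential components of $z_t$ inside $L_{t-1}$ are not controlled by the hypothesis, the main technical challenge is to show that $M$ can be chosen uniformly over all admissible $z_1,\dots,z_k$; the plan is to bound the coefficients of a displacement of norm $\le R+\delta'$ in the basis $z_1,\dots,z_k$ by using the fact that the Gram--Schmidt diagonal entries of this basis are all $\ge M$, so that by taking $M$ large enough the induced coefficient errors are dominated by the slack $1-\sum\mu_t-\lambda$ left in the simplex.
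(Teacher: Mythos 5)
Your setup is right (reduce to $H=\R^n$, induct on $k$, use Lemma~\ref{lem:integer-point}), but the way you combine the two integer points $\tilde q$ and $q_0$ has a genuine gap that cannot be fixed by choosing $M$ large.

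The obstruction is that Lemma~\ref{lem:integer-point} only gives $\|q_0 - p^*\|\le R$, where $R$ is a \emph{fixed} constant depending on $L_k$ and $\delta$ but not on $M$. So the integer point $q=\tilde q+q_0$ is displaced from your target $f'+p^*$ by a vector $e$ with $\|e\|$ of order $R$, and you must argue that the simplex swallows a displacement of this size. Your plan --- bound the $z_t$-coefficients of $e$ by $O(R/M)$ using the fact that the Gram--Schmidt diagonal entries of $z_1,\dots,z_k$ are all $\ge M$ --- is not correct. The Gram--Schmidt bound only controls the \emph{orthogonalized} coefficients of $e$; to convert back to $z_t$-coefficients you must invert an upper-triangular matrix whose off-diagonal entries (the tangential components of $z_t$ inside $L_{t-1}$) are not controlled by the hypotheses. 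Concretely, take $k=2$, $L_2\cong\R^2$, $z_1=(M,0)$, $z_2=(CM,M)$ (so $\dist(z_2,L_1)=M$, as required), and $e=(0,-r)$ with $r\le R$. Then $e=\epsilon_1 z_1+\epsilon_2 z_2$ with $\epsilon_1=rC/M$, $\epsilon_2=-r/M$, and $\epsilon_1\to\infty$ as $C\to\infty$. Since $C$ is a free parameter not bounded by the hypotheses, for every fixed $M$ there are admissible simplices from which a displacement of norm $R$ escapes, no matter how the slack is allocated. The quantifier order in the lemma ($M$ chosen \emph{before} the $y_t$) makes this fatal.

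The paper's proof sidesteps this by never letting the $R$-sized error enter the final distance bound. Let $v$ be the unit vector in $L_k\cap L_{k-1}^\bot$ pointing towards $y_k$, set $z=x+Rv$, and let $u\in\Z^n$ be the integer point from Lemma~\ref{lem:integer-point} near $z$, with projection $\tilde u$ onto $L_k$. Rather than adding $u$ to another integer point, the paper \emph{slices} the simplex with the affine subspace $\tilde u+L_{k-1}$. Since $\dist(\tilde u,x+L_{k-1})\le 2R$ and $M\ge 4R$, the slice lies in the lower half of the simplex and retains at least half of each width $\dist(y_t,x+L_{t-1})$. Projecting this slice onto the integer translate $u+L_{k-1}$ moves each point by exactly $\|u-\tilde u\|\le\delta/2$, and the inductive hypothesis (applied in $u+L_{k-1}$, a translate of $L_{k-1}$ by an integer vector, so $\Z^n$ is preserved) produces the desired integer point directly. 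The total error is $\delta/2 + \delta/2 = \delta$, with $R$ appearing only in the choice of slicing height, never as a displacement. This slicing maneuver is the key idea your plan is missing.
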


\begin{proof}
We first observe that it is enough to show the result for $H=\R^n$. Indeed, if $H$ is a $d$-dimensional rational subspace of $\R^n$ with $d<n$, we can apply a unimodular transformation mapping $H$ to $\R^d\times\{0\}^{n-d}$ to reduce ourselves to the case in which $H$ coincides with the ambient space.
As in the proof of Lemma~\ref{lem:integer-point}, there exist positive constants $c_1\le c_2$ (depending only on the transformation) such that the distance between any two points is scaled by a factor between $c_1$ and $c_2$, and this is enough to conclude.

Therefore in the following we assume $H=\R^n$.
We proceed by induction on $k$.

\paragraph{\sc Base step}
The case $k=0$ is trivial. Here we assume $k=1$. Apply Lemma~\ref{lem:integer-point} with $L=L_1$ and $\delta=\delta$, and define $M=2R$. Pick $y \in x+L_1$ at distance at least $M$ from $x$. Because of Lemma~\ref{lem:integer-point}, we know that there exists an integer point $z$ at distance at most $\delta$ from $L_1$ and at most $M/2$ from the middle point of $x$ and $y$. The latter condition implies that the orthogonal projection of $z$ onto $L_1$ lies between $x$ and $y$, hence $z$ is at distance at most $\delta$ from $\conv(x,y)$.

\paragraph{\sc Inductive step}
We prove the lemma when subspaces $L_0\subseteq L_1 \subseteq \dots \subseteq L_k$ are given ($k\ge2$), assuming that the result holds for shorter sequences of subspaces.

By Lemma~\ref{lem:integer-point}, there exists $R>0$ such that, for every $z\in L_k$, there is an integer point $u$ satisfying $\|u-z\|\le R$ and $\dist(u,L_k)\le\delta/2$.

By induction, there exists $M'>0$ such that if $x'\in L_{k-1}$ and $y_1,\dots,y_{k-1}$ satisfy $y_t\in x'+L_t$ and $\dist(y_t,x'+L_{t-1})\ge M'$ for $t=1,\dots,k-1$, then $\dist(\conv(x',y_1,\dots,y_{k-1}), \Z^n)\leq \delta/2$.

We show that the result holds if we take $M=\max\{2M',4R\}$ (see Fig.~\ref{fig:very-far-things} to follow the proof).
So fix $x\in L_k$ and $y_1,\dots,y_k$ satisfying $y_t\in x+L_t$ and $\dist(y_t,x+L_{t-1})\ge M$ for $t=1,\dots,k$.
Let $v$ be the unit-norm vector in $L_k\cap L_{k-1}^\bot$ such that $y_k \in x+ L_{k-1}+ \alpha v$ for some $\alpha \geq 0$ (in words: with respect to $x+L_{k-1}$, the point $y_k$ lies on the side pointed by $v$). If we define $z=x+Rv\in L_k$, then there exists an integer point $u$ such that $\|u-z\|\le R$ and $\dist(u,L_k)\le\delta/2$. Let $\tilde u$ be the orthogonal projection of $u$ onto $L_k$. Note that $\dist(\tilde u,x+L_{k-1})\le 2R$.

Define $\tilde x$ as the unique point in $[x,y_k]\cap(\tilde u+L_{k-1})$; for $t=1,\dots,k-1$, define $\tilde y_t$ as the unique point in $[y_t,y_k]\cap(\tilde u+L_{k-1})$. Since
\[\dist(y_k,x+L_{k-1})\ge M\ge 4R\ge2\dist(\tilde u,x+L_{k-1})\]
and since $y_t\in x+L_{k-1}$ for $t=1,\dots,k-1$, we have, for $t=1,\dots,k-1$,
\[\dist(\tilde y_t,\tilde x+L_{t-1})\ge\frac12\dist(y_t,x+L_{t-1})\ge \frac M2\ge M'.\]

Now define $x',y'_1,\dots,y'_{k-1}$ as the points obtained by projecting $\tilde x,\tilde y_1,\dots,\tilde y_{k-1}$ orthogonally onto $u+L_{k-1}$, which is a translation of $L_{k-1}$ by an integer vector. Note that the points $x',y'_1,\dots,y'_{k-1}$ are obtained by translating $\tilde x,\tilde y_1,\dots,\tilde y_{k-1}$ by the vector $u-\tilde u$, whose norm is at most $\delta/2$. Since we still have
\[\dist(y'_t,x'+L_{t-1})\ge M', \quad t=1,\dots,k-1,\]
by induction we obtain an integer point $\bar z$ such that $\dist(\conv(x',y'_1,\dots,y'_{k-1}),\bar z)\le\delta/2$. Then
\begin{align*}
\dist(\conv(x,y_1\,\dots,y_k),\bar z) & \leq  \dist(\conv(\tilde x,\tilde y_1\,\dots,\tilde y_{k-1}),\bar z) \\
& \leq  \dist(\conv(x',y_1'\,\dots,y_{k-1}'),\bar z) + \delta/2 \\
& \leq  \delta,
\end{align*}
where the first inequality holds because $\conv(\tilde x,\tilde y_1\,\dots,\tilde y_{k-1})\subseteq\conv(x,y_1\,\dots,y_k)$
This concludes the proof.
\end{proof}

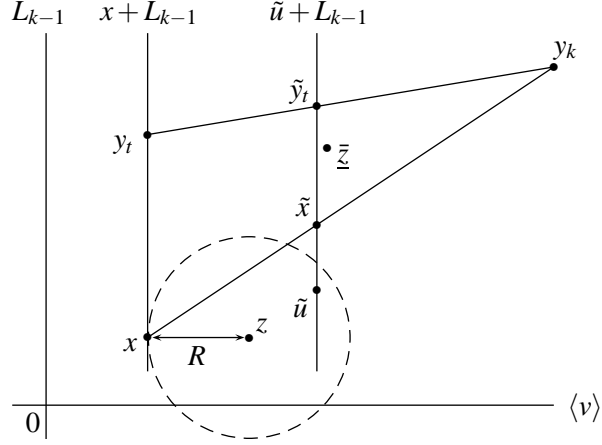
\begin{figure}
\begin{center}
\psset{unit=9mm}
\psset{linewidth=.5pt}
\begin{pspicture}(8.7,6.6)
\psline(0,0.5)(8,0.5)
\psline(0.5,0)(0.5,6)
\put(0.2,0.1){0}
\put(8.2,0.35){$\langle v\rangle$}
\put(0,6.2){$L_{k-1}$}
\put(2,1.5){\circle*{.1}}
\put(1.65,1.3){$x$}
\psline(2,1)(2,6)
\put(1.3,6.2){$x+L_{k-1}$}
\put(3.5,1.5){\circle*{.1}}
\put(3.6,1.6){$z$}
\psline{<->}(2.07,1.5)(3.43,1.5)
\put(2.6,1.1){$R$}
\pscircle[linestyle=dashed](3.5,1.5){1.5}
\put(4.5,2.2){\circle*{.1}}
\put(4.15,1.8){$\tilde u$}
\psline(4.5,1)(4.5,6)
\put(3.8,6.2){$\tilde u+L_{k-1}$}
\put(2,4.5){\circle*{.1}}
\put(1.5,4.3){$y_t$}
\put(8,5.5){\circle*{.1}}
\put(8,5.7){$y_k$}
\psline(2,4.5)(8,5.5)(2,1.5)
\put(4.5,4.9167){\circle*{.1}}
\put(4.1,5.05){$\tilde y_t$}
\put(4.5,3.1667){\circle*{.1}}
\put(4.2,3.3){$\tilde x$}
\put(4.65,4.3){\circle*{.1}}
\put(4.8,4.1){$\underline{\bar z}$}
\end{pspicture}
\caption{Illustration of the inductive step in the proof of Lemma~\ref{lem:very-far-things}. The space $L_k$ is represented. Symbol $\bar z$ is underlined to indicate that $\bar z$ does not necessarily belong to $L_k$; in other words, its orthogonal projection onto $L_k$ is represented. Points $x'$, $y'_t$ and $u'$ are not depicted; however, they project down to $\tilde x$, $\tilde y_t$ and $\tilde u$ respectively, and their distance from the corresponding projected point is at most $\delta/2$.}
\label{fig:very-far-things}
\end{center}
\end{figure}

\section{Proof of sufficiency}\label{sec:suff}

In this section we prove that if $F$ and $L$ satisfying conditions (i)--(ii) of Theorem~\ref{th:main} exist, then $P$ has infinite reverse split rank.

By hypothesis, $F$ and $P$ are nonempty.
Since $L$ is a rational subspace, it admits a basis $v^1,\dots,v^k \in \Z^n$.
Fix $\bar x \in \relint F$, and for $\lambda \ge 0$ define the polyhedra
\[Q_F^\lambda=\clconv(F, \bar x \pm \lambda v^1, \dots, \bar x \pm \lambda v^k),\quad
Q_P^\lambda=\clconv(P, \bar x \pm \lambda v^1, \dots, \bar x \pm \lambda v^k),\]
where $\clconv$ denotes the closed convex hull.
Clearly $Q_F^\lambda \subseteq Q_P^\lambda$ for every $\lambda \ge 0$.
As $\bar x \in \relint F$ and $F+L$ is relatively lattice-free, it follows that $Q_F^\lambda$ is a relaxation of $F$ for every $\lambda \ge 0$.
We now show that also $Q_P^\lambda$ is a relaxation of $P$ for every $\lambda \ge 0$.


%

\begin{myclaim}\label{claim:relax}
$Q_P^\lambda$ is a relaxation of $P$ for every $\lambda \ge 0$.
\end{myclaim}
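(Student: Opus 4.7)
The plan is to show that every integer point of $Q_P^\lambda$ lies in $P$; since $P$ is integral and $P \subseteq Q_P^\lambda$, this yields $\conv(Q_P^\lambda \cap \Z^n) = P$. My approach is to take an arbitrary $z \in Q_P^\lambda \cap \Z^n$, decompose it via the construction of $Q_P^\lambda$ as a closed convex hull, and then show that either $z \in P$ outright or $z$ lies in the relative interior of $G + L$ for some face $G$ of $P$ containing $F$, the latter contradicting hypothesis~(ii) of Theorem~\ref{th:main}.

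First I would write any $z \in Q_P^\lambda$ in the form $z = \mu p + (1-\mu)(\bar x + w) + r$, where $p \in P$, $w$ is a convex combination of the vectors $\pm\lambda v^i$ (so $w \in L$, since $v^1,\dots,v^k$ span $L$), $\mu \in [0,1]$, and $r \in \rec P$. Such a representation is available because $\conv\{\bar x \pm \lambda v^i\}$ is a compact polytope and the recession cone of $Q_P^\lambda$ coincides with $\rec P$. If $\mu = 1$, then $z = p + r \in P$ and we are done; otherwise $\mu < 1$, and I set $y := \mu p + (1-\mu)\bar x + r \in P$, so that $z = y + (1-\mu) w$ with $(1-\mu) w \in L$.

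The crucial step is to identify the smallest face $G$ of $P$ containing $y$ and show $F \subseteq G$. For any facet-defining inequality $a x \le b$ of $P$ satisfied with equality at $y$, the identity $\mu\, a p + (1-\mu)\, a \bar x + a r = b$ together with $a p \le b$, $a \bar x \le b$, and $a r \le 0$ forces $a \bar x = b$. Since $\bar x \in \relint F$ and $a x \le b$ is valid on $P$, the linear functional $a$ attains its maximum over $P$ at a relative interior point of $F$, hence must be constantly equal to $b$ on all of $F$. Therefore every facet of $P$ containing $y$ also contains $F$, so $G \supseteq F$; by minimality of $G$, one has $y \in \relint G$.

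Finally, using the identity $\relint(G + L) = \relint G + L$, which holds because $\relint L = L$ for a linear subspace, I conclude $z = y + (1-\mu) w \in \relint G + L = \relint(G + L)$. By hypothesis~(ii), $G + L$ is relatively lattice-free, so $z \notin \Z^n$, contradicting the choice of $z$. The step I expect to be the main obstacle is the initial decomposition: one must justify the form $z = \mu p + (1-\mu)(\bar x + w) + r$ and then carefully track the recession term $r$ through the face-identification argument when $P$ is unbounded; once that is in place, the remainder reduces to standard polyhedral facts about faces, relative interiors, and Minkowski sums.
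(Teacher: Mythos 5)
Your proof is correct, but it takes a genuinely different route from the paper's. The paper assumes $z\in Q_P^\lambda\cap\Z^n$ with $z\notin P$, takes the minimal face $G'$ of $P+L$ containing $z$, writes $G'=G+L$ for a face $G$ of $P$, and applies condition~(ii) in \emph{contrapositive}: since $z\in\relint(G+L)\cap\Z^n$, the face $G$ cannot contain $F$, hence $\bar x\notin G$. It then observes that the points $\bar x\pm\lambda v^i$ all strictly satisfy the inequality $ax\le\beta$ defining $G'$, while $az=\beta$, so $z$ must be a convex combination of points of $P$ alone, forcing $z\in P$, a contradiction. You instead decompose $z$ explicitly through the Minkowski--Weyl structure of $Q_P^\lambda$, isolate the point $y=\mu p+(1-\mu)\bar x+r\in P$, and show by a direct facet computation that the minimal face $G$ of $P$ containing $y$ \emph{does} contain $F$ (every facet inequality tight at $y$ is forced to be tight at $\bar x\in\relint F$, hence on all of $F$). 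You then apply (ii) in the \emph{positive} direction: $G+L$ is relatively lattice-free, yet $z\in\relint G+L=\relint(G+L)$ is an integer point, a contradiction. The two arguments identify different faces $G$ (one not containing $F$, the other containing $F$) and close from opposite sides of condition~(ii). Your version avoids having to invoke the correspondence between faces of $P$ and faces of $P+L$ (a standard but nontrivial step the paper leaves implicit), at the cost of a more hands-on computation with facet inequalities and a little care in justifying the decomposition $z=\mu p+(1-\mu)(\bar x+w)+r$; both are sound, and the underlying geometry is the same.
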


\begin{proof}
Fix $\lambda\ge0$ and assume by contradiction that $Q_P^\lambda$ contains an integer point $z\notin P$. Since $Q_P^\lambda\subseteq P+L$, $z\in P+L$. Let $G'$ be a minimal face of $P+L$ containing $z$; thus $z\in\relint G'$. If $ax\le\beta$ is an inequality that defines face $G'$ of $P+L$, then $ax\le\beta$ defines a face $G$ of $P$ such that $G'=G+L$. Then $z$ is an integer point in $\relint (G +L)$, thus $G+L$ is not relatively lattice-free. By condition~(ii) of Theorem~\ref{th:main}, this implies that $G$ is a face of $P$ not containing $F$, and thus $\bar x\notin G$. Recall that $z\in Q_P^\lambda=\clconv(P, \bar x \pm \lambda v^1, \dots, \bar x \pm \lambda v^k)$. Note that all points in $P\cup\{\bar x \pm \lambda v^1, \dots, \bar x \pm \lambda v^k\}$ satisfy $ax\le\beta$, and every point of the form $\bar x \pm \lambda v^i,i=1,\dots,k$ satisfies $ax<\beta$, as $\bar x\notin G$. Since $az=\beta$, it follows that $z$ is a convex combination of points in $P$. Then $z\in P$, a contradiction.

\end{proof}

Let $r>0$ be the radius of the largest ball in $\aff F$ centered at $\bar x$ and contained in $F$.
Clearly $r$ is finite, because otherwise $F = \aff F$ and so $F+L$ would be an integral affine subspace of $\R^n$, thus not relatively lattice-free, contradicting condition~(ii) of Theorem~\ref{th:main}.

Since $F$ is an integral polyhedron, it can be written in the form
\begin{equation} \label{eq: dec}
F = \conv\{g^1,\dots,g^p\} + \cone \{h^1,\dots,h^q\},
\end{equation}
where $g^i$, $i=1,\dots,p$, are integer points (one in each minimal face of $F$), and $h^i$, $i=1,\dots,q$, are integer vectors; here $p\ge1$ and $q\ge0$.
Let
$$R^g = \max \{\|\bar x - g^i \|: i=1,\dots,p \}, \quad R^h = \max \{0,2\|h^i \|: i=1,\dots,q \}.$$
(Note that the 0 in the latter definition makes $R^h$ well defined.)
Both $R^g$ and $R^h$ are finite, and so is $R = \max\{R^g,R^h\}$.

We will show below that, for each $\lambda\ge1$, $SC(Q_F^\lambda)$ contains the two points
\begin{equation}\label{eq:2pts}
\bar x \pm \min\left\{(\lambda-1),\frac{r}{2(r+R)} \lambda\right\}  v^i
\end{equation}
for every $i=1,\dots,k$.
As $Q_F^\lambda \subseteq Q_P^\lambda$, we have $SC(Q_F^\lambda) \subseteq SC(Q_P^\lambda)$.
As $\lambda$ was chosen arbitrarily, and at least one of the $2k$ points in \eqref{eq:2pts} is not in $P$ for $\lambda$ large enough (because $L$ is not contained in $\lin P$), this implies that $P\subsetneq SC(Q_P^\lambda)$, i.e., $s(Q_P^\lambda)>1$. If $\lambda$ is large then the argument can be iterated, showing that $s(Q_P^\lambda)\to +\infty$ as $\lambda\to +\infty$, hence $s^*(P)=+\infty$.\smallskip

It remains to prove that $SC(Q_F^\lambda)$ contains the two points given in \eqref{eq:2pts} for every $i=1,\dots,k$.
To do so, we prove that for every split $S$, the set $\conv(Q_F^\lambda \sm \int S)$ contains the two points $\bar x \pm (\lambda-1) v^i$ or the two points $\bar x \pm \frac{\lambda r}{2(r+R)} v^i$, for every $i=1,\dots,k$.
Note that the lineality space of every minimal face of $Q_F^\lambda$ is $\lin F = \lin P$, thus for every split $S$ that satisfies $\lin P\nsubseteq \lin S$, we have $\conv (Q_F^\lambda \sm \int S) = Q_F^\lambda$.
Hence we now consider only splits with $\lin P \subseteq \lin S$.
To simplify notation, for fixed $S$ and $\lambda$ we define $T=\conv(Q_F^\lambda \sm \int S)$, omitting the dependence on $S$ and $\lambda$.

\paragraph{Case 1. Let $S$ be a split such that there exists a vector $\bar v \in \{v^1,\dots,v^k\}$ not in $\lin S$.}
In this case we show that $T$ contains the point $\bar x + (\lambda-1) v^i$ for every $i = 1,\dots,k$.
Symmetrically, $T$ will also contain the point $\bar x - (\lambda-1) v^i$ for every $i=1,\dots,k$.

Let $i\in \{1,\dots,k\}$ be such that $v^i \notin \lin S$.
As $v^i \in \Z^n$, it is easy to check that $\int S$ can contain at most one of the points $\bar x + \lambda v^i$ and $\bar x + (\lambda-1) v^i$.
Thus $T$ contains the point $\bar x + \lambda v^i$ or the point $\bar x + (\lambda-1) v^i$.
If $T$ contains $\bar x + \lambda v^i$, then it also contains $\bar x + (\lambda-1) v^i$, since the latter can be written as a convex combination of the points $\bar x + \lambda v^i$ and $\bar x$, which are both in $T$.

Now let $i\in \{1,\dots,k\}$ be such that $v^i \in \lin S$.
If $\bar x +(\lambda-1)v^i \notin \int S$ we are done, thus we assume that $\bar x +(\lambda-1)v^i \in \int S$.
Since the three points $\bar x +\lambda v^i$, $\bar x \pm \lambda \bar v$ are in $Q_F^\lambda$, also their convex combinations $\bar x +(\lambda-1)v^i\pm\bar v$ are in $Q_F^\lambda$.
As $\bar v \in \Z^n$, $\bar v \notin \lin S$, and $\bar x +(\lambda-1)v^i \in \int S$, both points $\bar x +(\lambda-1)v^i\pm \bar v$ are not in $\int S$, and thus are in $T$.
Therefore also their convex combination $\bar x +(\lambda-1)v^i$ is in $T$.

\paragraph{Case 2. Let $S$ be a split such that $v^i \in \lin S$ for every $i =1,\dots,k$.}
In this case we show that $T$ contains the point $\bar x + \frac{\lambda r}{2(r+R)} v^i$ for every $i =1,\dots,k$.
Symmetrically, $T$ will also contain the point $\bar x - \frac{\lambda r}{2(r+R)} v^i$ for every $i=1,\dots,k$.

Let $\tilde v \in \{v^1,\dots,v^k\}$.
If $\bar x \notin \int S$, then also $\bar x+ \lambda \tilde v\notin S$, and the statement follows trivially, as $\bar x \in F$.
Thus we now assume that $\bar x \in \int S$, which implies that also $\bar x + \lambda \tilde v \in \int S$.

Since, by (i), $\relint(F+ L)$ is not contained in $\int S$, and since $F \cap \int S \neq \varnothing$, $F+ L$ is not contained in $S$.
As $v^i \in \lin S$ for every $i=1,\dots,k$, this implies that $F$ is not contained in $S$.
Therefore wlog $a x  \ge \beta$ is not valid for $F$, where $a \in \Z^n$, $\beta \in \Z$ are such that $S = \{x \in \R^n : \beta \le ax \le \beta+1\}$.
Since $F$ is integral, there exists a point in $F$ that satisfies $a x  \le \beta -1$.
By~\eqref{eq: dec}, such point can be written as
$$\sum_{i=1}^p\lambda^ig^i+\sum_{i=1}^q\mu^ih^i,$$
for nonnegative scalars $\lambda^1,\dots,\lambda^p$ and $\mu^1,\dots,\mu^q$ with $\sum_{i=1}^p\lambda^i=1$.

We now define a special point $w$ in $F$ that satisfies the inequality $a x  \le \beta -1$ and that is distant at most $R$ from $\bar x$.
If there exists a point in $\{g^1,\dots,g^p\}$ that satisfies $a x  \le \beta -1$, let $w$ be such point.
Otherwise, for every $i=1,\dots,p$, the integral point $g^i$ satisfies $a x  \ge \beta$, and so does the convex combination $\sum_{i=1}^p\lambda^ig^i$.
Therefore the scalar product of vectors $\sum_{i=1}^q\mu^ih^i$ and $a$ is strictly negative, implying that there exists a vector $h \in \{h^1,\dots,h^q\}$ such that the scalar product of $h$ and $a$ is strictly negative.
Define $w=\bar x+2h$.
As $h$ is in the recession cone of $F$, it follows that $w$ is in $F$.
Moreover, since $h$ is integral, $a w  \le \beta -1$.

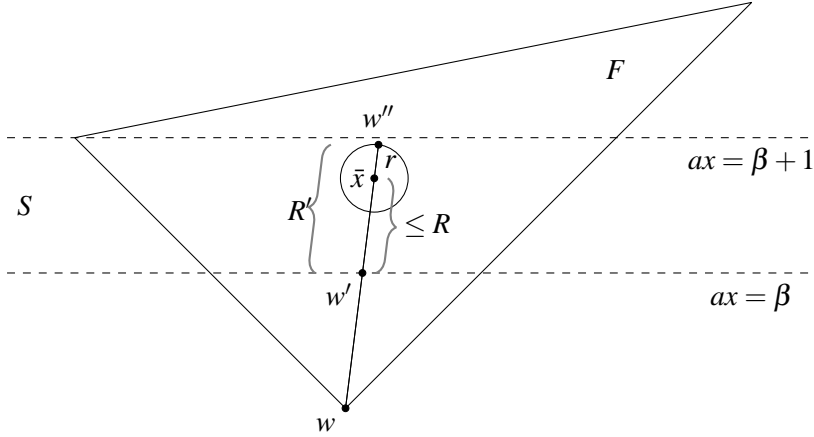
\begin{figure}[ht]
  \centering
\begin{tikzpicture}[scale=0.9,line join=bevel]
\coordinate (s1) at (-1,2);
\coordinate (s2) at (11,2);
\coordinate (s3) at (-1,4);
\coordinate (s4) at (11,4);
\coordinate (h1) at (-1,3);
\coordinate (h2) at (11,3);
\draw[style= dashed, name path=split] (s1) -- (s2);
\draw[style= dashed] (s3) -- (s4);
\node[right] at (h1) {$S$};
\node[below] at ([xshift=-1cm]s2) {$ax= \beta$};
\node[below] at ([xshift=-1cm]s4) {$ax= \beta+1$};
\coordinate (a) at (4,0);
\coordinate (b) at (10,6);
\coordinate (c) at (0,4);
\draw (a) -- (b) -- (c) -- cycle;
\node at ([xshift=-2cm,yshift=-1cm]b) {$F$};
\filldraw (a) circle [radius=0.05];
\node[below left] at (a) {$w$};

\path[name path=half] (h1) -- (h2);
\path[name path=barxy] ([yshift=0.4cm]h1) -- ([yshift=0.4cm]h2);
\path[name path=diag] (a) -- (4.5,4);

\path[name intersections={of=diag and split,by={wone}}];
\filldraw (wone) circle [radius=0.05];
\node[below left] at (wone) {$w'$};

\path[name intersections={of=diag and barxy,by={barx}}];
\filldraw (barx) circle [radius=0.05];
\draw[name path=cir] (barx) circle [radius=0.5];
\node[left] at (barx) {$\bar x$};

\path[name intersections={of=diag and cir}];
\filldraw (intersection-1) circle [radius=0.05];
\node[above] at ([yshift=0.1cm]intersection-1) {$w''$};
\node[above right] at (barx) {$r$};
\draw[fill] (a) -- (intersection-1);
\draw[decoration={brace,mirror,amplitude=0.5em},decorate,thick,gray] ([xshift=0.2cm]wone) --  ([xshift=0.2cm]barx) node [black,midway,xshift=0.6cm] {$\le R$};
\draw[decoration={brace,amplitude=0.5em},decorate,thick,gray] ([xshift=-0.7cm]wone) --  ([xshift=-0.7cm]intersection-1) node [black,midway,xshift=-0.3cm] {$R'$};
\end{tikzpicture}
  \caption{Illustration of Case 2.}
    \label{fig: suff}
\end{figure}

Since $a w  \le \beta-1$, and $a \bar x  > \beta$, we can define $w'$ as the unique point in the intersection of the hyperplane $\{x\in\R^n : ax = \beta\}$ with the segment $[w,\bar x]$.
(See Fig.~\ref{fig: suff}.)
As $a \bar x < \beta+1$, it follows that the segment $[w,\bar x + \lambda \tilde v] \subseteq Q_F^\lambda$ contains a point $w' + \lambda' \tilde v$, with $\lambda' > \frac{\lambda}{2}$.
Thus the point $w' + \frac{\lambda}{2} \tilde v$ is in $Q_F^\lambda$, and in $T$.

We finally show that $T$ contains the point $\bar x + \frac{\lambda r}{2(r+R)} \tilde v$.
Let $w''$ be the intersection point of the line $\aff\{w,\bar x\}$ with the boundary of $B(\bar x,r)$ that does not lie in the segment $[w,\bar x]$.
The point $w''$ is in $F$, and the distance between $\bar x$ and $w''$ is $r$.
Let $R'$ be the distance between $w'$ and $w''$, and note that $r<R' \le R+r$.
Both points $w' + \lambda /2 \tilde v$ and $w''$ are in $T$, thus also is their convex combination $\frac{r}{R'}\left(w' + \frac{\lambda}{2} \tilde v\right) + \frac{R'-r}{R'}w'' = \bar x + \frac{\lambda r}{2R'} \tilde v$. As $R' \le R+r$, the point $\bar x + \frac{\lambda r}{2(r+R)} \tilde v$ is a convex combination of the latter point and $\bar x$, implying that $\bar x + \frac{\lambda r}{2(r+R)} \tilde v \in T$.

\section{Proof of necessity for bounded polyhedra}\label{sec:nec}

In this section we prove that if an integral polytope $P$ has infinite reverse split rank, then $F$ and $L$ satisfying conditions (i)--(ii) of Theorem~\ref{th:main} exist, while the case of an unbounded polyhedron will be considered in Sect.~\ref{sec:nec-unbounded}. We remark that if $P=\varnothing$ then its reverse split rank is finite, as this is the case even for the reverse CG rank (see \cite{CCT,ipco}). Therefore in this section we assume that $P\ne\varnothing$.
Also, we recall that for a polytope the condition $L\not\subseteq\lin P$ is equivalent to $L\ne\{0\}$.

In order to prove the necessity of conditions (i)--(ii), we need to extend the notion of relaxation and reverse split rank to {\em rational} polyhedra.
Indeed, when dealing with a non-full-dimensional integral polytope $P$ in Sect.~\ref{sec:non-full}, we will approximate $P$ with a non-integral full-dimensional polytope containing the same integer points as $P$.

Given a rational polyhedron $P\subseteq\R^n$, a {\em relaxation} of $P$ is a rational polyhedron $Q\subseteq\R^n$ such that $P\subseteq Q$ and $P\cap\Z^n=Q\cap\Z^n$. The reverse split rank of a rational polyhedron $P$ is defined as follows:
\[s^*(P)=\sup\{s(Q): \mbox{$Q$ is a relaxation of $P$}\}.\]

In the following we prove that if a nonempty rational polytope has infinite reverse split rank, then $F$ and $L$ satisfying conditions (i)--(ii) of Theorem~\ref{th:main} exist.

\subsection{Outline of the proof for full-dimensional polytopes}\label{sec:nec-outline}

Given a full-dimensional rational polytope $P\subseteq\R^n$ with $s^*(P)=+\infty$, we prove conditions (i)--(ii) of Theorem~\ref{th:main} under the assumption that the result holds for all ({\em possibly non-full-dimensional}) rational polytopes in $\R^{n-1}$.
(The case of a non-full-dimensional polytope in $\R^n$ will be treated in Sect.~\ref{sec:non-full}.)
Note that the theorem holds for $n=1$, as in this case $s^*(P)$ is always finite.
We also remark that if $P$ is bounded then every relaxation of $P$ is bounded, as every rational polyhedron has the same recession cone as its integer hull.

So let $P\subseteq\R^n$ be a full-dimensional rational polytope with $s^*(P)=+\infty$.
We now give a procedure that returns $F$ and $L$ satisfying the conditions of the theorem. We justify it and prove its correctness in the rest of this section.
We remark that at this stage the linear subspace returned by the procedure might be non-rational, but we will show in Sect.~\ref{sec:rationality} how to choose a rational subspace.
Also, we point out that the procedure below is not an ``executable algorithm'', but only a theoretical proof of the existence of $F$ and $L$.

\begin{enumerate}
\item
Fix a point $\bar x\in\int P$; choose a sequence $(Q_i)_{i\in\N}$ of relaxations of $P$ with $\sup_i s(Q_i)=+\infty$; initialize $k=1$, $L_0=\{0\}$, and $S=P$.
\item
Choose a sequence of points $(x_i)_{i\in\N}$ such that $x_i\in Q_i$ for all $i\in\N$ and $\sup_i\dist(x_i,S)=+\infty$;
let $v_i$ be the projection of $x_i-\bar x$ onto $L_{k-1}^\bot$;
define $\bar v$ as the limit of some subsequence of the sequence $\left(\frac{v_i}{\|v_i\|}\right)_{i\in\N}$ and assume wlog that this subsequence coincides with the original sequence; define $L_k=\langle L_{k-1},\bar v\rangle$.
\item
If $P+L_k$ is not contained in any split, then return $F=P$ and $L=L_k$, and stop; otherwise,
let $S$ be a split such that $P+L_k\subseteq S$, where $S=\{x\in\R^n:\beta\le ax\le \beta+1\}$.
\item
If there exists $M\in\R$ such that $Q_i\subseteq\{x\in\R^n:\beta-M\le ax\le \beta+M\}$ for every $i\in\N$, then choose $j\in\{0,1\}$ such that $P^j:=P\cap\{x\in\R^n:ax=\beta+j\}$ has infinite reverse split rank when viewed as a polytope in the affine space $H=\{x\in\R^n:ax=\beta+j\}$; since $H$ is a rational subspace and we assumed that the result holds in dimension $n-1$, there exist $F$ and $L$ satisfying conditions (i)--(ii) of the theorem with respect to the space $H$; return $F$ and $L$, and stop.
Otherwise, if no $M$ as above exists, set $k\leftarrow k+1$ and go to 2.
\end{enumerate}

In order to prove the correctness of the above procedure, we will show the following:
\begin{enumerate}[(a)]
\item
in step~2, a sequence $(x_i)_{i\in\N}$ and a vector $\bar v$ as required can be found;
\item
the procedure terminates (either in step~3 or step~4);
\item
if the procedure terminates in step~4, then there exists $j\in\{0,1\}$ such that $P^j$ has infinite reverse split rank in the affine space $H=\{x\in\R^n:ax=\beta+j\}$, and the output is correct;
\item
if the procedure terminates in step~3, then the output is correct.
\end{enumerate}

\subsection{Proof of (a)}

We prove that at every execution of step~2 a sequence $(x_i)_{i\in\N}$ and a vector $\bar v$ as required can be found.

Consider first the iteration $k=1$; in this case, $S=P$. Since $\sup_i s(Q_i)=+\infty$, we also have $\sup_i r(Q_i)=+\infty$. By Corollary~\ref{cor:ub-chv} applied to the integral polytope $\conv(P\cap\Z^n)$, there is no bounded set containing every $Q_i$ for $i\in\N$. Then there is a sequence of points $(x_i)_{i\in\N}$ such that $x_i\in Q_i$ for every $i\in\N$ and $\sup_i\dist(x_i,P)=+\infty$.
As $L_0=\{0\}$, for $k=1$ the definition of $v_i$ given in step~2 reduces to $v_i=x_i-\bar x$ for $i\in\N$. Since every vector $\frac{v_i}{\|v_i\|}$ belongs to the unit sphere, which is a compact set, the sequence $\left(\frac{v_i}{\|v_i\|}\right)_{i\in\N}$ has a subsequence converging to some unit-norm vector $\bar v$.

Assume now that we are at the $k$-th iteration ($k\ge2$). Then the algorithm has determined a split $S\subseteq\R^n$ such that $P+L_{k-1}\subseteq S=\{x\in\R^n:\beta\le ax\le \beta+1\}$. Furthermore, we know that there is no $M\in\R$ such that $Q_i\subseteq\{x\in\R^n:\beta-M\le ax\le \beta+M\}$ for every $i\in\N$ (see step~4).
This implies that there is a sequence of points $(x_i)_{i\in\N}$ such that $x_i\in Q_i$ for $i\in\N$ and $\sup_i\dist(x_i,S)=+\infty$.
For $i\in\N$, let $v_i$ be the projection of the vector $x_i-\bar x$ onto the space $L_{k-1}^\bot$. Note that, for $i$ large enough, $x_i-\bar x\notin L_{k-1}$, as $\bar x+L_{k-1}\subseteq S$ and $\sup_i\dist(x_i,S)=+\infty$; thus $v_i\ne0$ for $i$ large enough.
Since the elements of the sequence $\left(\frac{v_i}{\|v_i\|}\right)_{i\in\N}$ belong to the intersection of $L_{k-1}^\bot$ with the unit sphere, and this intersection gives a compact set, there is a subsequence converging to some unit-norm vector belonging to $L_{k-1}^\bot$, which we call $\bar v$.

\subsection{Proof of (b)}

In order to show that the procedure terminates after a finite number of iterations, it is sufficient to observe that at every iteration in step~2 we select a vector $\bar v\in L_{k-1}^\bot$, thus the dimension of $L_k=\langle L_{k-1},\bar v\rangle$ is $k$.
In particular, the procedure terminates after at most $n$ iterations, as for $k=n$ no split $S$ can be found in step~3.

\subsection{Proof of (c)}\label{sec:c}

We now prove that if the procedure terminates in step~4, then there exists $j\in\{0,1\}$ such that $P^j$ has infinite reverse split rank when viewed as a polytope in the affine space $\{x\in\R^n:ax=\beta+j\}$, and the output is correct.

Since $Q_i\subseteq\{x\in\R^n:\beta-M\le ax\le \beta+M\}$ for every $i\in\N$, by Lemma~\ref{lem:ub-chv} there exists a number $N$ such that, for each $i\in\N$, $N$ iterations of the CG closure operator (hence, also of the split closure operator) applied to $Q_i$ are sufficient to obtain a relaxation of $P$ contained in $S$. For $i\in\N$, let $\widetilde Q_i$ be the relaxation of $P$ obtained this way. Then we have $\sup_i s(\widetilde Q_i)=+\infty$.

Recall that $P^0$ and $P^1$ are the faces of $P$ induced by equations $ax=\beta$ and $ax=\beta+1$, respectively. Similarly, for $i\in\N$, let $\widetilde Q^0_i$ and $\widetilde Q^1_i$ be the faces of $\widetilde Q_i$ induced by equations $ax=\beta$ and $ax=\beta+1$, respectively.
Since $\widetilde Q_i\subseteq S$, by Lemma~\ref{lem:split-faces} we have $s(\widetilde Q_i)\le \max\{s(\widetilde Q_i^0),s(\widetilde Q^1_i)\}+1$. Then there exists $j\in\{0,1\}$ such that $\sup_is(\widetilde Q_i^j)=+\infty$. Since every relaxation $\widetilde Q_i^j$ is contained in the affine space $H=\{x\in\R^n:ax=\beta+j\}$, we have $s^*(P^j)=+\infty$ with respect to the ambient space $H$ (which is equivalent to $\R^{n-1}$ under some unimodular transformation). Let $H^*$ be the translation of $H$ passing through the origin. Since $H$ is a rational space of dimension $n-1$, by induction there exist a face $F$ of $P^j$ and a nonzero linear subspace $L\subseteq H^*$ satisfying conditions (i)--(ii) of Theorem~\ref{th:main} for $P^j$: specifically, $\relint(F+L)$ is not contained in the interior of any $(n-1)$-dimensional split in the affine space $H$, and $G+L$ is relatively lattice-free for every face $G$ of $P^j$ containing $F$.

We show that $F$ and $L$ satisfy conditions (i)--(ii) for $P$, too. First, note that $F$ is a face of $P$ and $L$ is a nonzero linear subspace of $\R^n$.
To prove (i), assume by contradiction that there is an $n$-dimensional split $T$ such that $\relint(F+L)\subseteq\int T$. Then, as $F+L$ is contained in the boundary of $S$, we have $\lin T\ne\lin S$. This implies that $T\cap H$ is contained in some $(n-1)$-dimensional split $U$ living in $H$. But then, with respect to the ambient space $H$, we would have $\relint(F+L)\subseteq\int U$, a contradiction.

To prove (ii), let $G$ be a face of $P$ containing $F$. If $G\subseteq P^j$, then $G$ is a face of $P^j$ and thus $G+L$ is relatively lattice-free by induction. So we assume that $G\not\subseteq P^j$. Since $G\subseteq P\subseteq S$, this implies that $G$ contains some points in $\int S$, and thus $\relint G\subseteq\int S$. Since $L\subseteq\lin S$, this yields $\relint(G+L)\subseteq\int S$, hence $G+L$ is relatively lattice-free.

\subsection{Proof of (d)}\label{sec:d}

We now prove that if the procedure terminates in step~3, then the output is correct. Note that it is sufficient to prove that $P+L_k$ is lattice-free at every iteration of the algorithm.

The subspace $L_1$ is constructed following the same procedure as in the proof of Theorem~\ref{th:CG} given in \cite[Sect.~3.2]{ipco}. Therefore, with the same arguments as in \cite{ipco}, one proves that $P+L_1$ is lattice-free.

We now assume that $k\in\{2,\dots,n\}$. Recall that $L_k=\langle L_{k-1},\bar v\rangle$ and $\bar v\in L^\bot_{k-1}$.
Suppose by contradiction that there is an integer point $\bar z\in\int(P+L_k)=\int P+L_k$.
First of all we show that $\bar z$ can be taken sufficiently far from $P$ (we will specify later how far it should be taken).
To see this, choose any integer point $\bar z\in\int P+L_k$ and apply Lemma~\ref{lem:integer-point} to the affine subspace $\bar z+\langle\bar v\rangle$ (which is a translation of a linear subspace by an integer point), with $\delta$ small enough. This guarantees the existence of integer points in $\int P+L_k$ that lie arbitrarily far from $P$.

Since $\bar z\in\int P+L_k$, there exists a vector $u\in L_{k-1}$ such that $z_0:=\bar z+u\in\int P+\langle \bar v\rangle$. Let $x_0 \in \int P$ be such that $x_0=z_0-d \bar v$, where wlog we can assume $d>0$.
Note that since $\|\bar v\|=1$, $d=\|z_0-x_0\|$.
Let $r>0$ be such that $B(x_0,r)\subseteq P$. Furthermore, denote by $\pi:\R^n\to L_{k-1}^\bot$ the orthogonal projection onto $L_{k-1}^\bot$.

Recall that there is a split $S$ such that $P+L_{k-1}\subseteq S$ (step~3 of the previous iteration). Define $H=\lin S$ and $H_0=z_0+H$. By starting the above construction with a point $\bar z$ sufficiently far from $P$, we can assume wlog that $H_0$ does not intersect $P$.

We need the following lemma (recall that $x_0\in\int P$).

\begin{myclaim}\label{cl:limit}
For every $M'>0$ and $\varepsilon>0$, there exist an index $i\in\N$ and points $y_1,\dots,y_k$ satisfying $y_t\in Q_i\cap(x_0+L_t)$, $\dist(y_t,x_0+L_{t-1})\ge M'$ for $t=1,\dots,k$, and $\phi(\pi(y_k-x_0),\bar v)\le\varepsilon$.
\end{myclaim}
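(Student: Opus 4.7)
The plan is to construct the required $y_1,\dots,y_k$ one index at a time by forming, for each $t\in\{1,\dots,k\}$, a convex combination of a point $x_i^{(t)}$ produced at the $t$-th execution of step~2 with a carefully chosen point $b_t$ in a fixed ball around $x_0$. The convex combination will be designed to force $y_t-x_0$ to lie exactly in $L_t$ while retaining membership in $Q_i$ by convexity, and keeping the $L_{t-1}^{\perp}$ component as large as desired.

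First I fix a radius $r'>0$ with $B(x_0,r')\subseteq P\subseteq Q_i$ for all $i$ (possible since $x_0\in\operatorname{int} P$), and write $(x_i^{(t)})_{i\in\N}$ and $v_i^{(t)}:=\pi_{L_{t-1}^{\perp}}(x_i^{(t)}-\bar x)$ for the objects produced at the $t$-th execution of step~2, so that $x_i^{(t)}\in Q_i$, $\|v_i^{(t)}\|\to\infty$ and $v_i^{(t)}/\|v_i^{(t)}\|\to\bar v^{(t)}$, where $\bar v^{(t)}$ is the unit vector in $L_t\cap L_{t-1}^{\perp}$ picked at iteration $t$. Setting $p_i^{(t)}:=\pi_{L_t^{\perp}}(x_i^{(t)}-x_0)$, the preliminary estimate I need is $\|p_i^{(t)}\|=o(\|v_i^{(t)}\|)$: decomposing $x_i^{(t)}-x_0=(x_i^{(t)}-\bar x)+(\bar x-x_0)$, the second summand contributes a bounded amount, while the projection of $(x_i^{(t)}-\bar x)$ onto $L_t^{\perp}=L_{t-1}^{\perp}\cap\langle\bar v^{(t)}\rangle^{\perp}$ equals $v_i^{(t)}-(v_i^{(t)}\cdot\bar v^{(t)})\bar v^{(t)}$, whose squared norm is $\|v_i^{(t)}\|^2\bigl(1-((v_i^{(t)}/\|v_i^{(t)}\|)\cdot\bar v^{(t)})^2\bigr)=o(\|v_i^{(t)}\|^2)$ by the direction convergence.

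With this estimate in hand, for each $t$ and all sufficiently large $i$ I set $\lambda_t:=r'/(r'+\|p_i^{(t)}\|)\in(0,1]$, define $b_t:=x_0-\frac{\lambda_t}{1-\lambda_t}p_i^{(t)}$ (which lies on the boundary of $B(x_0,r')$, hence in $Q_i$), and put $y_t:=\lambda_t x_i^{(t)}+(1-\lambda_t)b_t$. A short calculation yields $y_t-x_0=\lambda_t\bigl((x_i^{(t)}-x_0)-p_i^{(t)}\bigr)\in L_t$, so $y_t\in Q_i\cap(x_0+L_t)$. The distance from $y_t$ to $x_0+L_{t-1}$ equals $\lambda_t\|\pi_{L_{t-1}^{\perp}}(q_i^{(t)})\|$ with $q_i^{(t)}:=(x_i^{(t)}-x_0)-p_i^{(t)}$; using $\pi_{L_{t-1}^{\perp}}(q_i^{(t)})=v_i^{(t)}+\pi_{L_{t-1}^{\perp}}(\bar x-x_0)-p_i^{(t)}$, the facts $\|v_i^{(t)}\|\to\infty$ and $\|p_i^{(t)}\|=o(\|v_i^{(t)}\|)$ force this distance to diverge regardless of whether $\|p_i^{(t)}\|$ stays bounded or grows, so it exceeds $M'$ once $i$ is large. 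For the angle condition at $t=k$, the vector $\pi(y_k-x_0)=\lambda_k\pi_{L_{k-1}^{\perp}}(q_i^{(k)})$ is dominated by $v_i^{(k)}$, whose direction tends to $\bar v=\bar v^{(k)}$, so $\phi(\pi(y_k-x_0),\bar v)\to 0$ and is at most $\varepsilon$ for $i$ large. Taking $i$ larger than all the finitely many thresholds thus produced delivers $y_1,\dots,y_k$ inside a common $Q_i$ satisfying every requirement.

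The main obstacle is precisely the estimate $\|p_i^{(t)}\|=o(\|v_i^{(t)}\|)$: it is what allows the ``correction'' point $b_t$ to fit inside the fixed ball $B(x_0,r')$ while still keeping $\lambda_t\|v_i^{(t)}\|\to+\infty$, which is what makes the convex combination both land exactly on $x_0+L_t$ and stay arbitrarily far in the $L_{t-1}^{\perp}$ direction. Everything else is either linear-algebraic bookkeeping or a passage-to-subsequence consistency check relying on the nesting of subsequences built up through iterations $1,\dots,k$ of step~2.
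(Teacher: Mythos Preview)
Your argument is correct and takes a genuinely different route from the paper's.  The paper proves the claim by induction on $k$: it assumes that at the previous iteration an index $i$ and points $y_1,\dots,y_{k-1}$ exist, argues that infinitely many such $i$ exist, and then constructs $y_k$ by a similar-triangles computation (projecting $x_i$ onto $x_0+L_{k-1}^\perp$, choosing a suitable point $x'$ on the boundary of a ball in $P$, and intersecting the segment $[x',w_i]$ with $x_0+\langle\bar v\rangle$).  You instead build \emph{all} of $y_1,\dots,y_k$ directly and simultaneously, one per iteration index $t$, each as a convex combination of $x_i^{(t)}$ with a correction point $b_t\in B(x_0,r')$ chosen so that the $L_t^\perp$-component cancels exactly.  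Your organizing estimate $\|p_i^{(t)}\|=o(\|v_i^{(t)}\|)$, which follows cleanly from the direction convergence $v_i^{(t)}/\|v_i^{(t)}\|\to\bar v^{(t)}$, is what guarantees that $b_t$ fits in the fixed ball while $\lambda_t\|v_i^{(t)}\|\to\infty$; the paper circumvents this estimate with its triangle construction and explicit lower bound \eqref{eq:M'}.  Your approach is more streamlined and avoids the induction altogether; the paper's is more pictorial but relies on the somewhat implicit step of passing from $z\in x_0+\langle\bar v\rangle$ back to a point $y_k\in Q_i\cap(x_0+L_k)$.  Both need the same consistency fact you note at the end: after the nested subsequence extractions through iterations $1,\dots,k$, one retains $x_i^{(t)}\in Q_i$ with $\|v_i^{(t)}\|\to\infty$ and direction convergence for every $t$, so a single sufficiently large $i$ works for all $t$ at once.
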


\begin{proof}
We proceed by induction on $k$: we assume that the property of the lemma holds when $k$ is replaced with $k-1$ (i.e., at the previous iteration of the algorithm), and show that it also holds at the current iteration. We remark that the base case $k=1$ does not need to be treated separately. (See Fig.~\ref{fig:main-base} to follow the proof.)

Fix any $M'>0$ and $\varepsilon>0$.
By induction, there exist an index $i$ and points $y_1,\dots,y_{k-1}$ such that $y_t\in Q_i\cap(x_0+L_t)$ and $\dist(y_t,x_0+L_{t-1})\ge M'$ for $t=1,\dots,k-1$. Note that the existence of such an index $i$ implies the existence of infinitely-many such indices. (To see this, one has to reapply the lemma with a sufficiently large $M'$: since $Q_i$ is bounded, if $M'$ is large enough then a different index $i'$ must exist.)
Now we need to find an additional point $y_k\in Q_i\cap(x+L_k)$ such that $\dist(y_k,x_0+L_{k-1})\ge M'$.

Let $r>0$ be such that $B(x_0,r)\subseteq P$ and define $d=\|x_0-\bar x\|$. Denote again by $\pi:\R^n\to L_{k-1}^\bot$ the orthogonal projection onto $L_{k-1}^\bot$ (recall that $\bar v\in L_{k-1}^\bot$).
The choices of the sequence $(x_i)_{i\in\N}$ and the vector $\bar v$ made in step~2 of the algorithm imply that, for $i\in\N$ large enough, the norm of $\pi(x_i-\bar x)$ can be made arbitrarily large and the angle $\phi(\pi(x_i-\bar x),\bar v)=\phi(v_i,\bar v)$ can be made arbitrarily small. Thus we can assume that
\begin{equation}\label{eq:D}
D:=\|\pi(x_i-\bar x)\|\ge\frac{r}{2(M'(r+d+1)+d)},
\end{equation}
\[\alpha:=\phi(\pi(x_i-\bar x),\bar v)=\phi(v_i,\bar v)\le\min\left\{\frac{\pi}{3},\,\arcsin\left(\frac{r}{2(M'(r+d+1)+d)}\right)\right\}.\]
By replacing $x_i$ with a suitable point in the line segment $[\bar x,x_i]$, we can assume that \eqref{eq:D} holds at equality: $D=\frac{2(M'(r+d+1)+d)}{r}$.

Let $w_i$ be the orthogonal projection of $x_i$ onto the affine space $x_0+L_{k-1}^\bot$.
We claim that there exists $x'\in B(x_0,r)\cap(x_0+L_k^\bot)$ such that $\|x'-x_0\|=r$ and $[x',w_i]\cap(x_0+\langle \bar v\rangle)$ contains a (single) point, which we call $z$. To see this, consider the set obtained as the convex hull of $x_i$ and $B(x_0,r)\cap(x_0+L_k^\bot)$. This set intersects the line $x_0+\langle\bar v\rangle$ in a segment. The point of this segment at maximum distance from $x_0$ is the point $x'$ satisfying the requirements.

Observe that the orthogonal projections of $x_0$ and $x'$ onto $w_i+\langle\bar v\rangle$ coincide; let us call $u$ this common projected point.
Then
\[\|u-x'\|\le r+d+D\sin\alpha\leq r+d+\frac{Dr}{2(M'(r+d+1)+d)}=r+d+1\]
and
\[\|u-w_i\|\ge D\cos\alpha-d\ge D/2-d.\]
Since the two triangles with vertices respectively $w_i,u,x'$ and $z,x_0,x'$ are similar, we deduce that
\begin{equation}\label{eq:M'}
\|z-x_0\|\ge \frac{D/2-d}{r+d+1}\cdot r\ge\frac{M'(r+d+1)}{r+d+1}=M'.
\end{equation}
The construction of $z$ shows that there exists $y_k\in Q_i\cap(x_0+L_k)$ whose projection onto $x_0+L_{k-1}^\bot$ is $z$, hence
\[\dist(y_k,x_0+L_{k-1})\ge \dist(z,x_0)\geq M'.\]

To show the last part of Claim~\ref{cl:limit}, note that if we choose $i$ sufficiently large then the angle $\phi(\pi(y_k-\bar x),\bar v)$ can be made arbitrarily small. Since the norm of $\pi(y_x-\bar x)$ can be made arbitrarily large, this implies that also the angle $\phi(\pi(y_k-x_0),\bar v)$ can be made arbitrarily small. (In other words, the angles $\phi(\pi(y_k-\bar x),\bar v)$ and $\phi(\pi(y_k-x_0),\bar v)$ are almost the same for large $i$.)
\end{proof}

\begin{figure}
\begin{center}
\psset{unit=9mm}
\psset{linewidth=.5pt}
\begin{pspicture}(13.8,5.3)
\put(0.5,1.5){\circle*{.1}}
\put(0.3,1.1){$\underline{\bar x}$}
\psline(0,1.5)(11.5,1.5)
\put(11.8,1.4){$\underline{\bar x}+\langle\bar v\rangle$}
\put(2,4){\circle*{.1}}
\put(2.1,3.7){$x_0$}
\psline(0,4)(11.5,4)
\put(11.8,3.9){$x_0+\langle\bar v\rangle$}
\put(2,4.8){\circle*{.1}}
\put(2.1,4.9){$x'$}
\psline{<->}(2,4.08)(2,4.72)
\put(1.7,4.3){$r$}
\put(10.5,0.5){\circle*{.1}}
\put(10.1,0.1){$w_i,\underline{x_i}$}
\psline[linestyle=dashed](0.5,1.5)(10.5,0.5)
\put(3.6,1.25){$\alpha$}
\psline(2,4.8)(10.5,0.5)
\put(3.55,4){\circle*{.1}}
\put(3.6,4.2){$z,\underline{y_k}$}
\psline[linestyle=dashed](2,4)(2,.5)
\psline(0,0.5)(11.5,0.5)
\put(11.8,0.4){$w_i+\langle\bar v\rangle$}
\put(2,.5){\circle*{.1}}
\put(1.9,.1){$u$}
\end{pspicture}
\caption{Illustration of the proof of Claim~\ref{cl:limit}. The affine space $x_0+L_{k-1}^\bot$ is represented. Underlined symbols indicate points that do not necessarily belong to $x_0+L_{k-1}^\bot$; in other words, their orthogonal projection onto $x_0+L_{k-1}^\bot$ is represented. Note that $x_i$ may also lie ``above'' the line $\bar x+\langle \bar v\rangle$ (and even ``above'' the line $x_0+\langle\bar v\rangle$).}
\label{fig:main-base}
\end{center}
\end{figure}
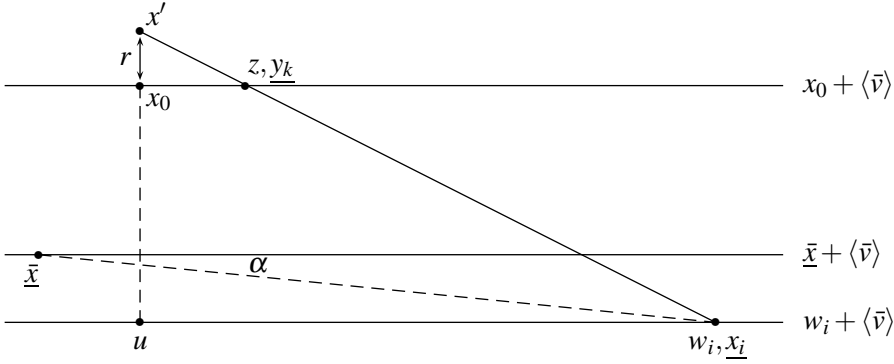

We first apply Lemma~\ref{lem:very-far-things} with $\delta=r/8$ and $k-1$ in place of $k$, and obtain $M>0$ such that the condition of the lemma is satisfied, i.e., for every $x\in L_{k-1}$ and for every $y_1,\dots,y_{k-1}$ satisfying $y_t\in x+L_t$ and $\dist(y_t,x+L_{t-1})\ge M$ for $t=1,\dots,k-1$, one has
$\dist(\conv(x,y_1,\dots,y_{k-1}), H\cap\Z^n)\leq \delta$. Define $M'=\max\{2M,2d\}$.
Now, by Claim~\ref{cl:limit}, there exist $i\in\N$ and points $y_1,\dots,y_k$ satisfying $y_t\in Q_i\cap(x_0+L_t)$ and $\dist(y_t,x_0+L_{t-1})\ge M'$ for $t=1,\dots,k$ (see Fig.~\ref{fig:lf}).
Now let $w$ denote the unit-norm vector which is orthogonal to $H$ and forms an acute angle with $\bar v$ (recall that $\bar v\notin H$, thus $\bar v$ and $w$ cannot be orthogonal) and define $\alpha=\phi(\bar v,w)$.
Again because of Claim~\ref{cl:limit}, we can enforce the condition
\begin{equation}\label{eq:beta}
\beta:=\phi(\pi(y_k-x_0),\bar v)\le\arctan\left(\tan\alpha+\frac r {8d}\right)-\alpha
\end{equation}
(see Fig.~\ref{fig:alpha}). Note that the value on the right-hand-side of \eqref{eq:beta} is nonnegative, as $0\le\alpha<\pi/2$.

For $\rho > 0$, define $B'(\rho)=B(0,\rho)\cap L_{k-1}^\bot\cap H$. For $t=1,\dots,k-1$, let $\tilde y_t$ be the midpoint of the segment $[x_0,y_t]$.
Note that
\[\begin{split}
Q_i & \supseteq\conv\left(B(x_0,r)\cup\{y_1,\dots,y_{k-1}\}\right)\\
&\supseteq C:=\conv\left(x_0+B'(r/2),\tilde y_1+B'(r/2),\dots,\tilde y_{k-1}+B'(r/2)\right).
\end{split}\]

Let $x'$ be the unique point in $[x_0,y_k]\cap H_0$, and, for $i=1,\dots,k-1$, let $y'_t$ be the unique point in $[\tilde y_t,y_k]\cap H_0$. Since $\dist(y_k,x_0+L_{k-1})\ge M'\ge 2d\ge 2\dist(x_0+L_{k-1},x'+L_{k-1})$,
\begin{equation}\label{eq:C'}
\conv(C,y_k)\cap H_0\supseteq C':=\conv(x'+B'(r/4),y'_1+B'(r/4),\dots,y'_k+B'(r/4)).
\end{equation}
Moreover, as $B(x_0,r)\subseteq P\subseteq Q_i$ and $B(\tilde y_t,r/2)\subseteq Q_i$ for $t=1,\dots,k-1$, we have
\begin{equation}\label{eq:balls}
B(x',r/2)\subseteq Q_i \qquad \mbox{and}\qquad B(y'_t,r/4)\subseteq Q_i\quad \mbox{for $t=1,\dots,k-1$}.
\end{equation}

Let $x''$ be the projection of $x'$ onto the space $z_0+L_{k-1}$.
We claim that
\[\|x''-x'\|=\|\pi(x''-x')\|\le d\tan(\alpha+\beta)-d\tan\alpha\le r/8\]
(see again Fig.~\ref{fig:alpha}).
The equality holds because $x''-x'\in L_{k-1}^\bot$ by construction; the first inequality describes the worst case (which is the one depicted in the figure), i.e., when $\|\pi(x''-x')\|$ is as large as possible;
the last bound follows from \eqref{eq:beta}.

Now define $y''_1,\dots,y''_{k-1}$ as the orthogonal projections of $y'_1,\dots,y'_{k-1}$ onto $z_0+L_{k-1}=\bar z+L_{k-1}$. Note that $y''_1,\dots,y''_{k-1}$ are obtained by translating $y'_1,\dots,y'_{k-1}$ by vector $x''-x'$. By the definition of $C'$ given in \eqref{eq:C'}, $y''_1,\dots,y''_{k-1}$ still belong to $C'$.
One verifies that $y''_t\in x''+L_t$ and $\dist(y''_t,x''+L_{t-1})\ge M'/2\ge M$ for $t=1,\dots,k-1$.
Since $\bar z+L_{k-1}$ is a translation of $L_{k-1}$ by an integer vector, by the choice of $M$ given by Lemma~\ref{lem:very-far-things} there is an integer point $p\in \bar z+H=H_0$ at distance at most $\delta=r/8$ from the set $\conv(x'',y''_1,\dots,y''_{k-1})$.

We claim that $p\in Q_i$. To see this, first observe that
\begin{equation}\label{eq:p}
\begin{split}
\dist(p,\conv(x',y'_1,\dots,y'_{k-1}))&\le\dist(p,\conv(x'',y''_1,\dots,y''_{k-1}))+\|x''-x'\|\\
&\le \frac r8 + \frac r8=\frac r4.
\end{split}
\end{equation}
Now from \eqref{eq:balls} we obtain that $\conv(x',y'_1,\dots,y'_{k-1})+B(0,r/4)\subseteq Q_i$ and thus, by \eqref{eq:p}, $p\in Q_i$. This is a contradiction, as $p$ is an integer point in $Q_i\setminus P$ ($p$ does not belong to $P$ because $p\in H_0$ and $H_0\cap P=\varnothing$ by assumption).

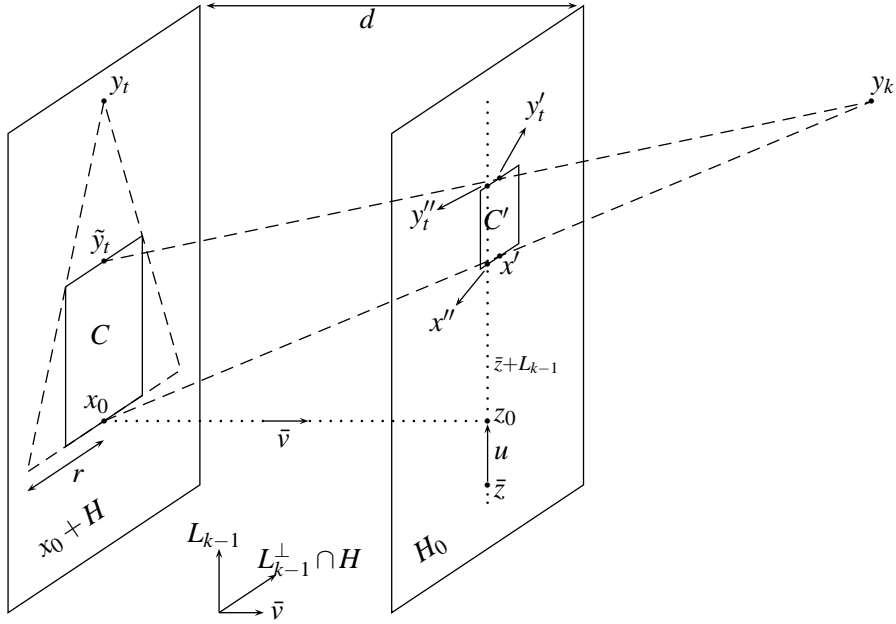
\begin{figure}
\begin{center}
\psset{unit=8.5mm}
\psset{linewidth=.5pt}
\begin{pspicture}(14,9.5)
\pspolygon(0,0)(0,7.5)(3,9.5)(3,2)
\rput{33.69}(1,1.3){$x_0+H$}
\put(1.5,3){\circle*{.1}}
\put(1.2,3.2){$x_0$}
\put(1.5,8){\circle*{.1}}
\put(1.6,8.2){$y_t$}
\pspolygon[linestyle=dashed](.3,2.2)(1.5,8)(2.7,3.8)
\psline{<->}(.3,1.9)(1.5,2.7)
\put(1,2.05){$r$}
\put(1.5,5.5){\circle*{.1}}
\put(1.3,5.7){$\tilde y_t$}
\put(1.3,4.2){$C$}
\pspolygon(0.9,2.6)(2.1,3.4)(2.1,5.9)(.9,5.1)
\pspolygon(6,0)(6,7.5)(9,9.5)(9,2)
\rput{33.69}(6.6,1){$H_0$}
\psline[linestyle=dotted,linewidth=1pt](1.5,3)(4,3)
\psline[linestyle=dotted,linewidth=1pt](4.7,3)(7.5,3)
\psline{->}(4,3)(4.7,3)
\put(4.2,2.6){$\bar v$}
\psline{->}(3.3,0)(3.3,1)
\put(2.8,1.1){$L_{k-1}$}
\psline{->}(3.3,0)(4.2,.6)
\put(3.9,0.7){$L_{k-1}^\bot\cap H$}
\put(7.5,3){\circle*{.1}}
\put(7.6,3){$z_0$}
\put(7.5,2){\circle*{.1}}
\put(7.6,1.8){$\bar z$}
\psline[linestyle=dotted,linewidth=1pt](7.5,1.7)(7.5,2)
\psline{->}(7.5,2.05)(7.5,2.95)
\put(7.6,2.4){$u$}
\psline[linestyle=dotted,linewidth=1pt](7.5,3)(7.5,8)
\put(13.5,8){\circle*{.1}}
\put(13.5,8.2){$y_k$}
\psline[linestyle=dashed](1.5,3)(13.5,8)(1.5,5.5)
\put(7.6875,5.578125){\circle*{.1}}
\put(7.6875,6.803125){\circle*{.1}}
\pspolygon(7.3875,5.378125)(7.9875,5.778125)(7.9875,7.003125)(7.3875,6.603125)
\put(7.45,6){$C'$}
\put(7.5,5.453125){\circle*{.1}}
\put(7.5,6.678125){\circle*{.1}}
\put(7.7,5.3){$x'$}
\put(6.6,4.4){$x''$}
\put(8.1,7.8){$y'_t$}
\put(6.3,6.1){$y''_t$}
\psline{->}(7.45,5.35)(7,4.8)
\psline{->}(7.7,6.9)(8.1,7.6)
\psline{->}(7.4,6.67)(6.7,6.3)
\put(7.6,3.8){$\scriptstyle\bar z+L_{k-1}$}
\psline{<->}(3.1,9.5)(8.9,9.5)
\put(5.5,9.15){$d$}
\psline{->}(3.3,0)(4,0)
\put(4.1,-0.1){$\bar v$}
\end{pspicture}
\caption{Illustration of the proof of (d).}
\label{fig:lf}
\end{center}
\end{figure}

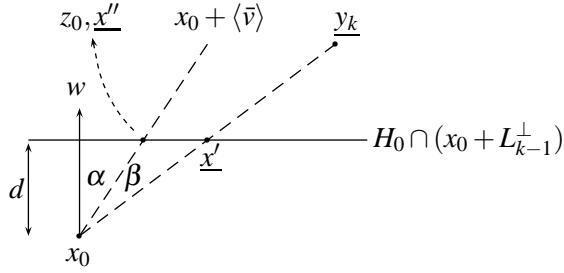
\begin{figure}
\begin{center}
\psset{unit=8.5mm}
\psset{linewidth=.5pt}
\begin{pspicture}(8.8,4.2)
\put(1,0.5){\circle*{.1}}
\psline(0.2,2)(5.5,2)
\put(5,3.5){\circle*{.1}}
\put(2,2){\circle*{.1}}
\psline[linestyle=dashed](1,0.5)(3,3.5)
\psline{->}(1,0.5)(1,2.5)
\put(3,2){\circle*{.1}}
\psline[linestyle=dashed](1,0.5)(5,3.5)
\put(2.5,3.8){$x_0+\langle\bar v\rangle$}
\put(0.8,2.7){$w$}
\put(5,3.8){$\underline{y_k}$}
\put(0.8,0.1){$x_0$}
\put(2.9,1.6){$\underline{x'}$}
\put(5.6,1.9){$H_0\cap(x_0+L_{k-1}^\bot)$}
\put(1.1,1.3){$\alpha$}
\put(1.7,1.3){$\beta$}
\psline[linearc=2,linestyle=dashed,dash=2pt 2pt]{->}(1.85,2.15)(1.4,2.6)(1.2,3.6)
\put(0.7,3.8){$z_0,\underline{x''}$}
\psline{<->}(0.2,0.5)(0.2,1.95)
\put(-0.1,1.1){$d$}
\end{pspicture}
\caption{Illustration of the proof of (d). The space $x_0+L_{k-1}^\bot$ is represented. Underlined symbols indicate points that do not necessarily belong to $x_0+L_{k-1}^\bot$; in other words, their orthogonal projection onto $x_0+L_{k-1}^\bot$ is represented.}
\label{fig:alpha}
\end{center}
\end{figure}

\subsection{Rationality of $L$}\label{sec:rationality}

As mentioned in Sect.~\ref{sec:nec-outline}, our procedure might return a non-rational linear subspace $L$. Note that this cannot be the case if the procedure terminates in step~4, as in this case the rationality of $L$ follows from the fact that we assumed that the theorem holds in $\R^{n-1}$. Therefore we now assume that the procedure terminates in step~3, and show that we can replace $L$ with a suitable nonzero rational linear subspace $\widetilde L$ and still have conditions (i)--(ii) fulfilled.

Since $P+L$ is full-dimensional, as discussed in Sect.~\ref{sec:lf} we have $P+L \subseteq \widetilde P+\widetilde L$, where $\widetilde P$ is a polytope and $\widetilde L$ is a rational linear subspace. Moreover, $\widetilde L\ne\{0\}$, as it contains $L$. Since we are assuming that the procedure terminates in step~3 (thus $F=P$), conditions (i)--(ii) are satisfied if $L$ is replaced with $\widetilde L$.

\subsection{The non-full-dimensional case}\label{sec:non-full}

The proof of the necessity of Theorem~\ref{th:main} given above covers the case of a full-dimensional rational polytope $P\subseteq\R^n$, assuming the result true both for full-dimensional and non-full-dimensional rational polytopes in $\R^{n-1}$. We now deal with the case of a non-full-dimensional polytope in $\R^n$.
For this purpose, we will take a non-full-dimensional polytope $P$ and make it full-dimensional by ``growing'' it along directions orthogonal to its affine hull. This will be done in such a way that no integer point is added to $P$. The idea is then to use the proof of the full-dimensional case given above.
We remark that even if we start from an integral polytope $P$, the new polytope that we construct will not be integral. This is why at the beginning of Sect.~\ref{sec:nec} we extended the notion of reverse split rank to rational polyhedra.

Note that if $P_I$ is the convex hull of integer points in a rational polytope $P$, it is not true (in general) that $s^*(P_I)=+\infty$ implies $s^*(P)=+\infty$. However, the key fact underlying our approach is the following:
\begin{quote}\em
Given a non-full-dimensional rational polytope $P$ with $s^*(P)=+\infty$, it is possible to ``enlarge'' $P$ and obtain a full-dimensional polytope $P'$ containing the same integer points as $P$, in such a way that $s^*(P')=+\infty$.
\end{quote}

Now, let $P$ be a $d$-dimensional rational polytope $P$, where $d<n$.
Assume that $s^*(P)=+\infty$. By applying a suitable unimodular transformation, we can assume that $\aff P=\R^d\times\{0\}^{n-d}$.

Given a rational basis $\{b_{d+1},\dots,b_n\}$ of the subspace $(\aff P)^\bot=\{0\}^d\times\R^{n-d}$, a rational point $\bar x\in\relint P$, and a rational number $\varepsilon>0$, we define
\[P(\bar x,\varepsilon)=\conv(P,\bar x+\varepsilon b_{d+1},\dots,\bar x+\varepsilon b_n);\]
we do not write explicitly the dependence on vectors $b_{d+1},\dots,b_n$, as they will be soon fixed. Note that $P(\bar x,\varepsilon)$ is a full-dimensional rational polytope.

We can now present the procedure that finds $F$ and $L$ as required. Recall that we are assuming by induction that the theorem is true for both full-dimensional and non-full-dimensional rational polytopes in $\R^{n-1}$.

\begin{enumerate}
\item[0.]
Let $\omega:=\omega(n-d,1)$ be the constant of Lemma~\ref{lem:flat}.
Choose a sequence $(Q_i)_{i\in\N}$ of relaxations of $P$ such that $\sup_i s(Q_i)=+\infty$.
For $i\in\N$, let $\widehat Q_i$ be the orthogonal projection of $Q_i$ onto the space $(\aff P)^\bot$, which we identify with $\R^{n-d}$.
If there exists an infinite subsequence of indices $i_1,i_2,\dots$ such that the lattice width of every polyhedron $\widehat Q_{i_t}$ in $\R^{n-d}$ is at most $\omega$, then $s^*(P)=+\infty$ also when we view $P$ as a polyhedron in $\R^{n-1}$. In this case, return $F$ and $L$ by induction, and stop.
\item
Fix a rational point $\bar x\in\relint P$; choose a rational basis $\{b_{d+1},\dots,b_n\}$ of $(\aff P)^\bot$, a rational number $\varepsilon>0$, and redefine the sequence of rational polyhedra $(Q_i)_{i\in\N}$ so that:
\begin{enumerate}[\upshape(a)]
\item
$P(\bar x,\varepsilon)$ has the same integer points as $P$,
\item
$Q_i$ is a relaxation of $P(\bar x,\varepsilon)$ (and thus of $P$) for every $i\in\N$,
\item
$\sup_i s(Q_i)=+\infty$;
\end{enumerate}
initialize $k=1$, $L_0=\{0\}$, and $S=P(\bar x,\varepsilon)$.
\item
Choose a sequence of points $(x_i)_{i\in\N}$ such that $x_i\in Q_i$ for all $i\in\N$ and $\sup_i\dist(x_i,S)=+\infty$;
let $v_i$ be the projection of $x_i-\bar x$ onto $L_{k-1}^\bot$;
define $\bar v$ as the limit of some subsequence of the sequence $\left(\frac{v_i}{\|v_i\|}\right)_{i\in\N}$ and assume that this subsequence coincides with the original sequence; define $L_k=\langle L_{k-1},\bar v\rangle$.
\item
If, for every strictly positive rational number $\varepsilon'\le\varepsilon$, $P(\bar x,\varepsilon')+L_k$ is not contained in any split, then choose a rational subspace $L\supseteq L_k$ such that $P(\bar x,\varepsilon)+L$ is lattice-free, return $F=P$ and $L$, and stop; otherwise,
let $S=\{x\in\R^n:\beta\le ax\le \beta+1\}$ be a split such that $P(\bar x,\varepsilon')+L_k\subseteq S$ for some strictly positive rational number $\varepsilon'\le\varepsilon$, and update $\varepsilon\leftarrow\varepsilon'$.
\item
If there exists $M\in\R$ such that $Q_i\subseteq\{x\in\R^n:\beta-M\le ax\le \beta+M\}$ for every $i\in\N$, then choose $j\in\{0,1\}$ such that $P^j:=P\cap\{x\in\R^n:ax=\beta+j\}$ has infinite reverse split rank (when viewed as a polytope in the affine space $\{x\in\R^n:ax=\beta+j\}$), then $F$ and $L$ exist by induction; return $F$ and $L$, and stop.
Otherwise, set $k\leftarrow k+1$, and go to 2.
\end{enumerate}


The fact that step 2 can be executed follows from the same argument given for the full dimensional case (see Sect. 5.2).
The following additional facts, which we prove below, imply the correctness of the procedure:
\begin{itemize}
\item
in step 0, if there exists an infinite subsequence of indices $i_1,i_2,\dots$ such that the lattice width of every polyhedron $\widehat Q_{i_t}$ in $\R^{n-d}$ is at most $\omega$, then $s^*(P)=+\infty$ also when we view $P$ as a polyhedron in $\R^{n-1}$ (Claim~\ref{cl:small-width});
\item
in step 1, a basis $\{b_{d+1},\dots,b_n\}$, a number $\varepsilon$, and a sequence $(Q_i)_{i\in\N}$ satisfying (a)--(c) do exist (Claim~\ref{cl:increase-dim});
\item
if we stop in step 3, then $F$ and $L$ are correctly determined (Claim~\ref{cl:no-split});
\item
if the condition of step 4 is true, then there exists $j\in\{0,1\}$ such that $P^j$ has infinite reverse split rank when viewed as a polytope in the affine space $\{x\in\R^n:ax=\beta+j\}$ (Claim~\ref{claim:faces}).
\end{itemize}

In the next four claims we prove the correctness of the procedure.
Recall that $\aff(P)=\R^d\times\{0\}^{n-d}$. Also, we identify the space $(\aff P)^\bot$ with $\R^{n-d}$; we will denote its variables by $x_{d+1},\dots,x_n$.
We denote by $e_j$ the unit vector in $\R^n$ with its only 1 in position $j$, for $j=1,\dots,n$.

\begin{myclaim}\label{cl:small-width}
If there exists an infinite subsequence of indices $i_1,i_2,\dots$ such that the lattice width of every polyhedron $\widehat Q_{i_t}$ in $\R^{n-d}$ is at most $\omega$, then $s^*(P)=+\infty$ also when we view $P$ as a polyhedron in $\R^{n-1}$.
\end{myclaim}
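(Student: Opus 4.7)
The plan is to compress each $Q_{i_t}$ (at bounded cost in split rank) into a polyhedron contained in a rational hyperplane that contains $\aff P$; after a unimodular change of coordinates this hyperplane becomes $\{x_n=0\}$, so the compressed polyhedra can be viewed as relaxations of $P$ in $\R^{n-1}$ whose split ranks still diverge.

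For each $t$, pick a primitive vector $c_t\in\Z^{n-d}$ achieving the lattice width of $\widehat Q_{i_t}$, so that $\widehat Q_{i_t}\subseteq\{y\in\R^{n-d}:\alpha_t\le c_ty\le\alpha_t+\omega\}$ for some $\alpha_t\in\R$, and let $c'_t\in\Z^n$ be the vector obtained by prepending $d$ zeros to $c_t$. Then $Q_{i_t}$ lies in the corresponding slab in $\R^n$ and, since $c'_tx=0$ on $\aff P$ while $P\subseteq Q_{i_t}$, we have $\alpha_t\in[-\omega,0]$. I would then apply Lemma~\ref{lem:ub-chv} with $c=\pm c'_t$, $\delta=0$, and $\delta'\in[0,\omega]$: a number $p$ of CG (hence split) closure iterations, depending only on $\omega$ and $\theta(n)$, suffices to make both inequalities $\pm c'_tx\le 0$ valid. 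Thus $Q'_{i_t}:=SC^p(Q_{i_t})$ is contained in $H_t:=\{x\in\R^n:c'_tx=0\}$ and still satisfies $s(Q'_{i_t})\ge s(Q_{i_t})-p\to+\infty$. Completing $c_t$ to a unimodular matrix of $\Z^{n-d}$ produces a unimodular transformation $\psi_t$ of $\R^n$ that acts only on the last $n-d$ coordinates, fixes $\R^d\times\{0\}^{n-d}$ pointwise (so $\psi_t(P)=P$), and sends $H_t$ to $\{x_n=0\}$; setting $R_t:=\psi_t(Q'_{i_t})\subseteq\{x_n=0\}$ we still have $s(R_t)=s(Q'_{i_t})\to+\infty$.

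Identifying $\{x_n=0\}$ with $\R^{n-1}$ via the coordinate projection $\pi$, the image $\tilde R_t:=\pi(R_t)$ is a relaxation of $P$ in $\R^{n-1}$, so it remains to prove $s_{\R^{n-1}}(\tilde R_t)\ge s_{\R^n}(R_t)$; this is the main obstacle. To establish it, observe that every split $S^*=\{y\in\R^{n-1}:\beta\le ay\le\beta+1\}$ of $\R^{n-1}$ with $a\in\Z^{n-1}$ primitive lifts to the split $S=\{x\in\R^n:\beta\le(a,0)x\le\beta+1\}$ of $\R^n$ (whose defining vector $(a,0)$ is still primitive), and the containment $R_t\subseteq\{x_n=0\}$ together with the injectivity of $\pi$ on $\{x_n=0\}$ yields $\pi(R_t\setminus\int S)=\tilde R_t\setminus\int S^*$. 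Consequently $\pi(SC_{\R^n}(R_t))\subseteq\conv(\tilde R_t\setminus\int S^*)$ for every such $S^*$, so $\pi(SC_{\R^n}(R_t))\subseteq SC_{\R^{n-1}}(\tilde R_t)$. Because $SC_{\R^n}^k(R_t)\subseteq R_t\subseteq\{x_n=0\}$ for every $k$, iterating via monotonicity of the split closure gives $\pi(SC_{\R^n}^k(R_t))\subseteq SC_{\R^{n-1}}^k(\tilde R_t)$ for every $k$, whence $s_{\R^n}(R_t)\le s_{\R^{n-1}}(\tilde R_t)$, as required.
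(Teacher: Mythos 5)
Your proof is correct and follows essentially the same route as the paper: extract a lattice-width direction $c_t$, use Lemma~\ref{lem:ub-chv} to collapse each $Q_{i_t}$ (at uniformly bounded CG/split cost) into a hyperplane containing $\aff P$, bring that hyperplane to $\{x_n=0\}$ by a unimodular transformation fixing $\aff P$, and conclude that $s^*(P)=+\infty$ in $\R^{n-1}$. The only differences are that you perform the CG-compression before rather than after the unimodular change of basis (harmless) and that you spell out, via the lifting-of-splits argument and the injectivity of $\pi$ on $\{x_n=0\}$, the fact that $s_{\R^{n-1}}(\tilde R_t)\ge s_{\R^n}(R_t)$, a step the paper uses implicitly.
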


\begin{proof}
If the lattice width of every polyhedron $\widehat Q_{i_t}$ in $\R^{n-d}$ is at most $\omega$, then
for every $t\in\N$ there is a primitive direction $c_t\in\Z^{n-d}$ such that every polyhedron $\widehat Q_{i_t}$ has width at most $\omega$ with respect to $c_t$.
For each $t\in\N$, we can find a unimodular transformation $u_t$ that maps $c_t$ to $e_n$ and keeps the subspace $\aff P$ unchanged. The resulting polyhedra $u_1(Q_{i_1}),u_2(Q_{i_2}),\dots$ are still relaxations of $P$, and they have the same split rank as $Q_{i_1},Q_{i_2},\dots$, respectively (see Sect.~\ref{sec:unimod}).
Thus $\sup\{s(u_1(Q_{i_1})),s(u_2(Q_{i_2})),\dots\}=+\infty$.
By Lemma~\ref{lem:ub-chv}, there is an integer $N$ such that $N$ iterations of the CG closure operator are sufficient to reduce each $u_t(Q_{i_t})$ to a polyhedron contained in $\{x\in\R^n:x_n=0\}$. Then $s^*(P)=+\infty$ also when we view $P$ as a polyhedron in $\R^{n-1}$.
\end{proof}

Under the hypothesis of the above claim, by induction there are $F$ and $L$ satisfying the conditions of the theorem when $P$ is viewed as a polytope in $\R^{n-1}$. It is immediate to check that with this choice of $F$ and $L$ the conditions of theorem are also satisfied when $P$ is viewed as a polytope in $\R^n$.

From now on we can assume that the hypothesis of the previous lemma is not satisfied. Wlog, we assume that every polyhedron in the sequence $(\widehat Q_i)_{i\in\N}$ has (minimum) lattice width larger than $\omega$.

\begin{myclaim}\label{cl:increase-dim}
For every $\bar x\in\relint P$, there exist a rational basis $\{b_{d+1},\dots,b_n\}$ of $(\aff P)^\bot$, a rational number $\varepsilon>0$, and a sequence of rational polyhedra $(Q_i)_{i\in\N}$ such that:
\begin{enumerate}[\upshape(a)]
\item
$P(\bar x,\varepsilon)$ has the same integer points as $P$;
\item
$Q_i$ is a relaxation of $P(\bar x,\varepsilon)$ (and thus of $P$) for every $i\in\N$;
\item
$\sup_i s(Q_i)=+\infty$.
\end{enumerate}
\end{myclaim}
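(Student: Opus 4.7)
The plan is to find, along a suitable subsequence of $(Q_i)$, a uniform simplex in $(\aff P)^\bot$ around the origin that lies in each cross-section $\Sigma_i := \{u \in (\aff P)^\bot : \bar x + u \in Q_i\}$; the extreme points of this simplex will supply the basis $\{b_{d+1},\dots,b_n\}$, and its size will fix $\varepsilon$. I keep the original sequence $(Q_i)$ with $\sup_i s(Q_i) = +\infty$, along with every $\widehat Q_i$ having lattice width strictly greater than $\omega = \omega(n-d,1)$.

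First I would exploit Lemma~\ref{lem:flat}: since $0 \in \widehat Q_i$ and the lattice width of $\widehat Q_i$ exceeds $\omega$, its contrapositive (with $k=1$) yields a nonzero integer point in $\widehat Q_i$. Iterating this argument on rational sub-lattices of $\Z^{n-d}$ obtained by modding out successively found primitive integer vectors (the width requirements at each step reduce to $\omega(n-d-j,1) \le \omega$), I obtain $n-d$ linearly independent integer points $z_i^{(1)},\dots,z_i^{(n-d)}\in\widehat Q_i\cap\Z^{n-d}$. For each $j$, pick a preimage $y_i^{(j)}\in Q_i$ with $\pi(y_i^{(j)})=z_i^{(j)}$ minimizing $\|(y_i^{(j)})'-\bar x'\|$, where $x'$ denotes orthogonal projection onto $\aff P$. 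Let $r>0$ be the radius of the largest ball in $\aff P$ centered at $\bar x$ and contained in $P$; this is positive because $\bar x\in\relint P$. Then $Q_i$ contains $\conv(B(\bar x,r)\cap\aff P,y_i^{(1)},\dots,y_i^{(n-d)})$ by convexity, and a direct cross-section computation at $\{x:x'=\bar x'\}$ shows that $\Sigma_i$ contains the simplex with vertices $0$ and $\tfrac{r}{r+\|(y_i^{(j)})'-\bar x'\|}\,z_i^{(j)}$ for $j=1,\dots,n-d$.

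The hard step---the main obstacle---is to show that, after passing to a further subsequence, the unit vectors $z_i^{(j)}/\|z_i^{(j)}\|$ converge to linearly independent limits $b^{(j)}$ and the magnitudes $\tfrac{r\,\|z_i^{(j)}\|}{r+\|(y_i^{(j)})'-\bar x'\|}$ are uniformly bounded below by a positive constant $\rho$. Compactness of the unit sphere (applied iteratively) handles the convergence and preserves linear independence since the $z_i^{(j)}$ were chosen from successive orthogonal sub-lattices. For the uniform lower bound, the delicate point is ruling out the case $\|(y_i^{(j)})'-\bar x'\|/\|z_i^{(j)}\|\to\infty$ for some $j$. In that scenario $Q_i$ becomes highly elongated in an $\aff P$-direction relative to its extent in $(\aff P)^\bot$; applying Lemma~\ref{lem:ub-chv} with a primitive integer direction in $\aff P$ aligned with $(y_i^{(j)})'-\bar x'$, a uniformly bounded number of CG closures (hence split closures) suffices to cut $Q_i$ down to a polyhedron of diameter uniformly bounded in $i$, whose split rank is then bounded by Corollary~\ref{cor:ub-chv}---contradicting $\sup_i s(Q_i)=+\infty$.

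Once $\rho>0$ is secured, I pick a rational basis $\{b_{d+1},\dots,b_n\}$ of $(\aff P)^\bot$ close enough to $(b^{(d+1)},\dots,b^{(n)})$ and a rational $\varepsilon>0$ small enough that each point $\varepsilon b_j$ lies in the uniform simplex contained in $\Sigma_i$ for all large $i$. This gives $\bar x+\varepsilon b_j\in Q_i$ for all such $i$, i.e., $P(\bar x,\varepsilon)\subseteq Q_i$, establishing~(b). Finally, shrinking $\varepsilon$ further if necessary ensures $P(\bar x,\varepsilon)\cap\Z^n=P\cap\Z^n$, which is possible because $\dist(P,\Z^n\sm P)>0$ (the integer points in $\aff P$ form a discrete sublattice of $\aff P$ and $P$ is compact), establishing~(a). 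Condition~(c) follows by passing to the subsequence and reindexing as $\N$.
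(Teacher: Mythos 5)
Your overall plan---extract lattice points in the cross-sections $\widehat Q_i$ via the flatness estimate, lift them to $Q_i$, and find a common simplex inside the fibre sections $\Sigma_i$ by a compactness/subsequence argument---shares the paper's starting point (Lemma~\ref{lem:flat} applied to $\widehat Q_i$) but forgoes the paper's central device: \emph{unimodular transformations that fix $\aff P$ pointwise}. The paper applies, for each $i$, a unimodular map $u_i$ sending the rounded vector $\bar z_i$ to $e_{d+1}$ while leaving $\aff P$ unchanged; this forces the $\aff P$-component of the lifted lattice point $u_i(y_i)$ to lie within $\|\cdot\|_\infty$-distance $1/2$ of $\bar x$ \emph{for free}, so that the cones $\conv(B,p_i+e_{d+1})$ share a point $\bar x+\varepsilon e_{d+1}$ without any subsequence argument, and it raises the dimension one step at a time rather than extracting $n-d$ independent lattice points at once.

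The gap in your proposal is precisely the step you flag as ``the main obstacle'': ruling out $\|(y_i^{(j)})'-\bar x\|/\|z_i^{(j)}\|\to\infty$. The proposed rescue via Lemma~\ref{lem:ub-chv} does not go through. That lemma bounds the number of CG rounds needed to recover a valid inequality $cx\le\delta$ for the integer hull by $p=(\lfloor\delta'\rfloor-\lfloor\delta\rfloor)\theta(n)+1$, where $\delta'$ is the right-hand side valid for $Q_i$. If $Q_i$ is elongated in an $\aff P$-direction then $\delta'-\delta$ grows with $i$, so $p$ grows with $i$: the very dependence on elongation that you need to overcome is what the lemma encodes, so it cannot be turned against itself to produce a \emph{uniform} bound. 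Moreover, even granting some CG cuts in one $\aff P$-direction, the resulting polyhedron is not of uniformly bounded diameter: each $\widehat Q_i$ has lattice width exceeding $\omega$, so $Q_i$ retains unbounded-in-$i$ extent in $(\aff P)^\bot$ and Corollary~\ref{cor:ub-chv} does not apply. A secondary concern is the extraction of $n-d$ independent lattice points in one pass: a lattice point of the quotient $\Z^{n-d}/\Z z^{(1)}$ lying in the projection of $\widehat Q_i$ need not lift to a lattice point of $\Z^{n-d}$ inside $\widehat Q_i$ itself, so ``modding out successively'' requires additional justification. The paper's normalization-then-iterate scheme sidesteps both problems simultaneously: after each unimodular map the new apex sits a fixed distance from $\bar x$, and the dimension increase is handled one coordinate at a time.
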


\begin{proof}
Since every $\widehat Q_i$ has lattice width larger than $\omega$, because of Lemma~\ref{lem:flat} every $\widehat Q_i$ contains a nonzero integer point $\hat y_i$. Since the origin belongs to $\widehat Q_i$, we can assume wlog that $\hat y_i$ is a primitive vector in $\Z^{n-d}$. For every $i\in\N$, there exists a unimodular linear transformation of $\R^{n-d}$ that maps $\hat y_i$ to $e_{d+1}$. Furthermore, each of these transformations can be extended to a unimodular linear transformation of $\R^n$ that maps $\aff P$ to itself. To simplify notation, we assume that every $Q_i$ coincides with its image via the latter transformation. Then every polyhedron in the sequence $(Q_i)_{i\in\N}$ is a relaxation of $P$ that contains a point $y_i$ of the form $y_i=x_i+e_{d+1}$ for some $x_i\in\aff P=\R^d\times\{0\}^{n-d}$.

For every $i\in\N$, define $z_i=y_i-\bar x$. Let $\bar z_i$ be the vector obtained from $z_i$ by rounding each entry to the closest integer.
Note that $\bar z_i\in\Z^{d+1}\times\{0\}^{n-d-1}$ and has its $(d+1)$-th component equal to 1. Then $\bar z_i$ is a primitive vector and therefore there exists a unimodular linear transformation $u_i$ such that $u_i(x)=x$ for every $x\in\aff P$ and $u_i(\bar z_i)=e_{d+1}$.
We then have
\[u_i(y_i)=u_i(\bar x+z_i)=u_i(\bar x)+u_i(z_i-\bar z_i)+u_i(\bar z_i)=\bar x+(z_i-\bar z_i)+e_{d+1},\]
where the equality $u_i(z_i-\bar z_i)=z_i-\bar z_i$ holds because $z_i-\bar z_i\in\aff P$ and $u_i(x)=x$ for every $x\in\aff P$.
Since $z_i-\bar z_i\in\aff P=\R^d\times\{0\}^{n-d}$ and has its components in the interval $[-1/2,1/2]$,
we obtain that every $u_i(Q_i)$ is a relaxation of $P$ that contains a point of the type $p_i+e_{d+1}$ for some $p_i\in\aff P$ such that $\|p_i-\bar x\|_{\infty}\le 1/2$ (see Fig.~\ref{fig:common-point}). Since $\bar x\in\relint P$, there exists $r>0$ such that $B:=B(\bar x,r)\cap\aff P\subseteq P$. Then $\conv(B,p_i+e_{d+1})\subseteq u_i(Q_i)$ for $i\in\N$. This implies that there exists a point of the type $\bar x+\varepsilon e_{d+1}$ that belongs to $u_i(Q_i)$ for every $i\in\N$ (for some rational $\varepsilon>0$). By choosing $\varepsilon$ sufficiently small, the polyhedron $\widetilde P=\conv(P,\bar x+\varepsilon e_{d+1})$ will have the same integer points as $P$. Note that $\widetilde P$ is a polyhedron of dimension $d+1$ and every $\widetilde Q_i=u_i(Q_i)$ is a relaxation of $\widetilde P$. We take $b_{d+1}=e_{d+1}$.

The conclusion now follows by iterating the arguments used in this proof until a full-dimensional polytope is obtained.
Note that we cannot use the same $\bar x$, since it does not lie in $\relint\widetilde P$. However, we can slightly perturb it, e.g.\ by taking the middle point of $\bar x$ and $\bar x + \varepsilon e_{d+1}$. After the last iteration, we will determine a rational basis $\{b_{d+1},\dots,b_n\}$ of $(\aff P)^\bot$ and a rational number $\varepsilon>0$ as required.
\end{proof}

\begin{figure}
\begin{center}
\psset{unit=9mm}
\psset{linewidth=.5pt}
\begin{pspicture}(10.7,4)
\psline[linestyle=dashed](0,0.5)(8,0.5)
\psline(3,0.5)(5,0.5)
\put(4,0.5){\circle*{.1}}
\put(3.9,0.1){$\bar x$}
\put(8.2,0.45){$\aff P$}
\psline[linestyle=dashed](0,3)(8,3)
\put(8.2,2.95){$\aff P+e_{d+1}$}
\put(1.2,3){\circle*{.1}}
\put(3,3){\circle*{.1}}
\put(6,3){\circle*{.1}}
\put(7.1,3){\circle*{.1}}
\psline(3,0.5)(1.2,3)(5,0.5)
\psline(3,0.5)(3,3)(5,0.5)
\psline(3,0.5)(6,3)(5,0.5)
\psline(3,0.5)(7.1,3)(5,0.5)
\psline[linestyle=dashed](0.5,0.1)(0.5,3.7)
\psline[linestyle=dashed](7.5,0.1)(7.5,3.7)
\psline[linestyle=dashed](4,0.5)(4,.7)
\psline[linestyle=dashed](4,0.95)(4,3.7)
\psline{<->}(.5,3.5)(3.99,3.5)
\psline{<->}(7.5,3.5)(4.01,3.5)
\put(2,3.7){$\scriptstyle1/2$}
\put(5.5,3.7){$\scriptstyle1/2$}
\put(3.925,.75){$\star$}
\end{pspicture}
\caption{Illustration of the proof of Claim~\ref{cl:increase-dim}. Each point in the higher part of the figure is a point of the type $p_i+e_{d+1}$ for some $p_i\in\aff P$ such that $\|p_i-\bar x\|_\infty\le1/2$. The base of the pyramids is the ball $B(\bar x,r)$. The pyramids have a common point of the form $\bar x+\varepsilon e_{d+1}$ for some $\varepsilon>0$, e.g., the one marked with an asterisk.}
\label{fig:common-point}
\end{center}
\end{figure}
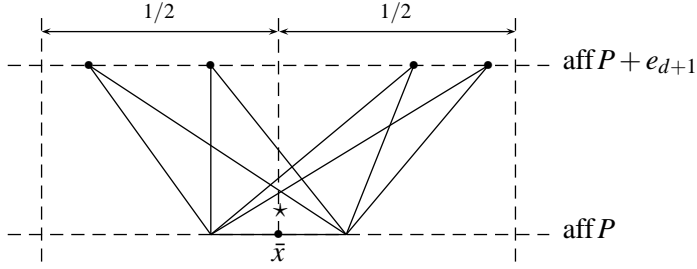

Notice that the sequence $(Q_i)_{i\in\N}$ consists of relaxations of $P(\bar x,\varepsilon')$ for every strictly positive rational number $\varepsilon'\le\varepsilon$.

The next claim shows that if we stop in step~3 at some iteration $k$, then $F$ and $L$ are correctly determined by the algorithm. The fact that $P(\bar x,\varepsilon)+L_k$ is lattice-free can be proved as in Sect.~\ref{sec:d}. Then the existence of a rational subspace $L$ containing $L_k$ such that $P(\bar x,\varepsilon)+L$ is lattice-free (which is required in step~3) follows from the discussion made in Sect.~\ref{sec:lf}.

\begin{myclaim}\label{cl:no-split}
Assume that, for every strictly positive rational number $\varepsilon'\le\varepsilon$, $P(\bar x,\varepsilon')+L_k$ is not contained in any split. Let $L$ be a rational subspace containing $L_k$ such that $P(\bar x,\varepsilon)+L$ is lattice-free. Then $P+L$ is relatively lattice-free and $\relint(P+L)$ is not contained in the interior of any split (i.e., the conditions of the theorem are satisfied with $F=P$).
\end{myclaim}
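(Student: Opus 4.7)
The plan is to handle the two conclusions of the claim separately, dealing with the split condition first and the relative lattice-freeness second, the latter splitting into an easy and a hard sub-case.

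For the split condition, suppose by contradiction that $\relint(P+L)\subseteq\int S$ for some split $S=\{x:\beta\le ax\le\beta+1\}$. Since $\relint(P+L)=\relint P+L$ and $L$ is a linear subspace, every line $p+\R l$ with $p\in\relint P$ and $l\in L$ sits inside $\int S$, which forces $L\subseteq\lin S$ and $\relint P\subseteq\int S$. In particular $\bar x\in\int S$, and by openness of $\int S$ there is a rational $\varepsilon'\le\varepsilon$ with $\bar x+\varepsilon'b_j\in\int S$ for every $j=d+1,\dots,n$. Combined with $P\subseteq S$ (from $\relint P\subseteq\int S$ and closedness of $S$), this gives $P(\bar x,\varepsilon')\subseteq S$, and then $P(\bar x,\varepsilon')+L_k\subseteq S$ since $L_k\subseteq L\subseteq\lin S$. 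This contradicts the standing hypothesis on $L_k$.

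For the relative lattice-freeness, suppose $z\in\relint(P+L)\cap\Z^n$ and write $z=p+l$ with $p\in\relint P$ and $l\in L$, using $\relint(P+L)=\relint P+L$. Set $L_P:=\aff P-p_0$ for some $p_0\in P$. If $L+L_P=\R^n$, then $P+L$ is full-dimensional in $\R^n$, so $\relint(P+L)=\int(P+L)\subseteq\int(P(\bar x,\varepsilon)+L)$, contradicting lattice-freeness of the latter. Otherwise $L+L_P\subsetneq\R^n$, and we pick a primitive $c\in(L+L_P)^\bot\cap\Z^n$. Then $c$ is constant on $\aff P$ with value $\gamma:=cp_0$, and $cz=\gamma\in\Z$. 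Since $c\in L_k^\bot$, the range of $cx$ on $P(\bar x,\varepsilon')+L_k$ equals $[\gamma+\varepsilon'\min(0,\min_j cb_j),\ \gamma+\varepsilon'\max(0,\max_j cb_j)]$. If $c$ can be chosen so that all $cb_j$ ($j>d$) are nonnegative (or all nonpositive), then for small enough rational $\varepsilon'$ this range lies in $[\gamma,\gamma+1]$ (respectively $[\gamma-1,\gamma]$), putting $P(\bar x,\varepsilon')+L_k$ inside a split and contradicting the hypothesis.

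The main obstacle is the remaining sub-case, where every primitive $c\in(L+L_P)^\bot\cap\Z^n$ has mixed-sign coefficients $(cb_j)_{j>d}$. I will apply Stiemke's lemma to the rational subspace $V:=\pi_{L_P^\bot}(L)\subseteq L_P^\bot$: under the identification of $L_P^\bot$ with $\R^{n-d}$ via the basis $\{b_j\}_{j>d}$, the absence of a nonnegative nonzero integer vector in $V^\bot$ (which is what the mixed-sign condition says, together with rationality and density) is equivalent to the existence of a strictly positive vector in $V$, yielding some $l_0\in L$ whose projection onto $L_P^\bot$ is a strictly positive combination of $b_{d+1},\dots,b_n$. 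Consequently $l_0$ lies in the topological interior of the tangent cone $L_P+\cone(b_{d+1},\dots,b_n)$ of $P(\bar x,\varepsilon)$ at $p\in\relint P$, so $p+\delta l_0\in\int P(\bar x,\varepsilon)$ for every sufficiently small $\delta>0$. Since $\delta l_0\in L$, the quotient map $\pi:\R^n\to\R^n/L$ sends $z$ and $p+\delta l_0$ to the same image; as $\pi$ carries interior points to interior points, this gives $\pi(z)\in\int\pi(P(\bar x,\varepsilon))$, equivalently $z\in\int(P(\bar x,\varepsilon)+L)$. This again contradicts lattice-freeness, completing the proof.
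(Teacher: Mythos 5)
Your proof is correct, and the first half (contradicting condition (i)) is essentially the same as the paper's. The second half (relative lattice-freeness) is correct but follows a genuinely different route, and a noticeably more elaborate one.

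The paper exploits the following simple dichotomy: since $P+L$ and $P(\bar x,\varepsilon)+L$ are convex with $P+L\subseteq P(\bar x,\varepsilon)+L$, the set $\relint(P+L)$ lies either entirely in $\int\bigl(P(\bar x,\varepsilon)+L\bigr)$ or entirely in its boundary. In the first case relative lattice-freeness of $P+L$ is immediate from lattice-freeness of $P(\bar x,\varepsilon)+L$. In the second case, $P+L$ lies in a proper face of the rational polyhedron $P(\bar x,\varepsilon)+L$, and hence in a rational supporting hyperplane $\{ax=\beta\}$ with $a\in\Z^n$ primitive and $\beta\in\Z$ (integrality of $\beta$ uses that $P$ contains an integer point, which lies in the hyperplane). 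Taking the split $\{\beta\le ax\le\beta+1\}$ and shrinking $\varepsilon'$ then contradicts the hypothesis on $L_k$.

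Your argument reconstructs this dichotomy by hand: you pick $c\in(L+L_P)^\bot\cap\Z^n$, observe that $c$ is constant on $\aff P$, and then split on the sign pattern of $(cb_j)_{j>d}$. The ``same sign'' sub-case is exactly the paper's boundary case — the supporting rational hyperplane the paper finds has normal $a$ satisfying $ab_j\ge0$ for all $j$, which is precisely your sign condition. Your ``mixed sign'' sub-case plus the full-dimensional sub-case $L+L_P=\R^n$ together correspond to the paper's interior case, but where the paper gets $\relint(P+L)\subseteq\int\bigl(P(\bar x,\varepsilon)+L\bigr)$ for free from the dichotomy, you need Stiemke's lemma to produce a strictly positive vector in $V=\pi_{L_P^\bot}(L)$, identify it with an interior direction of the tangent cone $L_P+\cone(b_{d+1},\dots,b_n)$ of $P(\bar x,\varepsilon)$ at $p$, and then push $z$ into the interior via the quotient by $L$. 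All the steps check out (the density argument promoting the ``no nonnegative nonzero integer vector'' condition to ``no nonnegative nonzero real vector'' is fine because $(L+L_P)^\bot$ is a rational subspace and the relevant cone is rational polyhedral). So you arrive at the same contradiction, but the paper's face/boundary dichotomy buys a much shorter proof and avoids any theorem of the alternative, while your version makes the geometry behind that dichotomy explicit — which is instructive, but more work.
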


\begin{proof}
Assume by contradiction that there is a split $S$ such that $\relint(P+L)\subseteq\int S$. Then $\bar x\in\int S$. This implies that for $\varepsilon'>0$ small enough $P(\bar x,\varepsilon')+L_k\subseteq P(\bar x,\varepsilon')+L\subseteq S$, a contradiction.

We now prove that $P+L$ is relatively lattice-free.
Note that since $P$ is a face of $P(\bar x,\varepsilon)$, $P+L$ is a face of $P(\bar x,\varepsilon)+L$. Then either $\relint(P+L)$ is contained in the boundary of $P(\bar x,\varepsilon)+L$, or $\relint(P+L)\subseteq\int(P(\bar x,\varepsilon)+L)$.
The latter case immediately implies that $P+L$ is relatively lattice-free, as $P(\bar x,\varepsilon)+L$ is lattice-free.
So we assume that $\relint(P+L)$ is contained in the boundary of $P(\bar x,\varepsilon)+L$.

Let $H$ be a rational hyperplane containing $P+L$ and not containing any interior point of $P(\bar x,\varepsilon)+L$; note that $H$ is a supporting hyperplane for $P(\bar x,\varepsilon)+L$. We denote by $ax=\beta$ an equation defining $H$, where $a\in\Z^n$ is a primitive vector and $\beta\in\Z$. Assume wlog that $ax\ge\beta$ is a valid inequality for $P(\bar x,\varepsilon)+L$. Define the split $S=\{x\in\R^n:\beta\le ax\le\beta+1\}$. For $\varepsilon'>0$ sufficiently small, $P(\bar x,\varepsilon')+L_k\subseteq P(\bar x,\varepsilon')+L\subseteq S$, a contradiction.
\end{proof}

\begin{myclaim}\label{claim:faces}
In step 4, if there exists $M\in\R$ such that $Q_i\subseteq\{x\in\R^n:\beta-M\le ax\le \beta+M\}$ for every $i\in\N$, then there exists $j\in\{0,1\}$ such that $P^j$ has infinite reverse split rank when viewed as a polytope in the affine space $\{x\in\R^n:ax=\beta+j\}$.
\end{myclaim}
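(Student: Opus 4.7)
The plan is to repeat almost verbatim the argument used in Section~\ref{sec:c} (the proof of (c)), now applied to the relaxations $Q_i$ of $P(\bar x, \varepsilon)$. The hypothesis $Q_i \subseteq \{x\in\R^n : \beta - M \le ax \le \beta + M\}$ plays exactly the role of the bounding strip in the full-dimensional proof, so only minor adjustments are needed to account for the fact that the $Q_i$ are relaxations of $P(\bar x, \varepsilon)$ rather than of $P$ directly.

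First, I would apply Lemma~\ref{lem:ub-chv} twice, to the valid inequalities $ax \ge \beta$ and $ax \le \beta + 1$ for the integral polytope $P = \conv(P\cap \Z^n) = \conv(Q_i\cap\Z^n)$. Since the gap between $\pm M$ and $\beta$ or $\beta+1$ is uniformly bounded, this yields a single integer $N$ (depending only on $n$ and $M$) such that $\widetilde Q_i := SC^{N}(Q_i)$ is contained in $S$ for every $i\in\N$. Each $\widetilde Q_i$ is still a rational polyhedron with $\widetilde Q_i \cap \Z^n = P\cap \Z^n$, and because $N$ is fixed while $\sup_i s(Q_i) = +\infty$, we have $\sup_i s(\widetilde Q_i) = +\infty$ as well.

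Next, since $\widetilde Q_i \subseteq S$, Lemma~\ref{lem:split-faces} applies: denoting by $\widetilde Q_i^j$ the face of $\widetilde Q_i$ cut out by the equation $ax = \beta + j$ for $j \in \{0,1\}$, we have
\[
s(\widetilde Q_i) \le \max\{s(\widetilde Q_i^0),\, s(\widetilde Q_i^1)\} + 1.
\]
Hence there exists $j \in \{0,1\}$ such that $\sup_i s(\widetilde Q_i^j) = +\infty$. Fix such a $j$ and set $H = \{x\in\R^n : ax = \beta + j\}$; this is a rational affine hyperplane, so via a unimodular transformation fixing $H$ we may view it as $\R^{n-1}$.

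Finally, I would check that each $\widetilde Q_i^j$ is a relaxation of $P^j$ in $H$. Since $P$ is integral and coincides with $\conv(Q_i\cap\Z^n)$, we have $P \subseteq SC^{N}(Q_i) = \widetilde Q_i$, so $P^j = P\cap H \subseteq \widetilde Q_i\cap H = \widetilde Q_i^j$; moreover $\widetilde Q_i^j \cap \Z^n = (\widetilde Q_i\cap\Z^n)\cap H = (P\cap\Z^n)\cap H = P^j\cap \Z^n$. Thus $\widetilde Q_i^j$ is a relaxation of $P^j$ in $H$ with unbounded split rank, giving $s^*(P^j) = +\infty$ as desired. There is no real obstacle here: the argument is a direct transcription of the proof of (c), with the only point requiring care being the verification that $P \subseteq \widetilde Q_i$ (which uses the integrality of $P$) so that the faces $\widetilde Q_i^j$ genuinely contain $P^j$.
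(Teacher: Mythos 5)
There is a genuine gap in the final step of your argument, precisely at the point you flag as ``the only point requiring care.'' You assert that $P$ is integral and coincides with $\conv(Q_i\cap\Z^n)$, and you use this to conclude $P\subseteq SC^N(Q_i)=\widetilde Q_i$. But in Section~\ref{sec:non-full} the polytope $P$ is only a \emph{rational} $d$-dimensional polytope (the induction is explicitly set up for possibly non-integral rational polytopes, since we need to approximate a non-full-dimensional integral polytope by a full-dimensional non-integral one). Hence $P\ne\conv(P\cap\Z^n)$ in general, the split closure of $Q_i$ is not guaranteed to contain $P$, and the inclusion $P^j\subseteq\widetilde Q_i^j$ does not follow. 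Without it, $\widetilde Q_i^j$ is only a relaxation of $\conv(P^j\cap\Z^n)$, not of $P^j$, and $s^*(\conv(P^j\cap\Z^n))=+\infty$ does not imply $s^*(P^j)=+\infty$ (the implication goes the other way for nested sets with the same integer points).

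The paper sidesteps this entirely. It does not re-run the argument of Section~\ref{sec:c} on $P$ directly; instead it invokes the full-dimensional result as applied to the full-dimensional polytope $P(\bar x,\varepsilon)$, of which the $Q_i$ are genuine relaxations, obtaining $s^*(P^j(\bar x,\varepsilon))=+\infty$ in $H^j$ for some $j$. It then observes that $P^j\subseteq P^j(\bar x,\varepsilon)$ and these two polytopes have the same integer points, so every relaxation of $P^j(\bar x,\varepsilon)$ is a relaxation of $P^j$, giving $s^*(P^j)\ge s^*(P^j(\bar x,\varepsilon))=+\infty$. Your proof can be repaired along the same lines: carry out your steps with $P(\bar x,\varepsilon)$ in the role of the polytope whose faces you analyze (for instance replacing $\widetilde Q_i$ by $\conv(SC^N(Q_i),P(\bar x,\varepsilon))$, which restores the containment and still has unbounded split rank), and then append the short containment argument to pass from $P^j(\bar x,\varepsilon)$ to $P^j$. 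As written, though, the appeal to integrality of $P$ is a false step, not a harmless remark.
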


\begin{proof}
Denote by $H^0$ and $H^1$ the hyperplanes defining the split $S$. For $j\in\{0,1\}$, let $P^j(\bar x,\varepsilon)$ be the face of $P(\bar x,\varepsilon)$ induced by $H^j$.
The proof of the full-dimensional case shows that there exists $j\in\{0,1\}$ such that $P^j(\bar x,\varepsilon)$ has infinite reverse split rank when viewed as a polytope in the $(n-1)$-dimensional space $H^j$.
Since $P^j\subseteq P^j(\bar x,\varepsilon)$ and these two polytopes have the same integer points, $P^j$ has also infinite reverse split rank when viewed as a polytope in $H^j$.
\end{proof}

\subsection{An observation on $F$}

We conclude this section with an observation that gives some more information on the face $F$ in the statement of Theorem~\ref{th:main}.

\begin{observation}\label{obs:face}
Let $P\subseteq\R^n$ be an integral polytope with $s^*(P)=+\infty$, and let $F$ and $L$ be the output of the procedure of Sect.~\ref{sec:nec-outline} or Sect.~\ref{sec:non-full}. Then $F+L$ is a face of $P+L$.
\end{observation}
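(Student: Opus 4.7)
\emph{Proof plan.} I would proceed by induction on the ambient dimension $n$; the base case $n=1$ is vacuous since every rational polytope in $\R^1$ has finite split rank. In the inductive step the procedure (in both Sect.~\ref{sec:nec-outline} and Sect.~\ref{sec:non-full}) exits either in step 3, returning $F=P$, or in step 4, via recursion on the face $P^j = P \cap H^j$ in the $(n-1)$-dimensional affine space $H^j = \{x : ax = \beta + j\}$. The step-3 exit is immediate: $F+L = P+L$ is trivially a face of itself.

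For the step-4 exit the argument rests on two observations. First, $P \subseteq S$: reaching step 4 required applying $N$ rounds of CG closure to the relaxations $Q_i$ (via Lemma~\ref{lem:ub-chv}) to obtain intermediate relaxations $\widetilde Q_i \subseteq S$ of $P$, and $P \subseteq \widetilde Q_i$; in the non-full-dimensional case one uses the chain $P \subseteq P(\bar x, \varepsilon') \subseteq S$ instead. Second, $L \subseteq \lin S$, because the recursive call takes place inside the affine space $H^j$ and so returns a subspace of its underlying linear space $H^*$, which coincides with $\lin S$ (as noted explicitly in Sect.~\ref{sec:c}).

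These two observations give the chain of face inclusions I need. From the first, $H^j$ is a supporting hyperplane of $P$ and $P^j$ is the induced face. From the second, $a\ell = 0$ for every $\ell \in L$, so $a(p+\ell) = \beta+j$ iff $p \in P^j$, giving $(P+L) \cap H^j = P^j + L$; since $P+L \subseteq S$, this exhibits $P^j + L$ as a face of $P+L$. Combining this with the inductive hypothesis applied to $P^j$—which yields that $F+L$ is a face of $P^j + L$—and using transitivity of the face relation, I conclude that $F+L$ is a face of $P+L$. The only point that requires any care is the bookkeeping needed to confirm $L \subseteq \lin S$; once this is in place the rest of the argument is formal.
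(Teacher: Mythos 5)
Your proof is correct and follows essentially the same route as the paper's: the step‑3 exit is trivial, and for the step‑4 exit one observes that $P+L\subseteq S$ with $L\subseteq\lin S$, so $P^j+L=(P+L)\cap H^j$ is a face of $P+L$, and then the inductive hypothesis and transitivity of the face relation finish the argument. The only cosmetic difference is that you justify $P\subseteq S$ via the intermediate CG closures $\widetilde Q_i$, whereas step~3 already records $P+L_k\subseteq S$ (resp.\ $P(\bar x,\varepsilon')+L_k\subseteq S$) directly, and you omit the step‑0 exit of the non‑full‑dimensional procedure, which is handled by the same induction argument.
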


\begin{proof}
If the procedure terminates at the first iteration, then $F=P$ and the statement is trivial. Therefore we assume that the procedure ends at some iteration $k>1$. In this case $F$ and $L$ are determined by induction on a face $P^j$ of $P$, for some for some $j\in\{0,1\}$, where $P^j$ is viewed as a polytope in a rational hyperplane $H^j=\{x\in\R^n:ax=\beta+j\}$. Define $H^*=\lin H^j$.
Then, assuming the statement true by induction, $F+L$ is a face of $P^j+L$. Since $P^j\subseteq H^j$, and $L\subseteq H^*$, $P^j+L$ is a face of $P+L$. Then $F+L$ is a face of $P+L$.

\end{proof}

\section{Proof of necessity for unbounded polyhedra}\label{sec:nec-unbounded}

We prove here the necessity of conditions (i)--(ii) of Theorem~\ref{th:main} for unbounded polyhedra.
We assume that $P\subsetneq\R^n$, as for $P=\R^n$ we have $s^*(P)=0$ and there is nothing to prove.
We start with a lemma.

\begin{lemma}\label{lem:lin}
Let $P\subseteq\R^n$ be an integral polyhedron with $\lin P=\langle e_{k+1},\dots,e_n\rangle$ for some $k\in\{1,\dots,n\}$. Let $P'$ be the polyhedron $P\cap\langle e_1,\dots,e_k\rangle$ viewed as a convex set in the space $\langle e_1,\dots,e_k\rangle$ (which is equivalent to $\R^k$). Then $s^*(P)=s^*(P')$.
\end{lemma}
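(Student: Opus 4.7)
The plan is to establish a bijective correspondence between relaxations of $P$ in $\R^n$ and relaxations of $P'$ in $\R^k$ that preserves the split rank. A first observation is that any relaxation $Q$ of $P$ has the same recession cone as $P$ (a standard property of rational polyhedra and their integer hulls, see \cite{sch}), so in particular $\lin Q = \lin P = \langle e_{k+1},\dots,e_n\rangle$. Writing $\pi \colon \R^n \to \langle e_1,\dots,e_k\rangle \cong \R^k$ for the orthogonal projection, this lineality forces
\[
Q = \pi(Q) + \langle e_{k+1},\dots,e_n\rangle,
\]
and in particular $P = P' + \langle e_{k+1},\dots,e_n\rangle$ under the identification of $\pi(P)$ with $P'$.

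For the inequality $s^*(P) \le s^*(P')$, given a relaxation $Q$ of $P$ I would set $Q' := \pi(Q)$, a rational polyhedron in $\R^k$ containing $P'$. Every integer point $y \in Q' \cap \Z^k$ lifts to $(y,0) \in Q \cap \Z^n = P \cap \Z^n$, so $y \in P' \cap \Z^k$, showing that $Q'$ is a relaxation of $P'$. For the reverse inequality, given a relaxation $Q'$ of $P'$ in $\R^k$, I would define $Q := Q' + \langle e_{k+1},\dots,e_n\rangle$ and verify directly that it is a relaxation of $P$ in $\R^n$.

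The heart of the proof is the key claim that whenever $Q = Q' + \langle e_{k+1},\dots,e_n\rangle$ with $Q' \subseteq \R^k$ a rational polyhedron, one has $s(Q) = s(Q')$. To prove it I would analyze the action of an arbitrary split $S = \{x \in \R^n : \beta \le ax \le \beta+1\}$ on $Q$, splitting into cases according to whether $a_j = 0$ for all $j > k$ or not. In the first case, $a = (a',0)$ with $a' \in \Z^k$ primitive and $S = S' + \langle e_{k+1},\dots,e_n\rangle$ for the corresponding split $S' \subseteq \R^k$, and a direct computation gives $\conv(Q \sm \int S) = \conv(Q' \sm \int S') + \langle e_{k+1},\dots,e_n\rangle$. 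In the second case, some $e_j$ with $j > k$ lies in $\lin Q$, so every $x \in Q$ sits on the entire line $x + \R e_j \subseteq Q$, which crosses $\int S$ in a bounded segment and therefore has points outside $\int S$ on both sides; consequently $x \in \conv(Q \sm \int S)$, and such splits contribute nothing. Intersecting over all splits yields $SC(Q) = SC(Q') + \langle e_{k+1},\dots,e_n\rangle$, and iterating gives $SC^m(Q) = SC^m(Q') + \langle e_{k+1},\dots,e_n\rangle$; this equals $P$ if and only if $SC^m(Q') = P'$, so $s(Q) = s(Q')$.

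Combining the key claim with the two constructions then yields $s^*(P) = s^*(P')$. The main (still minor) obstacle I anticipate is the verification in the second case above that splits whose normal is not orthogonal to $\lin Q$ do not cut $Q$ at all; once this is in place, everything else is careful bookkeeping about orthogonal decompositions and iteration of the split closure.
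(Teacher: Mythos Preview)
Your proposal is correct and follows essentially the same approach as the paper: set up the projection $\pi$, observe that relaxations of $P$ correspond bijectively to relaxations of $P'$ via $\pi$ (using that $\lin Q \supseteq \langle e_{k+1},\dots,e_n\rangle$), split the analysis of splits into those whose normal is supported on the first $k$ coordinates (which correspond to splits in $\R^k$) and those which are not (which leave $Q$ unchanged), and conclude that the split closures, hence the split ranks, match. The paper phrases the ``ineffective split'' case slightly differently---it notes that such a split contains no minimal face of $Q$---but this is equivalent to your line argument.
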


\begin{proof}
Let $\pi:\R^n\to\R^k$ be the map that drops the last $n-k$ components of every vector. Note that $\pi(P)=P'$. Furthermore, $\pi$ maps integer points to integer points.
Since every relaxation $Q$ of $P$ is such that $\lin Q\supseteq\langle e_{k+1},\dots,e_n\rangle$, $\pi$ induces a bijection between the relaxations of $P$ and those of $P'$. Also, $\pi$ induces a bijection between the splits of $\R^n$ whose lineality space contains $\langle e_{k+1},\dots,e_n\rangle$ and the splits of $\R^k$. We remark that if $S$ is a split of $\R^n$ whose lineality space contains $\langle e_{k+1},\dots,e_n\rangle$, then $\pi(\conv(Q\setminus\int S))=\conv(\pi(Q)\setminus\int(\pi(S)))$,
while if $\lin S$ does not contain $\langle e_{k+1},\dots,e_n\rangle$ then $\conv(Q \sm \int(S))=Q$ (i.e., $S$ has no effect when applied to a relaxation of $P$), as in this case $S$ does not contain any minimal face of $Q$. We conclude that if $Q$ is a relaxation of $P$ then $\pi(Q)$ is a relaxation of $P'$ with the same split rank. The lemma follows.
\end{proof}

Let $P\subsetneq\R^n$ be an integral polyhedron with $s^*(P)=+\infty$.
We now show that thanks to the above lemma we can reduce to the case $\lin P=\{0\}$. Indeed, if this is not the case, we can assume wlog that $\lin P=\langle e_{k+1},\dots,e_n\rangle$ for some $k\in\{1,\dots,n\}$. If we define $P'$ as in Lemma~\ref{lem:lin}, $P'$ is an integral polyhedron satisfying $\lin P'=\{0\}$ and $s^*(P')=+\infty$. Given a face $F'$ of $P'$ and a nonzero rational subspace $L'\subseteq\R^k$ such that (i)--(ii) are satisfied for $P'$, we have that by setting  $F=F'\times\R^{n-k}$ and $L=L'\times\{0\}^{n-k}$ (which is not contained in $\lin P$), conditions (i)--(ii) of Theorem~\ref{th:main} are satisfied for $P$.

Therefore in the following we assume that $\lin P=\{0\}$ (but $\rec P\ne\{0\}$).
Note that in this case the condition ``$L\not\subseteq\lin P$'' of Theorem~\ref{th:main} simplifies to ``$L\ne\{0\}$''.

A second useful lemma is now stated.

\begin{lemma}\label{lem:rlf}
Let $Q\subseteq\R^n$ be a rational polyhedron, and define $\widetilde Q=Q+\langle\rec Q\rangle$.
Then $Q$ is relatively lattice-free if and only if $\widetilde Q$ is relatively lattice-free.
\end{lemma}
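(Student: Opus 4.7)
Since $Q$ is a rational polyhedron, $\rec Q$ is a rational polyhedral cone and $\langle \rec Q\rangle$ is a rational linear subspace of some dimension $k$. Using the remark in Sect.~\ref{sec:unimod}, I would apply a unimodular transformation (which preserves both $\Z^n$ and the property of being relatively lattice-free) so that $\langle \rec Q\rangle = \R^k\times\{0\}^{n-k}$. Letting $\pi:\R^n\to\R^{n-k}$ denote the projection onto the last $n-k$ coordinates, one then checks that $\widetilde Q = \R^k\times \pi(Q)$, that $\aff Q$ is closed under translation by every element of $\langle \rec Q\rangle$ (since $\aff Q$ is closed under affine combinations and contains $q$ and $q+r$ whenever $q\in Q$, $r\in\rec Q$), and hence $\aff Q = \aff\widetilde Q = \R^k\times \aff\pi(Q)$. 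In particular $\relint \widetilde Q = \R^k\times \relint\pi(Q)$, and $Q$ and $\widetilde Q$ have relative interiors taken with respect to the same affine hull.

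The easy direction is: if $x\in\relint Q$ then $x$ is contained in a relative neighbourhood in $\aff Q = \aff \widetilde Q$ inside $Q\subseteq\widetilde Q$, so $x\in\relint\widetilde Q$. Hence no integer points in $\relint\widetilde Q$ forces none in $\relint Q$.

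For the converse, suppose $z = (z_1,z_2)\in\relint\widetilde Q\cap \Z^n$. Then $z_2\in\relint\pi(Q)\cap\Z^{n-k}$, and the fiber $Q_{z_2}:=\{x\in\R^k : (x,z_2)\in Q\}$ is a non-empty polyhedron in $\R^k$ whose recession cone is $\rec Q$ (identified as a subset of $\R^k$), a rational cone whose linear span is all of $\R^k$. Thus $\rec Q$ is full-dimensional in $\R^k$; the key step is to find an integer point in the interior of $Q_{z_2}$. I would pick any $v\in\int(\rec Q)\cap\Z^k$ (which exists because $\rec Q$ is a full-dimensional rational cone), fix $\varepsilon>0$ with $B(v,\varepsilon)\subseteq \rec Q$, and pick any $q\in Q_{z_2}$. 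Then for $t\ge 0$,
\[
q + tv + B(0,t\varepsilon) \;=\; q + tB(v,\varepsilon) \;\subseteq\; Q_{z_2},
\]
so for $t$ large enough, $q+tv$ is at distance $>\sqrt{k}$ from the complement of $Q_{z_2}$, whence the ball of radius $\sqrt k$ around $q+tv$ lies in $\int Q_{z_2}$ and contains an integer point $z_1'$.

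Finally, I would verify that $(z_1',z_2)\in\relint Q$. Writing $Q=\{(x,y): Ax+By\le b,\,Cy=d\}$ with $\{y:Cy=d\}=\aff\pi(Q)$ (the equalities involve only $y$ because $\R^k\times\{0\}^{n-k}\subseteq\lin(\aff Q)$), the condition $z_1'\in\int Q_{z_2}$ gives $Az_1'+Bz_2<b$ strictly, so the inequalities remain satisfied in a full relative neighbourhood of $(z_1',z_2)$ in $\aff Q$; combined with $z_2\in\relint\pi(Q)$, this yields $(z_1',z_2)\in\relint Q\cap\Z^n$, contradicting that $Q$ is relatively lattice-free. The main obstacle, and the only place a non-trivial argument is needed, is the fiber construction of the integer interior point $z_1'$ using that $\rec Q$ has a rational interior direction.
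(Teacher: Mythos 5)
Your proof is correct in spirit, but it takes a genuinely different and considerably longer route than the paper's. You pass to coordinates via a unimodular transformation, project onto the last $n-k$ variables, identify $\widetilde Q=\R^k\times\pi(Q)$, and then in the fiber $Q_{z_2}$ use the fact that $\rec Q$ has a rational interior direction to walk far enough along it so that a ball of radius $\sqrt k$ fits inside $\int Q_{z_2}$, whence it contains an integer point; finally you argue that this integer point, together with $z_2$, lands in $\relint Q$. This works, though the last step (``$z_1'\in\int Q_{z_2}$ and $z_2\in\relint\pi(Q)$ imply $(z_1',z_2)\in\relint Q$'') needs a slightly more careful treatment of valid inequalities than the description-dependent argument you sketch: the cleanest way is to take an arbitrary valid inequality $cx+dy\le\gamma$ tight at $(z_1',z_2)$ and split on $c=0$ versus $c\ne0$, using full-dimensionality of $Q_{z_2}$ in the latter case.

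The paper's argument avoids all of this. It writes $\rec Q=\cone\{r_1,\dots,r_k\}$ with integer $r_i$, decomposes $\tilde x=x_0+\sum_i\lambda_i r_i$ with $x_0\in\relint Q$ (using $\relint\widetilde Q=\relint Q+\langle\rec Q\rangle$), and then sets $x=\tilde x+\sum_i(1-\ceil{\lambda_i})r_i=x_0+\sum_i(1+\lambda_i-\ceil{\lambda_i})r_i$. The first expression shows $x$ is $\tilde x$ shifted by an integer combination of integer vectors, hence integral; the second shows $x=x_0+(\text{element of }\rec Q)$ since each coefficient $1+\lambda_i-\ceil{\lambda_i}\ge0$, hence $x\in\relint Q$. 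No unimodular normalization, no projection, no flatness-style packing argument, no case analysis on valid inequalities. Your approach does buy one thing — it exhibits an integer point that is ``deep'' in the full-dimensional slice — but none of that strength is needed for the lemma, and it imports auxiliary facts (integer interior points of rational cones, $\sqrt k/2$ covering radius of $\Z^k$, behaviour of $\relint$ under slicing) that the paper sidesteps entirely. If you keep your approach, at least tighten the final step as above; otherwise the paper's direct lattice-translation is the argument of choice here.
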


\begin{proof}
Since $\aff\widetilde Q=\aff Q$ and $Q\subseteq\widetilde Q$, if $\widetilde Q$ is relatively lattice-free then $Q$ is relatively lattice-free as well.

To show the reverse implication, assume that there is an integer point $\tilde x\in\relint\widetilde Q$.
Since $Q$ is a rational polyhedron, we can write $\rec Q=\cone\{r_1,\dots,r_k\}$, where $r_1,\dots,r_k$ are integer vectors.
Then $\langle\rec Q\rangle=\langle \pm r_1,\dots,\pm r_k\rangle$. This implies that we can write $\tilde x=x_0+\sum_{i=1}^k\lambda_ir_i$, where $x_0\in \relint Q$ and $\lambda_1,\dots,\lambda_k\in\R$.
Define $x=x_0+\sum_{i=1}^k(1+\lambda_i-\ceil{\lambda_i})r_i=\tilde x+\sum_{i=1}^k(1-\ceil{\lambda_i})r_i$.
We claim that $x$ is an integer point in $\relint Q$. The integrality of $x$ follows from the fact that $x$ is a translation of $\tilde x$ by an integer combination of the integer vectors $r_1,\dots,r_k$. Furthermore, $x\in\relint Q$ as $x_0\in\relint Q$ and $1+\lambda_i-\ceil{\lambda_i}\ge0$ for $i=1,\dots,k$. Therefore $x$ is an integer point in $\relint Q$, and thus $Q$ is not relatively lattice-free.
\end{proof}

Since $s^*(P)=+\infty$, there is a sequence $(Q_i)_{i\in\N}$ of relaxations of $P$ such that $\sup s(Q_i)=+\infty$.
Define $L_0=\langle\rec P\rangle$. Note that $L_0\ne\{0\}$. Wlog, $L_0=\langle e_{k+1},\dots, e_n\rangle$ for some $k\in\{1,\dots,n\}$.
Also, define $\widetilde P=P+L_0$ and $\widetilde Q_i=Q_i+L_0$ for $i\in\N$.

Since the reverse split rank of $P$ is infinite, the same is true for its reverse CG rank; by Theorem~\ref{th:CG}, this implies that $P$ is relatively lattice-free. Then, by Lemma~\ref{lem:rlf}, $\widetilde P$ is also relatively lattice-free.
If $\relint\widetilde P$ is not contained in the interior of any split, then (i)--(ii) hold with $F=P$ and $L=L_0$.
Therefore in the remainder of the proof we assume that $\relint \widetilde P$ is contained in the interior of some split $S$.

\begin{claim}
$\widetilde Q_i$ is a relaxation of $\widetilde P$ for every $i\in\N$.
\end{claim}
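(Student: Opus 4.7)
The statement has three parts: $\widetilde Q_i$ is a rational polyhedron, $\widetilde P\subseteq \widetilde Q_i$, and $\widetilde P\cap\Z^n=\widetilde Q_i\cap\Z^n$. The first two are immediate since $L_0$ is rational and $P\subseteq Q_i$, so I would focus on the third, and of that only on the inclusion $\widetilde Q_i\cap\Z^n\subseteq\widetilde P$ (the reverse inclusion follows from $\widetilde P\subseteq\widetilde Q_i$).

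The key input is the fact, already used earlier in the paper, that every rational polyhedron has the same recession cone as its integer hull; applied to $Q_i$ with integer hull $P$, this gives $\rec Q_i=\rec P$ and hence $\langle\rec Q_i\rangle=L_0$. I would then fix integer generators $r_1,\dots,r_m$ of the rational cone $\rec P$. Any $\ell\in L_0$ can be split as $\ell=s-s'$ with $s,s'\in\rec P$, simply by separating the positive and negative parts of its coefficients in the $r_j$-expansion.

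Now pick $z\in\widetilde Q_i\cap\Z^n$, write $z=q+\ell$ with $q\in Q_i$ and $\ell\in L_0$, and decompose $\ell=s-s'$ as above. Then $z+s'=q+s\in Q_i+\rec Q_i\subseteq Q_i$. I would turn this into an integer point of $Q_i$ by rounding $s'=\sum_j\mu_j r_j$ (with $\mu_j\ge 0$) up to $s'':=\sum_j\lceil\mu_j\rceil r_j\in\rec P\cap\Z^n$. Since $s''-s'=\sum_j(\lceil\mu_j\rceil-\mu_j)r_j\in\rec P$, the point $z+s''=(z+s')+(s''-s')$ lies in $Q_i$ and is integer, so $z+s''\in Q_i\cap\Z^n=P\cap\Z^n\subseteq P$, giving $z=(z+s'')-s''\in P+L_0=\widetilde P$.

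The only slightly delicate step is this rounding argument, which must simultaneously produce an integer vector, preserve membership in $Q_i$, and lie in $L_0$; it works precisely because $\rec P$ is a rational cone spanning $L_0$, so one can shift by any element of $\rec P$ (in particular, absorb the fractional parts of the $\mu_j$) without leaving $Q_i$ and without leaving $L_0$.
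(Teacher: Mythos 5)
Your proof is correct and is essentially the paper's: the paper establishes the claim by invoking the rounding argument of Lemma~\ref{lem:rlf}, which---exactly as you do---shifts the integer point by an integer vector of $L_0$ (obtained by rounding the coefficients in the $r_j$-expansion) so as to remain in $Q_i+\rec Q_i=Q_i$, hence land in $Q_i\cap\Z^n=P\cap\Z^n$, and then translates back along $L_0$. Your decomposition $\ell=s-s'$ with only $s'$ rounded up is a minor bookkeeping variant of the paper's replacement $\lambda_j\mapsto 1+\lambda_j-\lceil\lambda_j\rceil$; both rest on the same two facts you correctly single out, namely $\rec Q_i=\rec P$ and the integrality of the generators of $\rec P$.
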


\begin{proof}
Fix $i\in\N$ and assume that $\widetilde Q_i$ contains some integer point $\tilde x$; we prove that $\tilde x\in \widetilde P$. By using arguments that are similar to those in the proof of Lemma~\ref{lem:rlf}, $Q_i$ contains an integer point $x$ of the form $x=\tilde x+r$, where $r$ is an integer vector in $L_0$. Since $Q_i$ is a relaxation of $P$, we have $x\in P$. But then $\tilde x=x-r$ is in $\widetilde P$.
\end{proof}

Assume that $s^*(\widetilde P)<+\infty$, say $s^*(\widetilde P)=t$. Then, for every $i\in\N$, applying $t$ times the split closure operator to $\widetilde Q_i$ yields $\widetilde P$. If the same splits are applied to $Q_i$, we obtain a relaxation of $P$ which is contained in $\widetilde P$, which in turn is contained in $S$.
In other words, $t$ rounds of the split closure operator are sufficient to make $Q_i$ contained in $S$ for every $i\in\N$. As in the proof for polytopes (Sect.~\ref{sec:c}), this implies that at least one of the two faces of $P$ induced by the boundary of $S$ ($P^0$, say) has infinite reverse split rank. By induction, there exist a face $F$ of $P^0$ and a nonzero rational subspace $L$ satisfying (i)--(ii). The same choice of $F$ and $L$ is also good for $P$.

Therefore we now assume that $s^*(\widetilde P)=+\infty$. Since $\lin\widetilde P\ne\{0\}$, we can replicate the argument in the discussion following Lemma~\ref{lem:lin} and conclude that there exist a face $\widetilde F$ of $\widetilde P$ and a rational subspace $\widetilde L\not\subseteq\lin \widetilde P$ such that (i)--(ii) are fulfilled for $\widetilde P$. Let $H$ be any supporting hyperplane for $\widetilde F$. We now verify that the face $F$ of $P$ supported by $H$ and the space $L=\widetilde L$ satisfy the conditions for $P$.
To show that condition~(i) is satisfied, observe that a split contains $\relint(F+L)$ in its interior if and only if it contains $\relint(\widetilde F+L)$ in its interior, as $\widetilde F=F+\langle\rec F\rangle$; to check condition~(ii), one can use Lemma~\ref{lem:rlf}.
This concludes the proof of Theorem~\ref{th:main} for unbounded polyhedra.

\begin{remark}\label{rem:face}
Using Observation~\ref{obs:face}, one verifies that if $F$ and $L$ are obtained as above then $F+L$ is a face of $P+L$.
\end{remark}

\section{Connection with the mixed-integer case}\label{sec:mixed}

In this section we discuss a link between the concept of infinite reverse split rank in the pure integer case and that of infinite split rank in the mixed-integer case.

Fix $k\in\{1,\dots,n\}$, and consider $x_1,\dots,x_k$ as integer variables and $x_{k+1},\dots,x_n$ as continuous variables.
A split $S\subseteq\R^n$ is now defined as a set of the form $S=\{x\in\R^n:\beta\le ax\le\beta+1\}$ for some primitive vector $a\in\Z^k\times\{0\}^{n-k}$ and some integer number $\beta$. Note that every set of this type is also a split in the pure integer sense. The split closure of $Q$ is defined as in the pure integer case. The split rank of $Q$ is the minimum integer $k$ such that the $k$-th split closure of $Q$ coincides with $Q_I=\conv(Q\cap(\Z^k\times\R^{n-k}))$. Unlike the pure integer case, in the mixed-integer case such a number $k$ does not always exist; in other words, there are rational polyhedra with infinite split rank, see e.g.~\cite{CKS}. Note in fact that the example given in \cite{CKS} is obtained from the polytope presented in Sect.~\ref{sec:intro} by considering $x_3$ as the unique continuous variable and ``enlarging'' it along $x_3$. (We will develop this idea below.)
We remark, however, that the split closure of a rational polyhedron $Q$ asymptotically converges to $Q_I$ (with respect to the Hausdorff distance), as shown in \cite{delpia-weis}.

Given a rational polyhedron $Q$ and a valid inequality $cx\le\delta$ for its mixed-integer hull $Q_I$, we say that the split rank of $cx\le\delta$ is $k$ if the inequality is valid for the $k$-th split closure of $Q$ but not for the $(k-1)$-th split closure of $Q$.
The following theorem, which was proven in~\cite{delpia} and extends results presented in~\cite{basu-corn-margot}, characterizes the valid inequalities for $Q_I$ that have infinite split rank.

\begin{theorem} \label{th:rank}
Let $Q\subseteq\R^n$ be a rational polyhedron. For some fixed $k\in\{1,\dots,n\}$, define $Q_I=\conv(Q\cap(\Z^k\times\R^{n-k}))$ and let $\pi$ denote the orthogonal projection onto the space $\ang{e_1,\dots,e_k}$.
Let $cx \le \delta$ be a valid inequality for $Q_I$.
Then $cx \le \delta$ has infinite split rank for $Q$ if and only if
there exists a face $M$ of $\pi(\{x \in Q_I : cx = \delta\})$ such that $M \cap \pi(\{x \in Q : cx > \delta\})\neq \varnothing$ and $\relint M$ is not contained in the interior of any split.
\end{theorem}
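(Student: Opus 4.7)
The proof splits into two directions, which I describe in turn, then point out the main obstacle.

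For sufficiency, suppose such a face $M$ exists and pick $\bar q\in Q$ with $\pi(\bar q)\in M$ and $c\bar q>\delta$ (possible by the assumption $M\cap \pi(\{x\in Q:cx>\delta\})\ne\varnothing$). I plan to show that for every $N$ the $N$-th split closure of $Q$ still contains some point $x$ with $cx>\delta$. The key leverage is that every split $S=\{x:\beta\le ax\le\beta+1\}$ has $a\in\Z^k\times\{0\}^{n-k}$, so the continuous directions $e_{k+1},\ldots,e_n$ all lie in $\lin S$ and cannot be cut across. I would (i) perturb $\bar q$ so its projection lies in $\relint M$ (using that $\pi(\{x\in Q:cx>\delta\})$ is relatively open), (ii) use the continuous directions as ``amplifiers'' around $\bar q$, in the spirit of the example $Q_t$ from Section~\ref{sec:intro}, to produce arbitrarily large $c$-slack in $Q$ above $\{cx=\delta\}$ near $\bar q$, and (iii) adapt the Case~1 / Case~2 analysis from Section~\ref{sec:suff}. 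In that argument, the hypothesis that $\relint(F+L)$ misses the interior of every split guaranteed that only a bounded amount of slack is eroded by one round of split closure; here $\relint M$ plays exactly the role of $\relint(F+L)$, with $M$ sitting in the projection onto the integer coordinates.

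For necessity, assume no face $M$ of $T:=\pi(\{x\in Q_I:cx=\delta\})$ has the stated properties: each face satisfies either (a) $M\cap \pi(\{x\in Q:cx>\delta\})=\varnothing$ or (b) $\relint M\subseteq \int S_M$ for some split $S_M$. I would induct on $\dim T$. If $T$ itself falls in case (a) then $cx\le\delta$ is already valid on $Q$ (rank $0$). Otherwise $\relint T\subseteq \int S_T$: one application of the split $S_T$ replaces $Q$ by a polyhedron contained in $S_T$, after which Lemma~\ref{lem:split-faces} bounds the split rank of $cx\le\delta$ by one plus the maximum rank over the two faces on the boundary of $S_T$. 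Each such boundary face corresponds to a strict drop in the dimension of the associated projection, and the property ``no good $M$'' is inherited by these restrictions, so induction closes the argument; Corollary~\ref{cor:ub-chv} controls the finitely many preliminary CG/split rounds needed to fit $Q$ inside $S_T$ when $Q$ is not already contained there.

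The main obstacle will be the sufficiency direction, specifically the quantitative bookkeeping showing that each round of split closure erodes only a bounded amount of $c$-slack. This is where the maximal lattice-free structure of Section~\ref{sec:lf} and the flatness-type estimates of Section~\ref{sec:basics} must enter, paralleling Section~\ref{sec:suff} but with the twist that the thickening of $M$ is provided by the continuous coordinates rather than by added integer directions. The delicate transfer step is verifying that ``$\relint M$ avoids the interior of every split in $\R^k$'' is precisely the pure-integer hypothesis needed to invoke the split-rank-lifting argument through the projection $\pi$; managing the interplay between the $k$ integer and $n-k$ continuous coordinates throughout this calculation is the core technical difficulty.
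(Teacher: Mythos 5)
This statement is not proved in the paper at all: it is Theorem~\ref{th:rank}, which the paper explicitly attributes to \cite{delpia} (``The following theorem, which was proven in~\cite{delpia} and extends results presented in~\cite{basu-corn-margot}\dots''). There is therefore no in-paper proof for your attempt to be compared against; the paper simply imports this characterization to derive Propositions~\ref{prop:mixed} and~\ref{mi2}. If you want to engage with this result, the place to look is Del Pia's ``On the rank of disjunctive cuts'' and the Basu--Cornu\'ejols--Margot paper on intersection cuts with infinite split rank.

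That said, a few concrete gaps in your sketch are worth flagging. In the necessity direction, your base case is wrong: ``$M\cap\pi(\{x\in Q:cx>\delta\})=\varnothing$ for $M=T$'' does not imply that $cx\le\delta$ is already valid for $Q$; the set $\pi(\{x\in Q:cx>\delta\})$ can be nonempty and simply miss $T$, so the inequality can have positive (even infinite) rank in that case, and the condition has to be used more carefully. Second, you invoke Corollary~\ref{cor:ub-chv} to ``fit $Q$ inside $S_T$,'' but that corollary is stated for bounded relaxations of an integral polytope and gives no control for an unbounded rational $Q$; the analogous control in the mixed-integer setting is not automatic since the mixed-integer split closure need not converge finitely to $Q_I$. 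Third, Lemma~\ref{lem:split-faces} as stated is for pure-integer splits and for a polyhedron already contained in the split; you would need a mixed-integer analogue and a separate argument to even reach that containment. Finally, your sufficiency sketch correctly identifies $\relint M$ as playing the role of $\relint(F+L)$ from Section~\ref{sec:suff}, and the mechanism of ``continuous coordinates as thickening'' is indeed the right intuition, but as you note yourself the quantitative erosion estimate is the whole content of the proof and is not supplied; that argument in the source paper goes through an analysis of the inclusion-certificates of fractional points and is not a straightforward port of Section~\ref{sec:suff}.
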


The above result, compared with Theorem~\ref{th:main}, suggests that there is a connection between the integral polyhedra with infinite reverse split rank and the rational polyhedra with infinite split rank in the mixed-integer case. We propose such a connection below.

\begin{proposition}\label{prop:mixed}
Let $P\subseteq\R^n$ be an integral polyhedron with $s^*(P)=+\infty$. Let $F$ and $L$ be as in Theorem~\ref{th:main}, where we assume wlog $L=\ang{e_{k+1},\dots,e_n}$ for some $k\in\{1,\dots,n\}$.
Denote by $\pi$ the orthogonal projection onto the space $\ang{e_1,\dots,e_k}$, and define $\widetilde P=\pi(P)$.
Choose $\bar x\in\relint F$ and define $\tilde x=\pi(\bar x)$.
Then the rational polyhedron $Q=\clconv(\widetilde P,\tilde x + e_{k+1},\dots,\tilde x + e_n)$ has infinite split rank, where variables $x_1,\dots,x_k$ are integer and variables $x_{k+1},\dots,x_n$ are continuous.
\end{proposition}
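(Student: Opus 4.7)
The plan is to apply Theorem~\ref{th:rank} to the inequality $cx \le 0$ with $c = (0, \dots, 0, 1, \dots, 1) \in \Z^n$ (ones in positions $k+1, \dots, n$), and derive infinite split rank from it. As a first step, I would translate conditions~(i) and~(ii) of Theorem~\ref{th:main} into statements about $\widetilde P$ and $\widetilde F = \pi(F)$ living in the subspace $\ang{e_1, \dots, e_k}$. Because $L = \ang{e_{k+1}, \dots, e_n}$, a direct computation gives $P + L = \widetilde P + L$, and the faces of $P + L$ are exactly the sets $\widetilde G + L$ with $\widetilde G$ a face of $\widetilde P$; in particular $\widetilde F$ is a face of $\widetilde P$. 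Using the lift $c = (\widetilde c, 0)$ of supporting hyperplanes, condition~(ii) translates to: every face $\widetilde G$ of $\widetilde P$ containing $\widetilde F$ is relatively lattice-free in $\R^k$, and~(i) to: $\relint \widetilde F$ is not contained in the interior of any split of $\R^k$. Assuming for simplicity that $P$ is bounded, integrality of $F$ yields $\widetilde F = \conv(\pi(F \cap \Z^n)) \subseteq \widetilde F_I := \conv(\widetilde F \cap \Z^k) \subseteq \widetilde F$, so $\widetilde F = \widetilde F_I$; moreover $\tilde x \in \relint \widetilde F$ cannot be integer, by relative lattice-freeness of $\widetilde F$.

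Next, I would show that $cx \le 0$ is valid for $Q_I$. Any $(y, z) \in Q$ can be written with $z \ge 0$, $\mu := \sum_{i=k+1}^n z_i \in [0, 1]$, and $y = (1-\mu)\hat p + \mu \tilde x$ for some $\hat p \in \widetilde P$ (with $y = \tilde x$ when $\mu = 1$). Suppose $(y, z) \in Q \cap (\Z^k \times \R^{n-k})$ and $\mu > 0$. If $\mu = 1$, then $y = \tilde x \notin \Z^k$, a contradiction. Otherwise $\mu \in (0, 1)$ and $y$ lies in the relative interior of the minimal face $\widetilde G$ of $\widetilde P$ containing the segment $[\hat p, \tilde x]$; since $\tilde x \in \relint \widetilde F$, $\widetilde G \supseteq \widetilde F$, so by the translated~(ii) $\widetilde G$ is relatively lattice-free, contradicting $y \in \Z^k \cap \relint \widetilde G$. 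Hence $\mu = 0$, and taking convex hull gives $Q_I \subseteq \{cx = 0\} \subseteq \{cx \le 0\}$.

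Finally, I would verify the hypotheses of Theorem~\ref{th:rank}. Since $Q_I \subseteq \{cx = 0\}$, $\pi(\{x \in Q_I : cx = 0\}) = \pi(Q_I) = \conv(\widetilde P \cap \Z^k) = \widetilde P_I$. I take $M = \widetilde F = \widetilde F_I$, which is a face of $\widetilde P_I$ (supported by the same hyperplane that supports $\widetilde F$ in $\widetilde P$). By the translated~(i), $\relint M$ is not contained in the interior of any split. Moreover $\tilde x + e_{k+1} \in Q$ satisfies $c(\tilde x + e_{k+1}) = 1 > 0$ and $\pi(\tilde x + e_{k+1}) = \tilde x \in \relint \widetilde F \subseteq M$, so $M \cap \pi(\{x \in Q : cx > 0\}) \ni \tilde x$. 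Theorem~\ref{th:rank} then yields infinite split rank for $cx \le 0$, hence $s(Q) = +\infty$.

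The main obstacle I anticipate is the unbounded case: if $\widetilde P$ has nonzero recession, the situation $\mu = 1$ may produce $y = \tilde x + r \in \Z^k$ for some $r \in \rec \widetilde P \setminus \{0\}$, violating the validity of $cx \le 0$; additionally the identification $\widetilde F = \widetilde F_I$ requires more care in the presence of recession. I expect this can be handled by a reduction to the bounded case along the lines of Section~\ref{sec:nec-unbounded} (quotienting out appropriate recession directions), or by a refined cutting plane that respects the recession structure of $\widetilde P$.
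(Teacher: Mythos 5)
Your overall strategy is the same as the paper's (establish that the mixed-integer hull of $Q$ is $\widetilde P$ and then invoke Theorem~\ref{th:rank}), but you work with the inequality $cx=\sum_{i>k}x_i\le 0$, so that $\pi(\{x\in Q_I:cx=0\})=\widetilde P$, whereas the paper picks a face-defining inequality $cx\le\delta$ for $\overline F$ (the minimal face of $\widetilde P$ containing $\widetilde F$) with $c\notin L^\bot$, so that $\pi(\{x\in Q_I:cx=\delta\})=\overline F$. Your validity argument ($\mu=0$) is essentially the paper's proof that $Q_I=\widetilde P$, phrased directly in $\R^k$ instead of via an auxiliary integer point of $P+L$; both are correct in the bounded case. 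The unbounded ``obstacle'' you anticipate is not real: if $\mu=1$ and $y=\tilde x+r$ with $r\in\rec\widetilde P$, then for any valid $ax\le\beta$ tight at $y$ one has $ar\le 0$ and $a\tilde x\le\beta$, forcing $ar=0$ and hence $\tilde x\in\widetilde G$ for the minimal face $\widetilde G$ of $\widetilde P$ containing $y$; then $\widetilde F\subseteq\widetilde G$ and the translated condition (ii) gives the same contradiction, so $y\notin\Z^k$. No separate reduction along the lines of Section~\ref{sec:nec-unbounded} is needed.

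The genuine gap is the assertion that $\widetilde F=\pi(F)$ is a face of $\widetilde P$ (equivalently of $\widetilde P_I$), and the consequent choice $M=\widetilde F$. The correspondence you quote is correct — every face of $P+L=\widetilde P+L$ has the form $\widetilde G+L$ for a face $\widetilde G$ of $\widetilde P$ — but it does not say that $\widetilde F$ is such a $\widetilde G$: that would require $F+L$ to be a face of $P+L$, which is \emph{not} a consequence of conditions (i)--(ii) alone. (The paper records it only as a property of the specific $F,L$ produced by the procedure — see Observation~\ref{obs:face} and Remark~\ref{rem:face} — not of arbitrary $F,L$ satisfying (i)--(ii), and the proposition is stated for the latter.) For an arbitrary face $F$ of $P$, $F+L$ can fail to be a face of $P+L$, and then $\widetilde F$ is not a face of $\widetilde P$, so $M=\widetilde F$ is not an admissible choice in Theorem~\ref{th:rank}. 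The paper avoids this by taking $M=\overline F$. Your argument can be patched the same way: $\overline F$ is also the minimal face of $\widetilde P$ containing $\tilde x$, so $\tilde x\in\relint\overline F$; since $\tilde x\in\relint\widetilde F\cap\relint\overline F$ and $\widetilde F\subseteq\overline F$, one gets $\relint\widetilde F\subseteq\relint\overline F$, so no split can contain $\relint\overline F$ in its interior without also containing $\relint\widetilde F$ (violating the translated (i)); and $\tilde x\in\overline F\cap\pi(\{x\in Q:cx>0\})$. With $M=\overline F$, Theorem~\ref{th:rank} applies and the conclusion follows.
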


\begin{proof}
Note that $\widetilde P$ is an integral polyhedron. We claim that $Q_I=\widetilde P$, where $Q_I=\conv(Q\cap(\Z^k\times\R^{n-k}))$.
If $x$ is an integer point in $\widetilde P$, then clearly $x\in Q_I$; since $\widetilde P$ is an integral polyhedron, this implies that $\widetilde P\subseteq Q_I$.
Assume by contradiction that $\widetilde P\subsetneq Q_I$. Then $Q$ contains a point $w \in (\Z^k\times\R^{n-k})\sm \widetilde P$.
Note that $z:=w + \sum_{i=k+1}^n \lambda_i e_i \in (P+L) \cap \Z^n$ for some $\lambda_{k+1},\dots,\lambda_n\in\R$. We can then proceed as in the proof of Claim~\ref{claim:relax} to obtain a contradiction.


Define $\widetilde F=\pi(F)$ and let $\overline F$ be the minimal face of $\widetilde P$ containing $\widetilde F$.
Let $cx\le\delta$ be an inequality defining face $\overline F$ of $\widetilde P$, where wlog $c\notin L^\bot$.
Using Theorem~\ref{th:rank}, we show below that the inequality $cx\le\delta$ has infinite split rank for $Q$, thus implying that $Q$ has infinite split rank.

Note that $\pi(\{x \in Q_I : cx = \delta\})=\overline F$; we choose $M$ to be this set. Since $c\notin L^\bot$, the set $M \cap \pi(\{x \in Q : cx > \delta\})$ contains $\tilde x$ and thus it is nonempty. In order to apply Theorem~\ref{th:rank}, it remains to show that $\relint M$ (i.e., $\relint \overline F$) is not contained in the interior of any split. Assume by contradiction that there is a split $S$ such that $\relint \overline F\subseteq \int S$. Since $\overline F\subseteq L^\bot$, we can assume that $L\subseteq \lin S$.
Since $\overline F$ and $\widetilde F$ have the same dimension, and $\widetilde F\subseteq\overline F$, we have that $\relint(\widetilde F)\subseteq\int S$.
Then $\relint(F+L)\subseteq \int S$, a contradiction to condition~(i) of Theorem~\ref{th:main}.

\end{proof}

One might wonder why in Proposition~\ref{prop:mixed} the polyhedron $Q$ is not defined simply as $\clconv(P,\bar x + e_{k+1},\dots,\bar x + e_n)$, or perhaps $\clconv(P,\bar x + \lambda e_{k+1},\dots,\bar x +\lambda e_n)$ for some $\lambda>0$.
In fact, with this definition $Q$ might have finite split rank. For instance, let $P\subseteq\R^3$ be defined as the convex hull of the points
\[(0,0,0),\:(2,0,0),\: (0,2,0),\: (1,1,1),\: (1,1,-1).\]
$P$ is an integral polyhedron with infinite reverse split rank, as shown by the face $F=P$ and the linear space $L=\ang{e_3}$. Take $k=2$. We claim that if we choose any $\bar x\in\int P$ and any $\lambda>0$, then the polyhedron $Q=\clconv(P,\bar x+\lambda e_3)=\conv(P,\bar x+\lambda e_3)$ has finite split rank.
To see this, observe that $Q_I=P$ has five facets, defined by the following inequalities:
\begin{align*}
-x_1 \phantom{{}+x_2}+x_3&\le0\\
-x_1 \phantom{{}+x_2}-x_3&\le0\\
-x_2+x_3&\le0\\
-x_2-x_3&\le0\\
x_1+x_2\phantom{{}+x_3}&\le2.
\end{align*}
The last inequality is valid for $Q$, thus its split rank is zero. One verifies that for each of the first four inequalities there is no $M$ satisfying the conditions of Theorem~\ref{th:rank}; therefore all these inequalities have finite split rank. It follows that $Q$ has finite split rank.

We now present a result which is, in a sense, the inverse of Proposition~\ref{prop:mixed}.
In order to prove it, we will use of the following lemma, shown in \cite[Lemma~2.1]{delpia}.

\begin{lemma} \label{lem: relint}
Let $Q\subseteq\R^n$ be a rational polyhedron. For some fixed $k\in\{1,\dots,n\}$, let $\pi$ denote the orthogonal projection onto the space $\ang{e_1,\dots,e_k}$.
Let $cx \le \delta$ be an inequality, and let $M$ be a polyhedron contained in $\pi (\{x \in Q : cx \ge \delta\})$.
If $M \cap \pi (\{x \in Q : cx > \delta\}) \neq \varnothing$, then $\relint M \subseteq \pi (\{x \in Q : cx > \delta\})$.
\end{lemma}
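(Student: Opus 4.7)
The plan is a direct convex combination argument: given $\bar y \in \relint M$, I will construct a single preimage $\bar x \in Q$ with $\pi(\bar x) = \bar y$ and $c\bar x > \delta$, by combining a preimage of a witness from $M \cap \pi(\{x \in Q : cx > \delta\})$ with a preimage of a second, suitably chosen point of $M$. The relative interior hypothesis is used to make $\bar y$ sit strictly in the interior of a segment of $M$ having the witness as one endpoint.

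Concretely, I would first pick any $y_0 \in M \cap \pi(\{x \in Q : cx > \delta\})$, which is nonempty by hypothesis, together with a preimage $x_0 \in Q$ satisfying $cx_0 > \delta$ strictly. If $y_0 = \bar y$ the conclusion is immediate, so assume $y_0 \neq \bar y$. Because $\bar y \in \relint M$, the segment from $y_0$ through $\bar y$ can be extended slightly past $\bar y$ while remaining in $M$: there exists $\lambda > 1$ with $y_1 := y_0 + \lambda(\bar y - y_0) \in M$, and by construction $\bar y = (1 - 1/\lambda)\,y_0 + (1/\lambda)\,y_1$. Since $y_1 \in M \subseteq \pi(\{x \in Q : cx \ge \delta\})$, it admits a preimage $x_1 \in Q$ with $cx_1 \ge \delta$.

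Setting $\bar x := (1 - 1/\lambda)\,x_0 + (1/\lambda)\,x_1$, convexity of $Q$ gives $\bar x \in Q$, linearity of $\pi$ gives $\pi(\bar x) = \bar y$, and linearity of $c$ gives $c\bar x = (1 - 1/\lambda)\,cx_0 + (1/\lambda)\,cx_1 > \delta$, where the strict inequality survives because $1 - 1/\lambda > 0$ and $cx_0 > \delta$. Hence $\bar y = \pi(\bar x) \in \pi(\{x \in Q : cx > \delta\})$, proving the lemma. The only subtle point is guaranteeing $\lambda > 1$ strictly so that the weight on $x_0$ is positive, which is precisely what the hypothesis $\bar y \in \relint M$ buys us; beyond that there is no genuine obstacle, and in particular the rationality of $Q$ and the polyhedrality of $M$ play no role in the argument.
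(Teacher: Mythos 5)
Your argument is correct, and the only slightly delicate step---extending the segment from $y_0$ through $\bar y$ slightly past $\bar y$ while staying in $M$, which requires $\bar y \in \relint M$ and $y_0 \in M$---is a standard fact about relative interiors of convex sets, so the construction of $y_1$ with $\lambda > 1$ is justified. The rest is a clean convex-combination computation: the preimage $\bar x = (1-1/\lambda)x_0 + (1/\lambda)x_1$ lies in $Q$ by convexity, projects to $\bar y$ by linearity, and satisfies $c\bar x > \delta$ because the weight $1-1/\lambda$ on the strict witness $x_0$ is positive. You are also right that rationality of $Q$ and polyhedrality of $M$ are irrelevant; the argument works for arbitrary convex sets.

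Note that the paper does not actually prove this lemma: it is quoted with a citation to \cite[Lemma~2.1]{delpia}, so there is no internal proof to compare against. Your proof is the natural and essentially unavoidable one for this statement, and any proof in the cited reference is almost certain to follow the same convex-combination idea. One cosmetic remark: you silently use that $\pi$, being a linear surjection onto $\ang{e_1,\dots,e_k}$, maps $Q$ onto $\pi(Q)$ so that preimages exist in $Q$ for every point of $\pi(\{x\in Q: cx\ge\delta\})$; this is immediate from the definition of image but worth a half-sentence if you write it up.
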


\begin{proposition} \label{mi2}
Let $Q\subseteq\R^n$ be a rational polyhedron.
For some fixed $k\in\{1,\dots,n\}$, define $Q_I=\conv(Q\cap(\Z^k\times\R^{n-k}))$ and let $\pi$ denote the orthogonal projection onto the space $\ang{e_1,\dots,e_k}$.
Let $cx \le \delta$ be a valid inequality for $Q_I$ with infinite split rank for $Q$.
Then $\pi(\{x \in Q_I : cx = \delta\})$ has infinite reverse split rank in the space $\R^n$, where all variables are integer.
\end{proposition}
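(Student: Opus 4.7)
My plan is to verify conditions~(i) and~(ii) of Theorem~\ref{th:main} for the rational polyhedron $P^* := \pi(\{x \in Q_I : cx = \delta\})$, viewed in $\R^n$ under the embedding $\ang{e_1,\dots,e_k} \hookrightarrow \R^n$, taking $F := M$ the face provided by Theorem~\ref{th:rank} and $L := \ang{e_{k+1},\dots,e_n}$. The case $k = n$ is vacuous, since in the pure-integer setting every rational polyhedron has finite split rank and the hypothesis on $cx \le \delta$ is impossible; hence we may assume $k < n$, which yields $L \ne \{0\}$. Because $P^* \subseteq \R^k \times \{0\}^{n-k}$ while $L = \{0\}^k \times \R^{n-k}$, we have $L \not\subseteq \lin P^*$.

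For (i), suppose some split $S = \{x \in \R^n : \beta \le ax \le \beta+1\}$ satisfies $\relint(M+L) \subseteq \int S$. Since $M+L$ is unbounded along every direction of $L$, the primitive integer vector $a$ must be orthogonal to $L$, so $a \in \Z^k \times \{0\}^{n-k}$; thus $S = \pi^{-1}(S')$ for a split $S'$ of $\R^k$, and $\relint M \subseteq \int S'$ in $\R^k$, contradicting the property of $M$ guaranteed by Theorem~\ref{th:rank}.

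For (ii), let $G$ be any face of $P^*$ containing $M$. Because $G \subseteq \R^k \times \{0\}^{n-k}$ and $L = \{0\}^k \times \R^{n-k}$, one checks that $G + L$ is relatively lattice-free in $\R^n$ if and only if $\relint G$ contains no integer point of $\R^n$. Assume for contradiction that $\bar y \in \relint G \cap \Z^n$. Since $M \cap \pi(\{x \in Q : cx > \delta\}) \ne \varnothing$ by Theorem~\ref{th:rank} and $M \subseteq G$, Lemma~\ref{lem: relint} gives $\relint G \subseteq \pi(\{x \in Q : cx > \delta\})$; hence $\bar y = \pi(\tilde y)$ for some $\tilde y \in Q$ with $c\tilde y > \delta$. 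The first $k$ coordinates of $\tilde y$ agree with those of $\bar y$ and are therefore integer, so $\tilde y \in Q \cap (\Z^k \times \R^{n-k}) \subseteq Q_I$, contradicting the validity of $cx \le \delta$ on $Q_I$.

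With (i)--(ii) verified, I would conclude $s^*(P^*) = +\infty$ by the sufficiency direction of Theorem~\ref{th:main}. The main obstacle I anticipate is that $P^*$ need not be integral, whereas Sect.~\ref{sec:suff} proves sufficiency for integral polyhedra; in particular, Case~2 of that argument uses the integer-vertex / integer-recession decomposition of $F$ to extract a point $w \in F$ with $aw \le \beta - 1$. I would handle this either by checking that Claim~\ref{claim:relax} and the geometric estimate of Case~2 survive in the rational setting (using rational vertex and integer-recession representations of $M$ together with a uniform rational contraction factor), or by reducing to the integer hull $\widetilde P := \conv(P^* \cap \Z^n)$, showing that suitable $F$ and $L$ still satisfy (i)--(ii) for $\widetilde P$, and then lifting the relaxations of $\widetilde P$ produced by the sufficiency proof to relaxations of $P^*$ via the operation $Q \mapsto \clconv(Q \cup P^*)$, after checking that this operation preserves integer hulls.
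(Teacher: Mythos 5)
Your proof takes the same route as the paper's: apply Theorem~\ref{th:rank} to obtain the face $M$, set $F=M$ and $L=\ang{e_{k+1},\dots,e_n}$, and verify conditions~(i) and~(ii) of Theorem~\ref{th:main}. The verifications, including the use of Lemma~\ref{lem: relint} to get $\relint G\subseteq\pi(\{x\in Q:cx>\delta\})$ and the observation that no point of $\Z^k$ can lie in that projection, are correct and match the paper's.

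However, the ``main obstacle'' you anticipate at the end is not a real one, and the two workarounds you sketch are unnecessary (and the first would actually fail, since the decomposition~\eqref{eq: dec} used in Case~2 of Sect.~\ref{sec:suff} genuinely requires an integer point in each minimal face). The polyhedron $P^*=\pi(\{x\in Q_I:cx=\delta\})$ \emph{is} an integral polyhedron. Indeed, $\{x\in Q_I:cx=\delta\}$ is a face of $Q_I=\conv\bigl(Q\cap(\Z^k\times\R^{n-k})\bigr)$, and every nonempty face of $Q_I$ contains a point of $Q\cap(\Z^k\times\R^{n-k})$: any point of the face is a finite convex combination of points of $Q\cap(\Z^k\times\R^{n-k})$, and tightness of the defining inequality forces each participating point into the face. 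Furthermore, for a linear map $\pi$ every nonempty face of $\pi(F)$ is the image of a face of $F$ (the preimage of the face, intersected with $F$, is a face of $F$ that projects onto it). Hence every face of $P^*$ contains a point of $\Z^k$, i.e.\ an integer point once $P^*$ is embedded in $\R^k\times\{0\}^{n-k}\subseteq\R^n$, which is exactly integrality of $P^*$. The paper relies on this implicitly when invoking Theorem~\ref{th:main}; simply recording it closes the gap in your write-up.
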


\begin{proof}
By Theorem \ref{th:rank}, there exists a face $M$ of $P := \pi(\{x \in Q_I : cx = \delta\})$ such that $M \cap \pi (\{x \in Q : cx > \delta\}) \neq \varnothing$ and $\relint M$ is not contained in the interior of any split (in the mixed-integer sense).
Let $L = \ang{e_{k+1},\dots,e_n}$ and note that $L\not\subseteq\lin P$; moreover, $\relint (M+L)$ is not contained in the interior of any split (in the pure integer sense).

Let $G$ be a face of $P$ that contains $M$.
Note that $G$ is contained in $\pi (\{x \in Q : cx \ge \delta\})$.
As $G \cap \pi (\{x \in Q : cx > \delta\}) \neq \varnothing$, it follows by Lemma~\ref{lem: relint} that $\relint G \subseteq \pi (\{x \in Q : cx > \delta\})$.
The set $\{x \in Q : cx > \delta\}$ contains no point with the first $k$ components integer, and thus its projection contains no integer point, implying that both $G$ and $G+L$ are relatively lattice-free.
Hence by Theorem~\ref{th:main} (with $F=M$), $s^*(P)=+\infty$.

\end{proof}

\section{Concluding remarks}\label{sec:concl}

\subsection{On the dimension of $L$}

As illustrated in the introduction, Theorem~\ref{th:main} has strong similarities with Theorem~\ref{th:CG}, which characterizes the integral polyhedra with infinite reverse CG rank. One of the differences between the two statements is that in Theorem~\ref{th:CG} the subspace $L$ has dimension one. We show below that $L$ cannot be assumed to have dimension one in Theorem~\ref{th:main}.

Consider the integral polytope $P$ in $\R^4$ defined by
\[P = \conv\{(0,0,0,0),(1,0,0,0), (0,1,0,0)\}.\]
Note that $P$ lives in the linear subspace $\R^2 \times \{0\}^2$.

First we show that $P$ has infinite reverse split rank.
In order to do so, by our main result, it is sufficient to give a nonzero rational linear subspace $L\subseteq\R^4$ such that
$P+L$ is relatively lattice-free and $\relint(P+L)$ is not contained in the interior of any split.

Let $L$ be the linear subspace of $\R^4$ generated by vectors
$$
v^1 = (1/2,0,1,0), \qquad v^2 = (0,1/2,0,1).
$$
Consider the polytope $P'$ obtained from $P$ by projecting out variables $x_3$ and $x_4$, i.e., $P' = \conv\{(0,0),(1,0),(0,1)\}$.
(See Fig.~\ref{fig:v}(a) for the drawings of $P'$ and the lattice $\Z^2$.)
Consider also the lattice $Y$ in $\R^2$ obtained as the projection of $\Z^4$ onto $\R^2 \times \{0\}^2$ by means of $L$.
More formally, a point $y \in \R^2$ is in $Y$ if and only if there exists $\ell \in L$ such that $(y,0,0)+\ell \in \Z^4$.
It can be checked that $Y$ is the lattice $\frac{1}{2}\Z^2$.
(See Fig.~\ref{fig:v}(b) for the drawing of $P'$ and the lattice $Y$.)

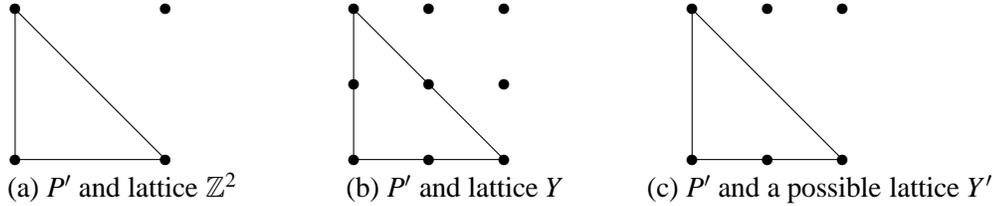
\begin{figure}[h]
\begin{center}
\begin{pspicture}(11,2)(0,-0.5)
\multiput(0,0)(0,2){2}{\multiput(0,0)(2,0){2}{\put(0,0){\circle*{.15}}}}
\pspolygon[linewidth=.4pt](0,0)(2,0)(0,2)
\put(-0.1,-0.5){(a) $P'$ and lattice $\Z^2$}
\multiput(4.5,0)(0,1){3}{\multiput(0,0)(1,0){3}{\put(0,0){\circle*{.15}}}}
\pspolygon[linewidth=.4pt](4.5,0)(6.5,0)(4.5,2)
\put(4.4,-0.5){(b) $P'$ and lattice $Y$}
\multiput(9,0)(0,2){2}{\multiput(0,0)(1,0){3}{\put(0,0){\circle*{.15}}}}
\pspolygon[linewidth=.4pt](9,0)(11,0)(9,2)
\put(8.4,-0.5){(c) $P'$ and a possible lattice $Y'$}
\end{pspicture}
\end{center}
\caption{Illustrations of $P'$ and different lattices $\Z^2$, $Y$, and $Y'$.}
\label{fig:v}
\end{figure}

We say that a set $Q$ is \emph{$Y$-free} if it contains no point of $Y$ in its interior, and a \emph{$Y$-split} is the convex hull of two parallel hyperplanes containing points in $Y$ that is $Y$-free.
As $P'$ is $Y$-free, one checks that $P+L$ is lattice-free.
Moreover, since $P'$ is not contained in the interior of any $Y$-split, one verifies that $P+L$ is not contained in any split.
Therefore $L$ satisfies the desired conditions and thus $P$ has infinite reverse split rank.

We now show that for every face $F$ of $P$, there is no nonzero rational vector $v \in \R^n$ such that conditions (i)--(ii) of Theorem~\ref{th:main} hold for $L=\ang v$.
We already observed in the introduction that $F$ must have dimension at least two, thus we only consider the case $F=P$.

Assume that $P+\ang v$ is relatively lattice-free.
If $v_3=v_4=0$, it is easy to check that $\relint(P+\ang v)$ is always contained in the interior of a split.
Therefore assume now that $(v_3,v_4)\neq (0,0)$ and, by scaling, that $v_3$ and $v_4$ are coprime integers.
Consider the lattice $Y'$ in $\R^2$ obtained as the projection of $\Z^4$ onto $\R^2 \times \{0\}^2$ by means of $v$.
More formally, a point $y \in \R^2$ is in $Y'$ if and only if there exists $\lambda \in \R$ such that $(y,0,0)+ \lambda v \in \Z^4$.
$Y'$ is the lattice generated by the vectors $(1,0)$, $(0,1)$, and $(v_1,v_2)$.
(See Fig.~\ref{fig:v}(c) for a drawing of $P'$ and a possible lattice $Y'$.)
Note that the lattice $Y'$ can contain at most one of the three points $(1/2,0)$, $(0,1/2)$, and $(1/2,1/2)$ (and in particular $Y'$ is different from the lattice $Y$).
Since $P+\ang v$ is relatively lattice-free, the polytope $P'$ is $Y'$-free.
Hence $P'$ is a $Y'$-free triangle with vertices in $Y'$ and at most one of the three middle points of its edges is in $Y'$.
This is well known to imply that $P'$ is contained in the interior of a $Y'$-split, which in turn shows that $\relint(P+\ang v)$ is contained in the interior of a split.

\begin{remark}
The previous example also shows that there exist 0/1 polytopes with infinite reverse split rank. (The example can be made full-dimensional, if one is interested in this further condition.) This contrasts with the fact that the split rank (and even the CG rank) of 0/1 polytopes in dimension $n$ is bounded by a function of $n$ (see \cite{balas,Eisc}).
\end{remark}




\subsection{On the necessity of considering faces}

In order to determine whether a polyhedron has infinite reverse split rank, all faces need to be considered in Theorem~\ref{th:main}, while this is not the case for the reverse CG rank ($F=P$ is the only interesting face in that case). We now show that this ``complication'' is necessary.

Let $P\subseteq\R^4$ be defined as the convex hull of points $(0,0,0,0)$, $(1,0,0,0)$, $(1,2,0,0)$, and $(1,0,2,0)$.
If $F$ is the face of $P$ induced by equation $x_1=1$, and  $L=\langle e_4\rangle$, then the conditions of the theorem are satisfied; thus $s^*(P)=+\infty$. However, the conditions are not fulfilled if we choose $F=P$ and the same $L$, as $\relint(P+L)$ is contained in the interior of the split $\{x\in\R^4: 0 \leq x_1 \leq 1\}$. Indeed one can verify that there is no subspace $L'$ such that the conditions are satisfied with $F=P$.

\subsubsection*{Acknowledgements} This work was supported by the {\em Progetto di Eccellenza 2008--2009} of {\em Fondazione Cassa di Risparmio di Padova e Rovigo}. Yuri Faenza was supported by the German Research Foundation (DFG) within the Priority Programme 1307 Algorithm Engineering.
The authors are grateful to two anonymous referees, whose detailed comments helped us to improve the paper.

\end{document}